\newcommand{\supp}{\mathop{\mathrm{supp}}}
\newcommand{\myspan}{\mathop{\mathrm{span}}}
\newcommand{\trunc}{\mathop{\mathrm{trunc}}}
\newcommand{\Trunc}{\mathop{\mathrm{Trunc}}}
\newcommand{\myvert}[1]{\!\left.\rule{0ex}{2ex}\right|_{#1}}
\newcommand{\osc}{\mathrm{osc}}
\let\hat\widehat
\definecolor{dgreen}{rgb}{0.5,0.2,1}
\definecolor{gold}{rgb}{.7,.5,0}
\definecolor{dred}{rgb}{0.92,0,0}
\definecolor{dgreen}{rgb}{0,0.6,0}
\def\B {\color{black}}
\def\Rd{\color{black}}
\def\Gd{\color{black}}
\newcommand{\VV}{\mathbb{V}}
\renewcommand{\SS}{\mathbb{S}}
\newcommand{\cB}{\mathcal{B}}
\newcommand{\cG}{\mathcal{G}}
\newcommand{\cM}{\mathcal{M}}
\newcommand{\cN}{\mathcal{N}}
\newcommand{\cQ}{\mathcal{Q}}
\newcommand{\bx}{\mathbf{x}}
\let\les\lesssim
\newcommand{\bn}{{\bf n}}
\newcommand{\bF}{{\bf F}}
\newcommand{\bxi}{\boldsymbol{\xi}}
\def\bx{{\bf x}}
\renewcommand{\div}{\textrm{div}}
\newcommand{\bA}{\textbf{A}} 
\newtheorem{thm}{Theorem}
\newtheorem{dfn}[thm]{Definition}
\newtheorem{rmk}[thm]{Remark}
\newtheorem{lma}[thm]{Lemma}
\newtheorem{crl}[thm]{Corollary}
\newtheorem{prn}[thm]{Proposition}
\newtheorem{exm}[thm]{Example}
\newenvironment{proofof}[1]{\begin{trivlist}
                      \item[]\hspace{0cm}{\bf Proof~of~{#1}.}}
                      {\hfill $\qed$
                    \end{trivlist}}
\def\M3AS{Math.\ Models\ Methods\ Appl.\ Sci.}
\begin{document}

\markboth{A.~Buffa \& C.~Giannelli}{Adaptive isogeometric methods with hierarchical splines: error estimator and convergence}

%
%

\title{\normalsize \bf ADAPTIVE ISOGEOMETRIC METHODS WITH HIERARCHICAL SPLINES:\\ ERROR ESTIMATOR AND CONVERGENCE
}

\author{\footnotesize ANNALISA BUFFA\footnote{Istituto di Matematica Applicata e Tecnologie Informatiche `E.~Magenes' del CNR,  via Ferrata 1, 27100 Pavia Italy. E-mail address: annalisa@imati.cnr.it} \, and CARLOTTA GIANNELLI\footnote{Istituto Nazionale di Alta Matematica, 
Unit\`a di Ricerca di Firenze c/o DiMaI `U.~Dini', Universit\`a di Firenze, viale Morgagni 67a, 50134 Firenze, Italy. E-mail address:  carlotta.giannelli@unifi.it}} 




\date{}

\maketitle


\begin{abstract}\noindent
The problem of developing an adaptive isogeometric method (AIGM) for solving elliptic second-order partial differential equations with truncated hierarchical B-splines of arbitrary degree and different order of continuity is addressed. The adaptivity analysis holds in any space dimensions.
We consider a simple residual-type error estimator for which we provide a posteriori upper and lower bound in terms of local error indicators, taking also into account the critical role of oscillations as in a standard adaptive finite element setting. The error estimates are properly combined with a simple marking strategy to define a sequence of  admissible locally refined meshes and corresponding approximate solutions. The design of a refine module that preserves the admissibility of the hierarchical mesh configuration between two consectutive steps of the adaptive loop is presented. The contraction property of the quasi-error, given by the sum of the energy error and the scaled error estimator, leads to the convergence proof of the AIGM.
\end{abstract}



\section{Introduction}
\label{sec:aloop}

The definition of adaptive schemes  that provide local mesh refinement is an active area of research in the context of isogeometric analysis \cite{cottrell2009,hughes2005}, an emerging paradigm for the solution of partial differential equations which combines and extends finite 
element techniques with computer aided design (CAD) methods related to spline models. Since the CAD standard for spline representation in a multivariate setting relies on tensor-product B-splines, e.g.~see \cite{deboor2001,schumaker2007}, an adaptive isogeometric model necessarily requires suitable extensions of the B-spline model that  give the possibility to relax \B the rigidity of the tensor-product structure by allowing hanging nodes.

There are a few different frameworks for the definition of splines on rectangular tiling with hanging nodes. We mention here  T-splines \cite{sederberg2004,sederberg2003} that have been used in the context of isogeometric analysis in the pioneering papers  
\cite{bazilevs2010,doerfel2010},
and their  analysis-suitable \cite{scott2011b} or dual-compatible 
\cite{daveiga2012,daveiga2013} versions.  Other possibilities are offered by  polynomial  splines over  ({\Rd hierarchical}) T-meshes
\cite{deng2006,deng2008} or LR-splines \cite{dokken2013,bressan2013}, that have been tested within an isogeometric framework in  \cite{nguyen-thanh2011} and \cite{johannessen2014}, respectively.  

Finally, hierarchical splines based on the construction presented in \cite{kraft1997} is one of the most promising approach.  This is also due to the fact that their construction and properties are closely related to the ones of hierarchical finite elements.  Hierarchical B-spline constructions and their use, both as an adaptive modeling tool, as well as a framework for isogeometric analysis that provides local refinement possibilities, has been recently investigated in a number of papers, see e.g.~\cite{vuong2011,giannelli2012,giannelli2014,kiss2014b}.

In the present paper we aim at defining and studying an \emph{adaptive isogeometric method (AIGM) based on hierarchical splines}.  The choice,   among the adaptive spline models mentioned above, of the hierarchical setting have a twofold motivation. On the one hand, it is a natural extension of the B-spline model that is able to preserve many key properties directly by construction, and the refinement rules are simple and straightforward. In addition, \B although the type of refinement they allow is more restrictive than other solutions\B, the locally structured hierarchical approach allows to defines an effective automatically-driven refinement strategy that, in turns, can be used to design a fully adaptive method.

We consider the simple elliptic model problem:
\begin{equation}
\label{eq:mp}
-\div (\bA \nabla u) = f \quad\text{in}\; \Omega, 
\qquad 
u\myvert{\partial\Omega} = 0,
\end{equation}
where $\Omega\subset\mathbb{R}^{d}$, $d\ge 1$, is a bounded domain with Lipschitz boundary $\partial\Omega$, and $f$ is any square integrable function and 
\begin{equation}
  \label{eq:5}
  \forall \bx\in \Omega,\ \bxi\in \mathbb{R}^d  \;  
  \eta_1 |\bxi|^2 \leq \bA(x)\bxi\cdot\bxi \; \text{ and } \; |\bA(\bx)\bxi|\leq \eta_2 |\bxi|
 \end{equation}
with $0 < \eta_1\leq \eta_2$. 

By closely following the framework of adaptive finite elements --- see e.g., the recent reviews in \cite{nsv2009,NochettoCIME} and references therein --- for elliptic partial differential equations, we aim at designing and analyse  the four  blocks in the following flowchart  associated to an AIGM. 

\begin{center}
\fbox{SOLVE} $\rightarrow$ \fbox{ESTIMATE} $\rightarrow$ \fbox{MARK} $\rightarrow$ \fbox{REFINE}
\end{center}

At our best knowledge, all  previous works on error estimators in isogeometric analysis were mainly devoted to numerical experiments with some goal--oriented error estimators based on auxiliary global refinement steps \cite{zv2011,ds2012,kvvb2014}.

Our choices for the different steps of the adaptive loop may be detailed as follows.

\begin{itemize}

\item[\fbox{SOLVE}] We want  to solve  problem \eqref{eq:mp}  with hierarchical spline spaces. To this aim, we define a family of \emph{admissible} hierarchical meshes, which uses the concept of truncated basis \cite{giannelli2012}, and we consider the Galerkin method on these spaces. Admissibility is related to the number of levels which are present (with non zero basis functions) on an element, and it is a fundamental assumption in our theory.  

\item[\fbox{ESTIMATE}] We define residual based error estimator for our problem. Thanks to the regularity of splines, such an estimator  reduces to the $L^2$-norm of the element-by-element residual suitably weighted with the mesh size. We prove that this estimator is \emph{reliable}, i.e., it is an upper bound for the error, and \emph{efficient}, i.e., it is a lower bound of the error (up to oscillations).

\item[\fbox{MARK}] We adopt the D\"orfler marking strategy 
\cite{dorfler1996},  namely we mark for refinement all elements with largest error indicator until a certain fixed percentage of the total error indicator is taken into account  by the set of marked elements. 

\item[\fbox{REFINE}] A refinement procedure constructs the refined mesh starting from the set of marked elements,  by following the structure of the recursive refine module generally considered in adaptive finite elements, see e.g.~\cite{morin2001,morin2002}.  We construct this routine so that the admissibility of the refined mesh is  preserved between two consecutive iterations of the adaptive loop. 
\end{itemize}
In general, the refinement procedure identifies the mesh with an increased level of resolution for the next iteration by refinining not only the marked elements, but also a suitable set of elements in their neighbourhood,  analogously to the concept of \emph{refinement patches} in an adaptive finite element method. This allows to construct a mesh that preserves a certain class of admissibility. The refinement mechanism is similar to the strategy adopted to bound the number of hanging nodes per side in the refinement of quadrilateral meshes for finite elements \cite{bonito2010}, and  is also related  to the properties of the domain partitions created by the bisection rule that are needed to prove quasi-optimality of adaptive finite element methods 
\cite{binev2004a,cascon2008,stevenson2007,stevenson2008}. 

In the present paper we start the numerical analysis of our AIGM method and we provide a convergence result together with the contraction of the quasi-error (i.e., the sum of the error and the error indicator), while the complexity of the refine routine, together with quasi-interpolation operators and optimality of the AIGM, is left to the companion paper \cite{buffa2015b}. 


The paper is organized as follows. Some preliminary aspects of hierarchial tensor-product B-spline constructions are reviewed in Section~\ref{sec:hspaces} together with the definition of {\Rd truncated hierarchical B-splines (THB-splines)}  and related properties, before introducing the notion of  (strictly) admissible meshes.
The module SOLVE and {\Rd ESTIMATE} of the adaptive isogeometric method are discussed in Sections~\ref{sec:solve} and \ref{sec:estimate} including {\Rd an a} posteriori error analysis in terms of both upper and lower bound for the energy error. Section~\ref{sec:mark&refine} recalls a well-known marking strategy and introduces a refinement strategy that preserves the class of admissibility during the iterative loop --- module MARK and REFINE. Finally, Section~\ref{sec:closure} concludes the paper by summarizing the key results of the present study, and outlines the spirit of our companion paper \cite{buffa2015b}. 
\section{Hierarchical spline spaces}
\label{sec:hspaces}
We start by considering the hierarchical approach to adaptive mesh refinement, as natural  extension of the standard tensor-product B-spline model in a general multivariate setting. In particular, we focus on the truncated hierachical B-spline basis, since it allows us to identify a certain class of admissible mesh configurations.
\subsection{Preliminaries: B-spline hierarchies}
\label{sec:pre}
Hierarchical B-spline spaces are constructed by considering a hierarchy of $N$ tensor-product $d$-variate spline spaces $V^0\subset V^1\subset \ldots...\subset V^{N-1}$ defined on a {\Rd closed hyper-rectangle} $D$ in $\mathbb{R}^d$ together with a hierarchy of domains $\hat{\Omega}^0\supseteq\hat{\Omega}^1\supseteq\ldots\supseteq\hat{\Omega}^{N-1}$, that are {\Rd closed} subsets of $D$. The \emph{depth} of the subdomain hierarchy is represented by the integer $N$, and we assume $\hat{\Omega}^N=\emptyset$.

For each level $\ell$, with $\ell=0,1,\dots,N-1$, the multivariate spline space $V^\ell$ is spanned by the tensor-product B-spline basis $\hat{{\cal B}}^\ell$ of degree $\mathbf{p}=(p_1,\ldots,p_d)$ defined on {\Rd a given tensor-product} grid $\hat{G}^\ell$. The (non-empty) quadrilateral elements (or cells) $\hat{Q}$ of $\hat{G}^\ell$ are the Cartesian product of $d$ open intervals between adjacent grid values. For any coordinate direction $i$, for $i=1,\ldots,d$ the knot sequences associated to the grids at the different levels contain non-decreasing real numbers so that each grid value appears in the knot vector as many times as specified by a certain multiplicity. At any level $\ell$, i.e., for the case of standard tensor-product B-splines, the multiplicity of each knot may vary between one (single knots) and $p_i$ or $p_{i+1}$ for the case of continuous and discontinuos functions, respectively. In order to guarantee the nested nature of the spline spaces $V^\ell\subset V^{\ell+1}$, we require that every knot of level $\ell-1$ is also present at level $\ell$ at least with the same multiplicity in the corresponding coordinate direction.
 
From the classical spline theory, it is known that B-splines are locally linear independent, they are non-negative, they have local support, and form a partition of unity \cite{deboor2001,schumaker2007}. Moreover, there exists a two-scale relation between adjacent bases in the hierarchy so that any function $s\in V^\ell\subset V^{\ell+1}$ can be expressed as
\begin{equation}\label{eq:2scale}
s=\sum_{\hat{\beta}\in \hat{{\cal B}}^{\ell+1}} c_{\hat{\beta}}^{\ell+1}(s) \hat{\beta},
\end{equation}
in terms of {\Rd the} coefficients $c_{\hat{\beta}}^{\ell+1}$. 

{\Rd The domain ${\Omega}^\ell$ is defined as the union of the closure of elements of ${\hat G}^{\ell-1}$, namely}
\[
{\Rd {\hat{\Omega}}}^\ell=
\bigcup\left\{
\overline{\hat{Q}} {\Rd \,: \hat{Q}\in \hat{G}^{\ell-1}}
\right\}.
\]
{\Rd An element $\hat{Q}$ of level $\ell$ is \emph{active} if $\hat{Q}\subset\hat{\Omega}^\ell$ and any $\hat{Q}^*$ of level $\ell^*>\ell$ which belongs to any $\hat{\Omega}^{\ell+1}, \ldots,\hat{\Omega}^{N-1}$ is not a subset of $\hat{Q}$.} We denote the collection of active elements of level $\ell$ as
\begin{equation}\label{eq:active}
\hat{{\cal G}}^\ell {\Rd\, :=\,} 
\left\{\hat{Q}\in \hat{G}^\ell : \hat{Q} {\Rd \,\subset\,} \hat{\Omega}^\ell \wedge
\nexists\; \hat{Q}^* {\Rd \in {\hat G}^{\ell^*},\, \ell^*>\ell : Q^*} \subset{\hat{\Omega}}^{\ell^*} {\Rd \,\wedge\,} \hat{Q}^* \subset \hat{Q}\right\}.
\end{equation}
Let {\Rd $\hat{\cal Q}$} be the
 \emph{mesh} composed by taking the active elements $Q$ at any hierarchical level, namely
\begin{equation}\label{eq:mesh}
\hat{{\cal Q}} {\Rd\,:=\,} \left\{
\hat{Q}\in \hat{{\cal G}}^\ell, \, {\Rd \ell } =0,\ldots,N-1
\right\}.
\end{equation}
For any $\hat{Q}\in\hat{{\cal Q}}$, we define $h_{\hat{Q}} {\Rd\, :=\,} |\hat{Q}|^{1/d}$.
A mesh $\hat{{\cal Q}}^*$ is a refinement of $\hat{{\cal Q}}$ if each element $\hat{Q}^*\in\hat{{\cal Q}}^*$ either also belongs to $\hat{{\cal Q}}$ or is obtained by splitting $\hat{Q}\in\hat{{\cal Q}}$ in $q^d$ elements via ``$q$-adic'' refinement, for some integer $q\ge 2$. The refinement relation between $\hat{\cQ}$ and $\hat{\cQ}^*$ will be indicated as $\hat{\cQ}^*\succeq\hat{\cQ}$.
{
In particular, we will consider the case of standard dyadic refinement with $q=2$.}
 
A basis for the hierarchical B-spline space can be constructed by a suitable selection of \emph{active} basis functions at different level of details according to the following definition, see also \cite{kraft1997,vuong2011}.
\begin{dfn}\label{dfn:hb}
The hierarchical {\Rd B-spline} (HB-spline) basis $\hat{{\cal H}}$ with respect to the mesh $\hat{\cal Q}$ is defined as
\begin{equation*}
\hat{{\cal H}}(\hat{{\cal Q}}) {\Rd\,:=\,} 
\left\{
\hat{\beta}\in\hat{{\cal B}}^\ell : 
\supp \hat{\beta} \subseteq\hat{\Omega}^\ell \wedge 
\supp \hat{\beta}\not\subseteq \hat{\Omega}^{\ell+1},
\, {\Rd \ell} =0,\ldots,N-1
\right\},
\end{equation*}
where $\supp \hat{\beta}$ denotes the intersection of the support of $\beta$ with $\hat{\Omega}^0$.
\end{dfn}

\begin{rmk}\label{rmk:pkref}
Note that the hierarchical approach is not confined to dyadic or $q$-adic (uniform) refinement, but it can also handle different kind of mesh refinements,
 including non-uniform configurations. 
In addition, by assuming that the degrees may increase (but not decrease) moving from one level to the subsequent in the hierarchy, nested sequence of tensor-product spline spaces can be also considered in the context of $p$- (and $k$-) refinement.
\end{rmk}
\subsection{The truncated basis}
\label{sec:thb}
We define the truncation of a function $\hat{s} \in V^\ell$ with respect to $\hat{\cal B}^{\ell+1}$ as the contributions in \eqref{eq:2scale} of only basis functions in $\hat{\cal B}^{\ell+1}$ that are \emph{passive}, i.e., not included in the hierarchical B-spline basis $\hat{{\cal H}}(\hat{\cQ})$. More precisely,
\begin{equation}\label{eq:trunc}
{\trunc}^{\ell+1} \hat{s} {\Rd\,:=\,} \sum_{\hat{\beta}\in \hat{\cB}^{\ell+1}, \, \supp\hat{\beta}\not\subseteq\hat{\Omega}^{\ell+1}} c_{\hat{\beta}}^{\ell+1}(s) \hat{\beta},
\end{equation}
where $c_{\hat{\beta}}^{\ell+1}(s)$ is the coefficient of the function $s$ with respect to the basis element $\hat{\beta}$ 
at level $\ell+1$ of the B-spline refinement rule \eqref{eq:2scale}. By recursively applying the truncation to the HB-splines introduced in Definition~\ref{dfn:hb}, we can construct a different hierarchical basis 
\cite{giannelli2012}.

\begin{dfn}\label{dfn:thb}
The truncated hierarchical {\Rd B-spline} (THB-spline) basis $\hat{{\cal T}}$ with respect to the mesh $\hat{{\cal Q}}$ is defined as
\begin{equation*}
\hat{{\cal T}}(\hat{{\cal Q}}) {\Rd\,:=\,} 
\left\{
{\Rd {\Trunc}^{\ell+1}}\,\hat{\beta}:\hat{\beta}\in\hat{{\cal B}}^\ell
\cap\hat{{\cal H}}(\hat{{\cal Q}}),\,  {\Rd \ell} =0,\ldots,N-1\right\}, 
\end{equation*}
where $ {\Rd {\Trunc}^{\ell+1}}\,\hat{\beta} {\Rd\,:=\,} {\trunc}^{N-1}({\trunc}^{N-2}(\ldots ({\trunc}^{\ell+1}(\hat{\beta}))\dots))$, for any $\hat{\beta}\in\hat{{\cal B}}^\ell\cap\hat{{\cal H}}(\hat{{\cal Q}})$.
\end{dfn}

The \emph{level} of a truncated B-spline $\hat{\tau}\in\hat{{\cal T}}(\hat{{\cal Q}})$ is the level of the B-spline from which $\hat{\tau}$ is derived according to the iterative truncation mechanism introduced in Definition~\ref{dfn:thb}.
For simplicity, we will denote $\hat{{\cal H}}=\hat{{\cal H}}(\hat{{\cal Q}})$, $\hat{{\cal T}} = \hat{{\cal T}}(\hat{{\cal Q}})$ when there will be no ambiguity in the text. 
\subsection{Properties of THB-splines}
\label{sec:pro}
The truncated basis $\hat{{\cal T}}$ not only spans the same hierarchical space of classical HB-splines, namely 
\begin{itemize}
\item[(i)] $\myspan{\hat{\cal T}}=\myspan{\hat{\cal H}}$, 
\end{itemize}
but it also inherits from the hierarchical B-spline basis $\hat{{\cal H}}$ the following properties:
\begin{itemize}
\item[(ii)] non-negativity: 
$\hat{\tau}\ge 0, \,\forall\,\hat{\tau}\in\hat{{\cal T}}$;
\item[(iii)] linear independence: 
$\sum_{\hat{\tau}\in\hat{{\cal T}}}c_{\hat{\tau}} \hat{\tau} = 0 \Leftrightarrow c_{\hat{\tau}} = 0$, $\forall\,\hat{\tau}\in\hat{{\cal T}}$;
\item[(iv)] nested nature of the {\Rd hierarchical} spline spaces {\Rd for consecutive levels;}
\item[(v)] the span of a THB-spline basis defined over a sequence of subdomains is contained in the span of a truncated basis defined over a second sequence that is the nested enlargment of the original subdomain hierarchy;
\item[(vi)] completeness of the basis: for a certain class of admissible configurations of the hierarchical mesh, ${\myspan \hat{{\cal T}}}$ contains all piecewise polynomial functions defined over the underlying grid.
\end{itemize}
In addition, the truncation mechanism enriches the THB-spline basis functions so that
\begin{itemize}
\item[(vii)] they preserve the coefficients of the underlying sequence of B-splines;
\item[(iix)] they form a partition of {\Rd unity}; 
\item[(ix)] they are strongly stable with respect to the supremum norm, under reasonable assumptions on the given knot configuration.\footnote{Strong stability of a basis means that the associated stability constants do not depend on the number of hierarchical levels.}
\end{itemize}
Due the two-scale relation \eqref{eq:2scale} between adjacent (non-negative) B-spline bases, the non-negativity of truncated basis functions (ii) is preserved by construction. Properties (i), (iii)-(v), and (vii)-(ix) are detailed in \cite{giannelli2012,giannelli2014}. For the analysis of hierarchical spline space in (vi), we refer to \cite{giannelli2013} for the bivariate case with single knots, and to \cite{mokris2014a} for the general multivariate setting with arbitrary knot multiplicities.  As a consequence of property (vii), quasi-interpolants in hierarchical spline spaces can be easily constructed \cite{sm2015}. 
 Properties (ii) and (iix) imply the convex hull property, a key attribute for geometric modeling applications. 
\subsection{Admissible meshes}
\label{sec:ameshes}

The truncation mechanism that characterizes the THB-spline basis can be properly exploited to design suitable refinement strategies that define different \emph{classes of admissible meshes}. A mesh of this kind allows to guarantee that the number of basis functions acting on any mesh point is bounded. In addition, the support of any basis function acting on a single element of an admissible mesh can be compared with the size of the element itself in terms of two constants that do not depend on the {\Rd overall} number of hierarchical levels. These two properties are the key ingredients for the subsequent analysis --- see e.g., Theorem~\ref{eq:ub} related to the a posteriori upper bound, and the error indicator reduction provided by Lemma~\ref{lma:erred}. We postpone the presentation of the refinement procedure to Section~\ref{sec:mark&refine}, by simply focusing here on the desired mesh configuration.

\begin{dfn}\label{dfn:amesh} 
A mesh $\hat{{\cal Q}}$ is admissible of class $m$ if the truncated basis functions in $\hat{{\cal T}}(\hat{{\cal Q}})$ which take non-zero values over any element $\hat{Q}\in\hat{{\cal Q}}$ belong to at most $m$ {\Rd successive} levels.
\end{dfn}

For this class of admissible meshes, the number of basis functions acting on a single mesh element does not depend on the number of {\Rd levels in the hierarchy} but only on $m$, that represents the class of admissibility of the mesh.

\begin{crl}\label{crl:thbcor1} 
For multivariate tensor-product B-splines of degree {\Rd $\mathbf{p}=(p_1,$ $\ldots,$ $p_d)$}, the number of truncated basis functions which are non-zero on each element of an admissible mesh is then at most $m\prod_{i=1}^d(p_i+1)$.
\end{crl}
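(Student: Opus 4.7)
The plan is to exploit the admissibility hypothesis together with the standard local-support property of tensor-product B-splines. First, I would fix an arbitrary $\hat{Q}\in\hat{\cQ}$ and consider the set of truncated basis functions $\hat{\tau}\in\hat{{\cal T}}(\hat{\cQ})$ whose restriction to $\hat{Q}$ is non-zero. By Definition~\ref{dfn:amesh}, all such functions have their level in a window of at most $m$ consecutive values, say $\ell_0,\ell_0+1,\dots,\ell_0+m-1$. It therefore suffices to bound, level by level, the number of level-$\ell$ truncated B-splines active on $\hat{Q}$, and then sum over the (at most) $m$ admissible levels.

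Next I would observe that truncation can only shrink supports. Indeed, from \eqref{eq:trunc} the operator $\trunc^{\ell+1}$ is obtained by zeroing out some of the (non-negative) coefficients in the two-scale expansion \eqref{eq:2scale}, so $\supp(\trunc^{\ell+1}\hat\beta)\subseteq\supp\hat\beta$; iterating yields $\supp(\Trunc^{\ell+1}\hat\beta)\subseteq\supp\hat\beta$ for every $\hat\beta\in\hat{\cB}^\ell\cap\hat{\cH}(\hat\cQ)$. Hence if $\hat\tau=\Trunc^{\ell+1}\hat\beta$ does not vanish on $\hat Q$, then the underlying tensor-product B-spline $\hat\beta\in\hat{\cB}^\ell$ must also have support intersecting $\hat Q$. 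Because $\hat Q$ belongs to $\hat{\cG}^{\ell'}$ for some $\ell'\ge\ell$ (the admissibility window starts no later than the level of $\hat Q$) and the grids $\hat G^\ell$ are nested, $\hat Q$ is contained in a unique cell of the coarser grid $\hat G^\ell$. The standard local support property of univariate B-splines of degree $p_i$ implies that at most $p_i+1$ of them are non-zero on any given cell of $\hat G^\ell$ in the $i$-th coordinate direction; by tensor-product, at most $\prod_{i=1}^d(p_i+1)$ functions of $\hat{\cB}^\ell$ are non-zero on a cell of $\hat G^\ell$, hence the same bound holds on $\hat Q$.

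Combining the two steps and summing over the (at most) $m$ admissible levels yields the claimed bound $m\prod_{i=1}^d(p_i+1)$. There is no serious obstacle: the only point to double-check is the support inclusion under truncation, which is immediate from the definition, and the elementary observation that active elements are nested inside coarser tensor-product cells, so the per-level local support count transfers verbatim from $\hat G^\ell$ to the finer active cell $\hat Q$.
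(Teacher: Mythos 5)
Your argument is correct and coincides with the (implicit) reasoning behind the paper's statement, which is given without proof: Definition~\ref{dfn:amesh} confines the active truncated functions on an element to at most $m$ successive levels, truncation only shrinks supports so the per-level count reduces to counting tensor-product B-splines on a single cell of the coarser grid containing $\hat{Q}$, and the standard local-support property gives $\prod_{i=1}^d(p_i+1)$ per level. The only detail you assert without full justification --- that the relevant levels are all $\le$ the level of $\hat{Q}$, so that $\hat{Q}$ sits inside one cell of each $\hat{G}^\ell$ --- is indeed true (a THB-spline of level $\ell$ has support in $\hat{\Omega}^\ell$, which misses the interior of any active element of level $<\ell$), so no gap remains.
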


Another important fact that holds for admissible meshes is the following.

\begin{crl}\label{crl:thbcor2} If $\hat{\cQ}$ is an admissible mesh of class $m$, given a truncated basis function $\hat{\tau}\in \hat{{\cal T}}(\hat{{\cal Q}})$, 
\begin{equation}
  \label{eq:supp-basis}
 |\hat{Q}| \lesssim  | \supp \hat{\tau} | \lesssim |\hat{Q}| \qquad \forall \hat{Q}\in\hat{{\cal Q}}  \ :\ \hat{Q}\cap\supp \hat{\tau} \neq \emptyset,
\end{equation}
where the hidden constants in the above inequalities depend on $m$ but not on $\hat{\tau}$, neither on $\hat{\cQ}$ or $N$.
\end{crl}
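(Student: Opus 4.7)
The plan is to control $|\supp\hat\tau|$ and $|\hat Q|$ by comparing the levels $\ell$ of $\hat\tau$ and $\ell^*$ of $\hat Q$, and then using the (essentially uniform) scaling of both quantities with their respective levels. Let $\hat\tau$ arise by iterated truncation from a tensor-product B-spline $\hat\beta\in\hat{\cal B}^\ell\cap\hat{\cal H}(\hat{\cal Q})$, so $\hat\tau$ has level $\ell$ in the sense of Definition~\ref{dfn:thb}. Denote by $h_\ell$ the characteristic size of a cell of $\hat G^\ell$; for dyadic refinement $h_\ell=2^{-\ell}h_0$ (the $q$-adic case is identical up to the constant). Standard properties of tensor-product B-splines immediately give $|\supp\hat\beta|\le C_1(\mathbf{p})\,h_\ell^d$, and since the truncation operator in \eqref{eq:trunc} only removes summands, $\supp\hat\tau\subseteq\supp\hat\beta$ and thus $|\supp\hat\tau|\le C_1(\mathbf p)\,h_\ell^d$.

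The crucial step is to prove that any $\hat Q\in\hat{\cal Q}$ intersecting $\supp\hat\tau$ satisfies $\ell\le\ell^*\le\ell+m-1$. For the lower bound, Definition~\ref{dfn:hb} forces $\supp\hat\beta\subseteq\hat\Omega^\ell$, hence $\supp\hat\tau\subseteq\hat\Omega^\ell$; on the other hand an active cell of level $\ell^*$ is not contained in $\hat\Omega^{\ell^*+1}$. Since the subdomains are nested, if we had $\ell^*<\ell$, the active cell $\hat Q$ would lie in $\hat\Omega^{\ell^*+1}\supseteq\hat\Omega^\ell$ as soon as its interior touches $\supp\hat\tau$, a contradiction. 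For the upper bound, admissibility (Definition~\ref{dfn:amesh}) states that the truncated basis functions nonzero on $\hat Q$ span at most $m$ consecutive levels; since $\hat\tau$ is one of them (or, in the degenerate boundary-contact case, is nonzero on an adjacent active cell of comparable level, see below), the level $\ell$ of $\hat\tau$ must lie in $\{\ell^*-m+1,\dots,\ell^*\}$.

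Combining these ingredients gives the right inequality of~\eqref{eq:supp-basis}:
\[
|\supp\hat\tau|\le C_1(\mathbf p)\,h_\ell^d = C_1(\mathbf p)\,2^{d(\ell^*-\ell)}\,h_{\ell^*}^d\le C_1(\mathbf p)\,2^{d(m-1)}\,|\hat Q|,
\]
with constants depending only on $\mathbf p$, $d$, and $m$. For the left inequality, observe that $\hat\tau$ is a polynomial on every active cell (active cells are never subdivided by hypothesis). Hence either $\hat\tau\equiv0$ on $\hat Q$, or $\hat\tau$ does not vanish on any open subset of $\hat Q$, giving $\overline{\hat Q}\subseteq\supp\hat\tau$ and therefore $|\hat Q|\le|\supp\hat\tau|$. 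In the degenerate case where $\hat\tau\equiv0$ on the interior of $\hat Q$ but the closures touch, one replaces $\hat Q$ by a neighboring active cell $\hat Q'$ on which $\hat\tau$ is nonzero; applying the level-comparison step to $\hat Q'$ yields $|\hat Q|\asymp|\hat Q'|\le|\supp\hat\tau|$ with a constant depending on $m$ alone.

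The main obstacle is precisely the level-comparison step, which is not a direct computation but requires carefully matching Definitions~\ref{dfn:hb}, \ref{dfn:thb}, and~\ref{dfn:amesh} with the notion of active element and the nestedness of $\{\hat\Omega^\ell\}$. Once this is in place, the remaining estimates are routine consequences of the dyadic (or $q$-adic) scaling of the grids and of the fact that a tensor-product B-spline has support of size comparable, through a degree-dependent constant, to a single cell at its own level.
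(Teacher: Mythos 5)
The paper states Corollary~\ref{crl:thbcor2} without proof, so there is no official argument to compare against; your strategy --- reduce everything to a comparison of the level $\ell$ of $\hat\tau$ with the level $\ell^*$ of $\hat Q$, then invoke the $q$-adic scaling of cell sizes and the fact that $|\supp\hat\beta|$ is comparable to $h_\ell^d$ through a degree-dependent constant --- is the natural one and is clearly what the authors intend. Your lower bound $\ell^*\ge\ell$ is correct (an active cell of level $\ell^*$ is disjoint from $\hat\Omega^{\ell^*+1}$, while $\supp\hat\tau\subseteq\supp\hat\beta\subseteq\hat\Omega^\ell$), and the left inequality of \eqref{eq:supp-basis} is fine as well; in fact your ``degenerate boundary-contact case'' is vacuous, because an open cell on which $\hat\tau\equiv 0$ cannot meet the closed set $\supp\hat\tau$ at all, so the hypothesis $\hat Q\cap\supp\hat\tau\neq\emptyset$ already forces $\hat\tau\not\equiv 0$ on $\hat Q$.

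The gap is in the step you yourself single out as crucial, namely $\ell^*\le\ell+m-1$. From Definition~\ref{dfn:amesh} you only know that the levels of the THB-splines which are nonzero on $\hat Q$ lie in a window of $m$ successive levels, and that $\ell$ belongs to this window; to conclude $\ell\ge\ell^*-m+1$ you need the window to reach up to $\ell^*$, i.e., you need some THB-spline of level exactly $\ell^*$ to be nonzero on $\hat Q$. That does not follow from the literal wording of Definition~\ref{dfn:amesh}: one can build hierarchies (for instance, degree $2$ in one variable with every $\hat\Omega^j$, $j\ge 1$, only two cells of level $j$ wide, so that no level-$j$ B-spline has support inside $\hat\Omega^j$) in which an active cell of level $\ell^*$ sees only level-$0$ functions; the ``at most $m$ successive levels'' condition then holds trivially while $|\supp\hat\tau|/|\hat Q|$ grows like $2^{d\ell^*}$, contradicting \eqref{eq:supp-basis}. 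The statement really relies on the anchored form of admissibility that the paper establishes for \emph{strictly} admissible meshes: the proof of Proposition~\ref{prop:adm} shows that THB-splines of level $\le\ell^*-m$ vanish identically on $\hat\Omega^{\ell^*}\supseteq\hat Q$, which yields $\ell\ge\ell^*-m+1$ directly, with no need to exhibit a function of level $\ell^*$ on $\hat Q$. You should either restrict to strictly admissible meshes and quote that argument, or state explicitly the (evidently intended) reading of Definition~\ref{dfn:amesh} in which the admissible window for $\hat Q\in\hat{\cal G}^{\ell^*}$ is $\{\ell^*-m+1,\dots,\ell^*\}$. With that repaired, the remaining scaling computation is correct.
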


In what follows, we will always indicate any inequality which does not depend on the depth $N$ of the spline hierarchy with $\lesssim$. 

Since the interplay between the truncated basis funcions that are non-zero on a certain mesh element and the overall mesh configuration is strictly related to the \emph{locality} of the basis functions, we naturally focus on the \emph{support extension} of an element $\hat{Q}\in\hat{\cG}^\ell$ .
For any fixed level $\ell$, the support extension collects the elements intersected by the set of B-splines in $\hat{{\cal B}}^\ell$ whose support overlaps $\hat{Q}$.
We extend this definition to the hierarchical setting as follow. 
\begin{dfn}\label{dfn:hse}
The support extension $S(\hat{Q},k) $ of an element {\Rd $\hat{Q}\in\hat{G}^\ell$} with respect to level $k$, with $0\le k\le \ell$,
is defined as
\[
S(\hat{Q},k) {\Rd\,:=\,} \left\{
\hat{Q}'\in \hat{G}^k: 
{\Gd \exists\, \hat{\beta}\in \hat{\cB}^k,\,}
\supp\hat{\beta}\cap \hat{Q}'\ne\emptyset \wedge \supp\hat{\beta}\cap \hat{Q}\ne\emptyset
\right\}.
\]
\end{dfn}
{\Rd By a slight abuse of notation, we will also denote by $S(\hat{Q},k) $ the region occupied by the closure of elements in $S(\hat{Q},k)$.}
In order to identify a specific set of admissible meshes, we also consider the auxiliary subdomains
\[
{\Rd {\hat{\omega}}^{\ell} \,:=\,} \bigcup\left\{
\overline{\hat{Q}} {\Rd \,:\, \hat{Q}} 
\in \hat{G}^{\ell} {\Rd \,\wedge\,}
S(\hat{Q},{\ell})\subseteq \hat{\Omega}^{\ell}
\right\},
\]
for $\ell=0,\dots,N-1$. Any $\hat{\omega}^\ell$ represent the biggest subset of $\hat{\Omega}^\ell$ so that the set of B-splines in $\hat{\cal B}^\ell$ whose support is contained in $\hat{\Omega}^\ell$ spans the restriction of $V^\ell$ to $\hat{\omega}^\ell$.

\begin{exm}\label{exm:01}
A set of admissible meshes of class $m=2$ corresponds to the restricted hierarchies presented in Appendix A of \cite{giannelli2014} and relies on the following result. If $\hat{\Omega}^{\ell} \subseteq \hat{\omega}^{\ell-1}$ for $\ell=1,\ldots,N-1$, then for any element $\hat{Q}\in {\Rd \hat{{\cal G}}^{\ell}}$ the THB-splines whose support overlaps $\hat{Q}$ belong to at most two different levels: $\ell-1$ and $\ell$. Figure~\ref{fig:exm01} shows three examples of this class of admissible meshes related to the bivariate case of degree $(p_1,p_2)=(p,p)$ for $p=2,3,4$.

\begin{figure}[ht!]\begin{center}\hspace*{-.75cm}
\subfigure[$(p_1,p_2)=(2,2)$ ]{
\includegraphics[scale=0.25]{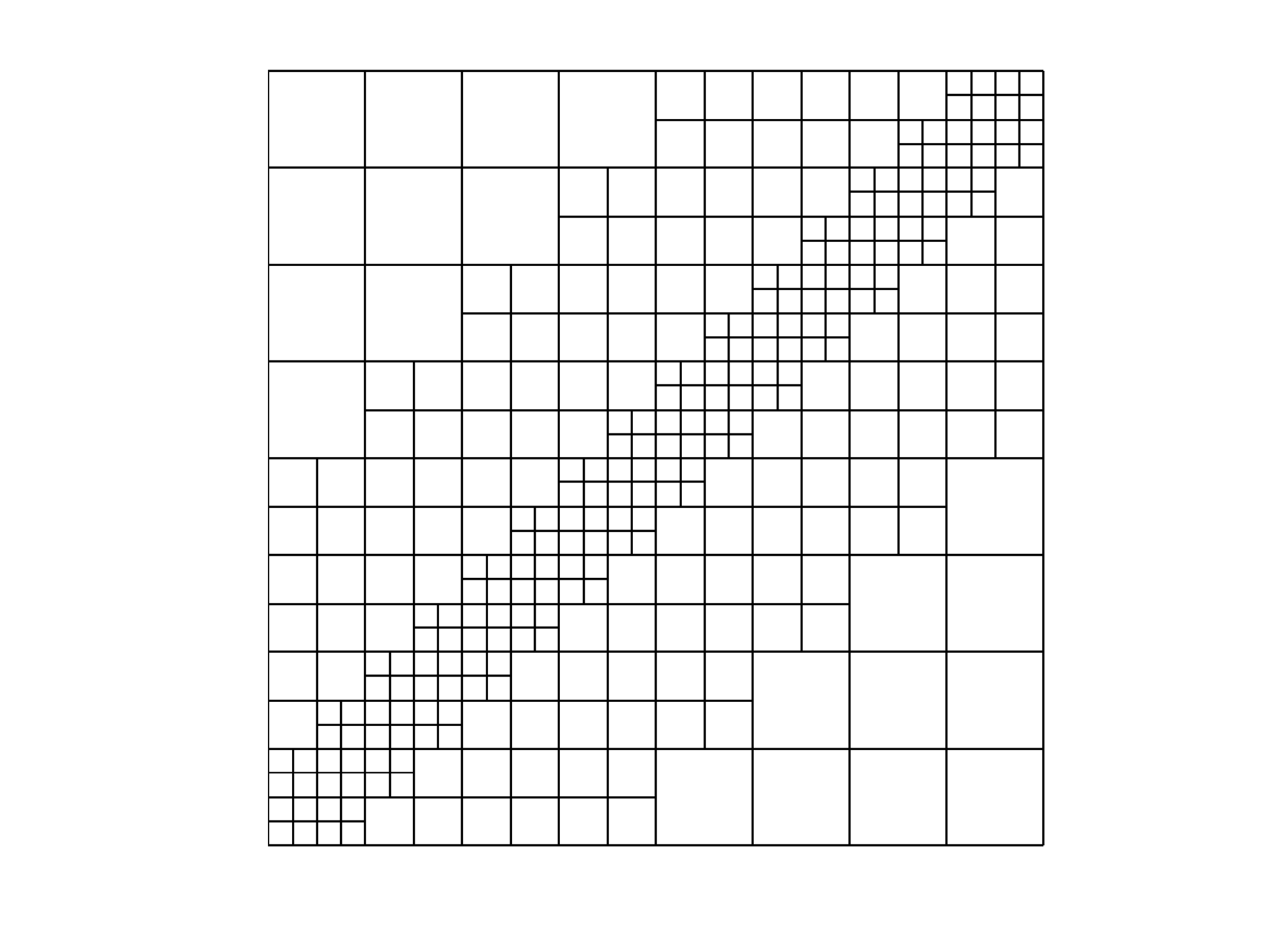}}\hspace*{-.75cm}
\subfigure[$(p_1,p_2)=(3,3)$ ]{
\includegraphics[scale=0.25]{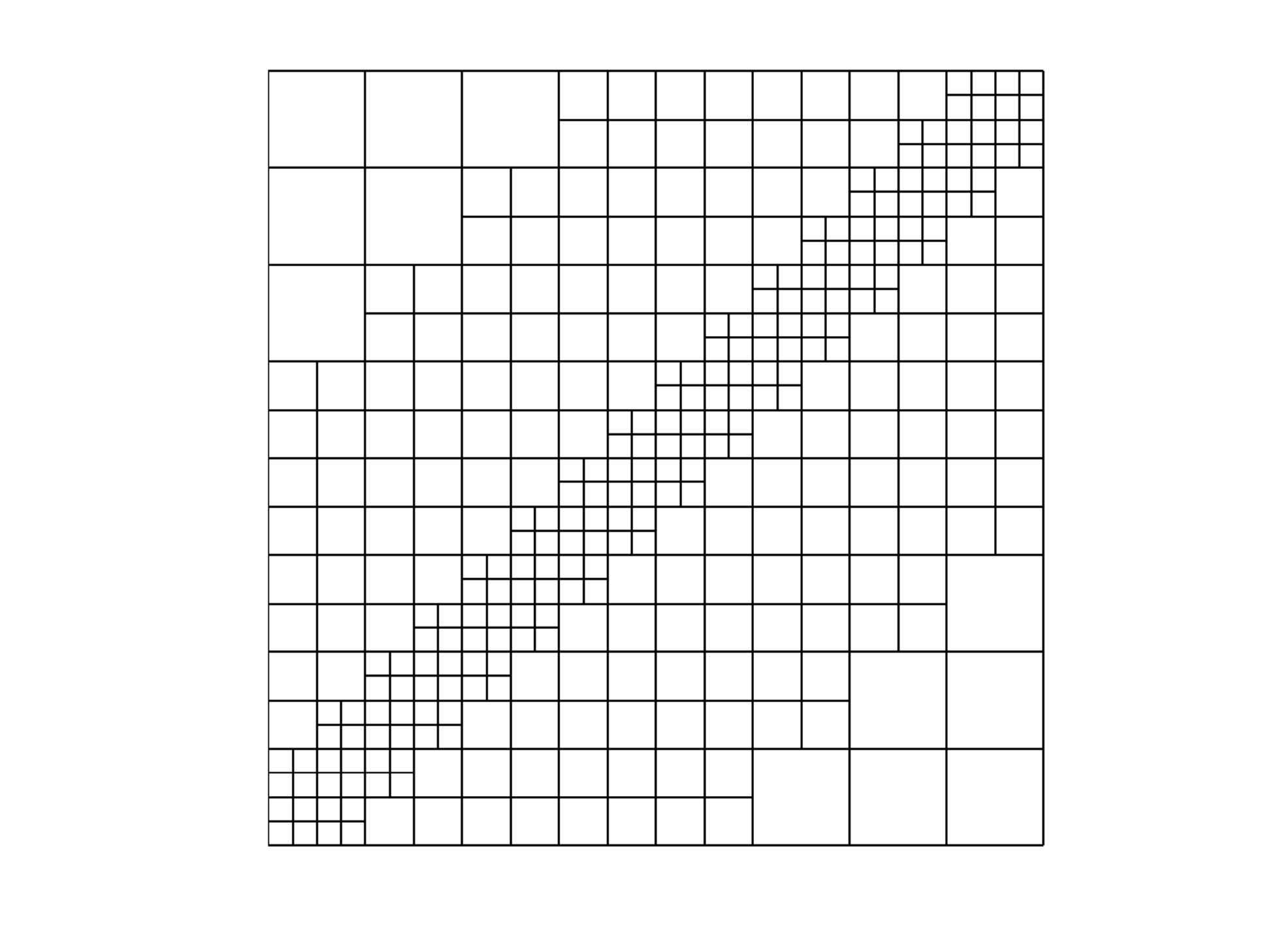}}\hspace*{-.75cm}
\subfigure[$(p_1,p_2)=(4,4)$ ]{
\includegraphics[scale=0.25]{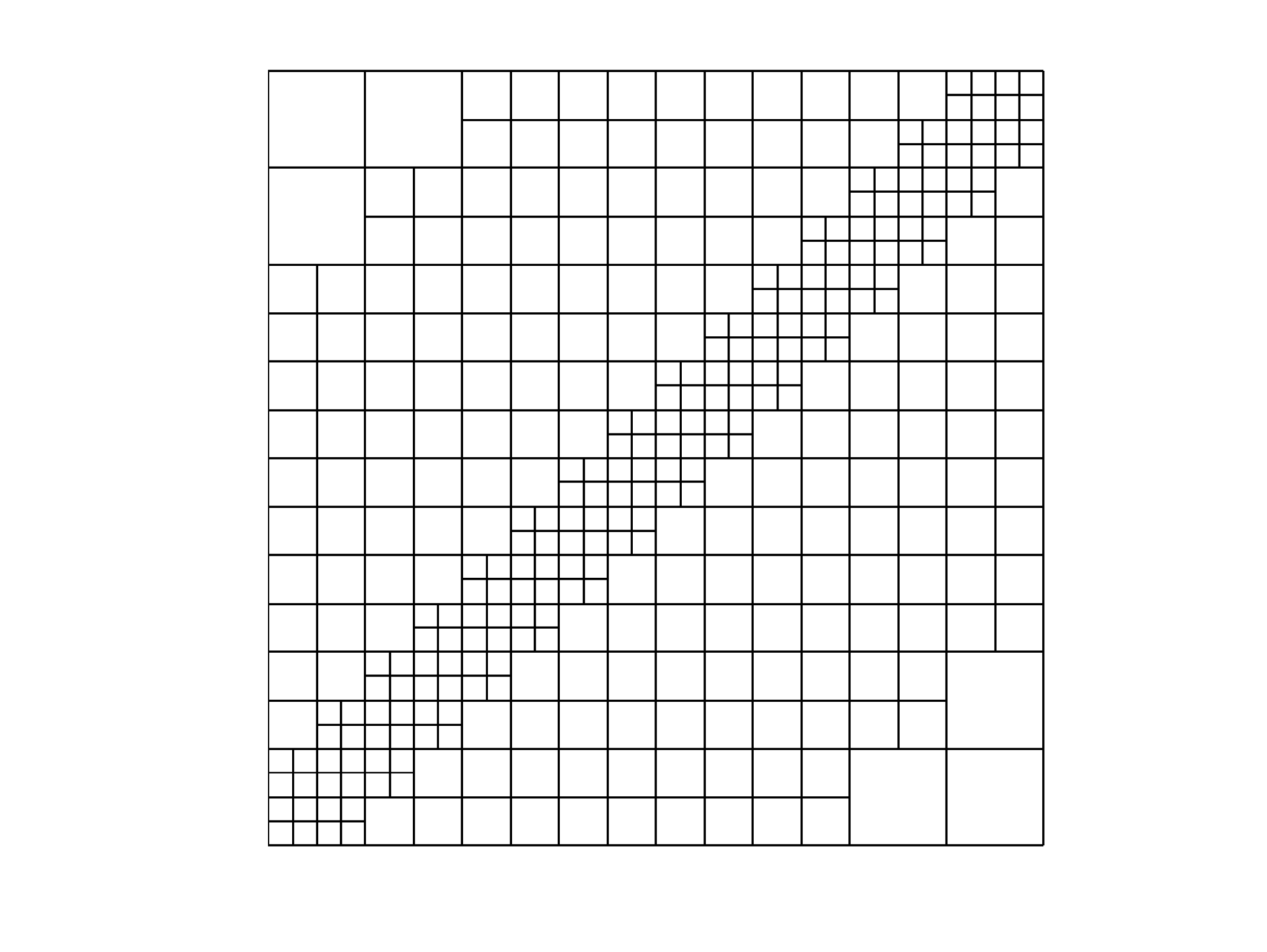}}
\caption{Admissible meshes of class 2 considered in Example~\ref{exm:01} with respect to different degrees.}
\label{fig:exm01}
\end{center}\end{figure}



\end{exm}

The following proposition generalizes the class of admissible meshes considered in the previous example to the case of an arbitrary $m\ge 2$.
\begin{prn}\label{prop:adm}
Let $\hat{Q}$ be the mesh of active elements defined according to \eqref{eq:active} and \eqref{eq:mesh} with respect to the domain hierarchy $\hat{\Omega}^0\supseteq\hat{\Omega}^1\supseteq\ldots\supseteq\hat{\Omega}^{N-1}$. If
\begin{equation}\label{eq:sameshes}
\hat{\Omega}^\ell\subseteq \hat{\omega}^{\ell-m+1},
\end{equation}
for $\ell=m,m+1,\ldots,N-1$, then the mesh $\hat{{\cal Q}}$ is admissible of class $m$.
\end{prn}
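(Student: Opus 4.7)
The plan is to generalize the reasoning sketched for $m=2$ in Example~\ref{exm:01}. I will fix an arbitrary active element $\hat Q\in\hat\cG^\ell$ and an arbitrary truncated basis function $\hat\tau=\Trunc^{\ell'+1}\hat\beta\in\hat\cT(\hat\cQ)$, with $\hat\beta\in\hat\cB^{\ell'}\cap\hat{\mathcal{H}}$, that takes a non-zero value somewhere on $\hat Q$, and aim to show $\ell-m+1\leq \ell'\leq \ell$. This range contains at most $m$ consecutive integers, so Definition~\ref{dfn:amesh} will then give the admissibility of class $m$.

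The upper bound $\ell'\leq\ell$ is a short support argument: by Definition~\ref{dfn:hb} one has $\supp\hat\tau\subseteq\supp\hat\beta\subseteq\hat\Omega^{\ell'}$, so if $\ell'>\ell$ then $\supp\hat\tau\subseteq\hat\Omega^{\ell+1}$, which by the activeness condition in~\eqref{eq:active} cannot meet the interior of $\hat Q$, contradicting $\hat\tau|_{\hat Q}\not\equiv 0$.

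The bulk of the work is the lower bound $\ell'\geq\ell-m+1$, which I will establish by contradiction assuming $\ell'\leq\ell-m$. From $\hat Q\subseteq\hat\Omega^\ell\subseteq\hat\omega^{\ell-m+1}$, granted by~\eqref{eq:sameshes}, I will locate a level-$(\ell-m+1)$ element $\hat Q_0\supseteq \hat Q$ with $S(\hat Q_0,\ell-m+1)\subseteq\hat\Omega^{\ell-m+1}$. This will imply that every level-$(\ell-m+1)$ B-spline whose support meets $\hat Q$ has its entire support in $\hat\Omega^{\ell-m+1}$, hence is \emph{passive} at level $\ell-m+1$ and is killed by $\trunc^{\ell-m+1}$. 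Since $\ell'+1\leq\ell-m+1$ by assumption, this truncation does occur inside $\Trunc^{\ell'+1}$. Iterating the two-scale relation~\eqref{eq:2scale} and applying $\trunc^{\ell'+1},\ldots,\trunc^{\ell-m+1}$ to $\hat\beta$ in succession, I expect that right after $\trunc^{\ell-m+1}$ the remaining function is a combination of level-$(\ell-m+1)$ B-splines whose supports are all disjoint from $\hat Q$, so its restriction to $\hat Q$ vanishes.

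The most delicate step will be to argue that this vanishing on $\hat Q$ is preserved by the remaining truncations $\trunc^{\ell-m+2},\ldots,\trunc^{N-1}$, since each is a global rather than local operation. The resolution should rely on the nested-support property of the B-spline two-scale relation: if a function is represented in the level-$k$ basis with all contributing basis functions having supports disjoint from $\hat Q$, re-expanding it at level $k+1$ yields contributions only from level-$(k+1)$ B-splines whose supports sit inside those coarser ones, and therefore still disjoint from $\hat Q$; applying $\trunc^{k+1}$ can only shrink that index set further. A parallel short argument handles truncations at levels $>\ell$: no level-$(k+1)$ B-spline whose support meets $\hat Q$ can be entirely supported in $\hat\Omega^{k+1}\subseteq\hat\Omega^{\ell+1}$, because the interior of $\hat Q$ misses $\hat\Omega^{\ell+1}$ by activeness, so those truncations leave the values on $\hat Q$ unchanged. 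Assembling these pieces will give $\hat\tau|_{\hat Q}=0$, the desired contradiction, so $\ell'\geq\ell-m+1$ and $\hat\cQ$ is admissible of class $m$.
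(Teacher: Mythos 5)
Your argument is correct and follows essentially the same route as the paper's proof: condition \eqref{eq:sameshes} forces every level-$(\ell-m+1)$ B-spline whose support meets an active element of $\hat{\cal G}^\ell$ to have its support contained in $\hat{\Omega}^{\ell-m+1}$, so it is discarded by $\trunc^{\ell-m+1}$ and no THB-spline of level $\le \ell-m$ survives on that element; you merely make explicit two points the paper leaves implicit, namely the upper bound $\ell'\le\ell$ and the persistence of the vanishing on $\hat{Q}$ under the remaining truncations. One small terminological slip: the functions removed by $\trunc^{\ell-m+1}$ are those with support contained in $\hat{\Omega}^{\ell-m+1}$, which in the paper's convention are \emph{not} the passive ones (the passive functions, with support not contained in $\hat{\Omega}^{\ell-m+1}$, are precisely the ones that are kept), but this mislabelling does not affect the mathematics.
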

\begin{proof}
For any $\hat{\tau}\in\hat{{\cal T}}(\hat{\cal Q})$ introduced at level $\ell-m$, the function $\trunc^{\ell-m+1}\hat{\tau}$ defined by equation \eqref{eq:trunc} is a linear combination of basis functions $\hat{\beta}\in\hat{{\cal B}}^{\ell-m+1}$ so that $\hat{\beta}\vert_{\hat{\omega}^{\ell-m+1}}=0$. Since
\[
\hat{\omega}^{\ell-m+1} = {\Rd \bigcup} 
\left\{
{\Rd \overline{\hat{Q}}:\,}
\hat{Q}\in \hat{G}^{\ell-m+1} {\Rd \,\wedge\,} 
S(\hat{Q},{\ell-m+1})\subseteq
\hat{\Omega}^{\ell-m+1}\right\},
\] 
if condition \eqref{eq:sameshes} holds for $\ell=m,m+1,\ldots,N-1$, then also $\trunc^{\ell-m+1}\hat{\tau}\vert_{\hat{\Omega}^\ell}=0$. Consequently, the truncation of a B-spline introduced at level $\ell-m$ will be non-zero on $\hat{\Omega}^{\ell-m}\setminus\hat{\Omega}^{\ell}$. This means that any element $\hat{Q}\in\hat{{\cal G}}^\ell$ belongs to the support of THB-splines of only $m$ different levels: {\Rd $\ell-m+1, \ldots, \ell$}.
\end{proof}

As we will detail later, a relevant set of admissible meshes is the one verifying condition \eqref{eq:sameshes} for $\ell=m,m+1,\ldots,N-1$, where different values of $m\ge2$ can be considered.
\begin{dfn}\label{dfn:samesh} 
A mesh $\hat{{\cal Q}}$ is strictly admissible of class $m$ if it verifies the assumptions of Proposition~\ref{prop:adm}. 
\end{dfn}
The meshes considered in Example~\ref{exm:01} are strictly admissible of class $2$.
\section{The module SOLVE:  the Galerkin method}
\label{sec:solve}
In this section we describe our model problem  and introduce its discretization by means of hierarchical splines.   Indeed,  we have no aim of generality, we consider the most simple elliptic problem.
As a first step, we give a precise definition of the domain $\Omega$ in which our problem is posed. 

Given a strictly admissible mesh $\hat{\cal Q}_0$ and the corresponding set of truncated basis function $\hat{\cal T}_0$,  we suppose that the computational domain $\Omega$ is provided as a linear combination of functions in $\hat{\cal T}_0$ and control points: 

\begin{equation}
  \label{eq:1}
  \bx\in {\Rd\overline{\Omega}}\,, \quad  \bx = \bF(\hat{\bx}) = \sum_{\hat\tau\in \hat{{\cal T}}_0} \mathbf{C}_{\hat{\tau} } \hat{\tau}(\hat{\bx})\qquad \hat{\bx} \in 
  {\Rd \hat{\Omega}^0}
\end{equation}
where $\mathbf{C}_{\hat{\tau}}\in \mathbb{R}^d$. 
In all what follows, we suppose that the mapping {\Rd $\bF: \hat{\Omega}^0\to \overline{\Omega}$} is a bi-Lipschitz homeomorphism:
\begin{equation}
  \label{eq:Fbound}
 \| D^\alpha\bF\|_{L^{\infty}({\Rd \hat{\Omega}^0})} \le C_{\bF}, 
\quad 
\| D^\alpha\bF^{-1}\B \|_{L^{\infty}( \Omega)}   
	\le c_{\bF}^{-1},  \qquad |\alpha|\leq 1
\end{equation}
where $c_\bF$ and  and  $C_\bF$   are independent constants bounded away from infinity.

We consider then the following problem:

\begin{equation}
\label{eq:mp_1}
-\div (\bA \nabla u) = f \quad\text{in}\; \Omega, 
\qquad 
u\myvert{\partial\Omega} = 0,
\end{equation} 
where $\bA\in C^\infty (\bar{\Omega})$ is the diffusion matrix which verifies \eqref{eq:5}.

In order to define the variational formulation of the problem, 
we consider the space of functions in $H^1(\Omega)$ with vanishing trace on $\partial\Omega$
\[
\mathbb{V}:=H_0^1(\Omega):=\left\{
v\in H^1(\Omega):v\myvert{\partial\Omega}=0
\right\},
\]
endowed with the norm $\| u \|^2_{\VV} = \| \nabla v\|^2_{L^2(\Omega)^d} + \|  v \|^2_{L^2(\Omega)}$. 
A weak solution of \eqref{eq:mp} is a function $u\in \VV$ satisfying
\begin{equation}\label{eq:weak}
u\in\mathbb{V}:\quad
a(u,v) = \langle f,v \rangle, \quad \forall\, v\in\mathbb{V},
\end{equation}
where $a:\mathbb{V}\times\mathbb{V}\rightarrow\mathbb{R}$ is the bilinear form 
\[
a(u,v):=\int_{\Omega} \mathbf{A} \nabla u \nabla v, \qquad
\forall\, u,v\in\mathbb{V},
\]
and $\langle \cdot,\cdot \rangle$ stands for the $L^2(\Omega)$ scalar product. We assume that $f\in\mathbb{V}^*$. 
The bilinear form $a(u,v)$ is coercive and continuos with constant $\alpha_1$ and $\alpha_2$, respectively:
\begin{align}
  \label{eq:coer}
  & a(u,u)  \geq  \alpha_1 \| u\|^2_{\VV} \qquad & u \in {\Rd \VV ,}\\
\label{eq:cont}
 & a(u,v)  \leq  \alpha_2 \| u \|_\VV \|v \|_\VV\qquad & u\,,\ v \in {\Rd \VV .}
\end{align}
Moreover, it induces the \emph{energy norm}: 
$|||v|||_\Omega:=a(v,v)^{1/2}$, $\forall v\in\mathbb{V}$. 
The coercivity and continuity properties of $a(u,v)$ implies the equivalence between the energy and the $H^1(\Omega)$ norms on $\mathbb{V}$. In addition, the Lax-Milgram theorem ensures the existence and uniqueness of the weak solution \eqref{eq:weak}.

\smallskip

   {We construct now our module SOLVE as the Galerkin discretization of \eqref{eq:weak} by means of hierarchical splines on $\Omega$. To this aim, we first need to introduce a suitable notation for hierarchical meshes and spaces  on $\Omega$. }

We consider an admissible mesh $\hat{\cal Q}$, such that  $\hat{{\cal Q}} \succeq \hat{{\cal Q}}_0$ and we denote by $\hat{{\cal T}}$ the corresponding basis truncated basis functions. Moreover, we construct the corresponding mesh and functions of the physical domain via pullback:
\[ {\cal Q} = \{  Q =  \bF(\hat{Q}): \B \hat{Q}\in \hat{{\cal Q}}\}.\] 
For all $\hat \tau\in \hat{{\cal T}}$, we construct:
\begin{equation}
  \label{eq:4}
  \tau (\bx) = \hat{\tau}(\hat{\bx}), \qquad \bx = \bF(\hat{\bx}). 
\end{equation}
    and we denote by ${\cal T}$ the collection of all mapped basis functions, and by $\SS(\cQ)$ the space they generate, $ \SS(\cQ) =\myspan{\cal T} ({\cal Q})$. 

Clearly,  ${\cal Q}$ is a hierarchical mesh on the domain $\Omega$ and for it, we will make use of all the nomenclature introduced in Section \ref{sec:hspaces}  by simply \B removing the $\hat{\cdot}$. First, for all elements $Q$, we denote by $\hat{Q}$ its preimage through $\bF$, i.e., $Q= \bF(\hat{Q})$, and $h_Q = |Q|^{1/d}$, where $|Q|$ represents the volume of $Q$.   Moreover, we set: 
\begin{itemize}
\item $\Omega^\ell = \bF(\hat\Omega^\ell)$ and $\omega^\ell = \bF(\hat\omega^\ell)$;
\item ${\cal G}^\ell =\{Q \in {\cal Q} \ :\ \hat{Q}\in \hat{\cal G}^\ell\}$ and $G^\ell = \{ Q\subset \Omega \ :\ \hat{Q}\in \hat{G}^\ell\}$;
\item for all ${Q}\in {\cal G}^\ell$,  its support extension with respect to  level $k$ is 
\[S(Q, k) = \{ Q'\in G^k \ :\ \hat{Q}'\in S(\hat{Q},k)\}.\]
\end{itemize}
Finally,  when ${\cal Q}^\star$ is a refinement of ${\cal Q}$, we will write ${\cal Q}^\star \succeq {\cal Q}$, when their pre-images $\hat{{\cal Q}}^\star$ and $\hat{{\cal Q}}$ verifies $\hat{{\cal Q}}^\star \succeq \hat{{\cal Q}}$. 

We are finally in the position to describe the discrete problem we want to solve adaptively. The Galerkin approximation of \eqref{eq:weak} consists in solving:
\begin{equation}\label{eq:gal}
\text{find } \ U\in\mathbb{S}_D(\cQ): \quad
a(U,V) = \langle  f,V \rangle,\quad \forall\, V\in\mathbb{S}_D(\cQ),
\end{equation}
where 
\[
\SS_D(\cQ) = \left\{V\in \SS({\cal Q}): V\myvert{\partial\Omega} = 0\right\}.
\]
In the subsequent analysis we assume for simplicity $\SS_D({\cal Q})\subset C^1(\Omega)$. This assumption is of course not needed for the development of an adaptive strategy, but it allows us to simplify the analysis, by also showing the specific changes with respect to $C^0$ finite elements. The general case could be treated in a similar way following the classical theory of adaptive finite element methods.  
\section{The module ESTIMATE: the residual based error indicator}
\label{sec:estimate}
The residual associated to $U\in\mathbb{S}$ is the functional in $\mathbb{V}^*$ defined by
\begin{equation*}
  \langle R,v\rangle  := \langle f,v\rangle -a(U,v), 
\end{equation*}
that satisfies
\begin{align*}
&\langle r,v\rangle = a(u-U,v), \quad\forall\, v\in\mathbb{V},\\
&a(u-U,V) = \langle r,V\rangle = 0, \quad \forall\, V\in\mathbb{S}. 
\end{align*}
By recalling that all discrete functions are continuous with continuous derivatives, we can 
integrate by parts and  obtain
\begin{equation*}
\langle r,v\rangle = \int_{\Omega} fv - \bA \nabla U\nabla v 
= \int_\Omega fv - \div (\bA \nabla U)  v 
,
\end{equation*}
where, thanks to our assumption that $\mathbb{S} \subset C^1(\Omega)$, the quantity $r= f - \div (\bA \nabla U) $ belongs to $L^2(\Omega)$. In particular, as we expect, this means that the residual does not contain any edge contribution as in typical finite element indicators
\cite{verfurth2013}.

One of the fundamental ingredient in the module ESTIMATE is  the equivalence between the primal norm of the error and the dual norm of the residual:
\begin{equation}\label{eq:ee1}
||u-U||_{\mathbb{V}} \le 
\frac{1}{\alpha_1} ||r||_{\mathbb{V}^*}\le 
\frac{\alpha_2}{\alpha_1}||u-U||_{\mathbb{V}}.
\end{equation}
As it is standard, in order to use the residual as error indicator, we would like to replace the norm $\|\cdot\|_{\mathbb{V}*}$ with the following error indicator
\begin{equation}\label{eq:ind}
\varepsilon^2_{\cal Q}(U, {\cal Q}) 
= \sum_{Q\in{\cal Q}} \varepsilon_{\cal Q}^2(U,Q) 
\qquad\text{with}\qquad
\varepsilon^2_{\cal Q}(U,Q)
= h_Q^2 ||r||_{L^2(Q)}^2. 
\end{equation}
When no confusion is possible, we may also abbreviate the above notation with $\varepsilon^2_{\cal Q}(U)$. 

Following \cite{morin2001} (see also \cite{NochettoCIME}), we will show that the following holds: 

\begin{equation}
\label{eq:apost-1}
||u-U||_{\mathbb{V}} \lesssim
\varepsilon_{\cal Q}(U,{\cal Q}) \lesssim
||u-U||_{\mathbb{V}}
+
\osc_{\cal Q}(U, {\cal Q}),
\end{equation}
where 
\begin{equation*}
  \label{eq:oscillaz}
  \osc^2_{\cal Q}(U, {\cal Q}) = \sum_{Q\in {\cal Q}} \osc^2(U,Q) \quad \text{with} \quad \osc(U,Q) = h_Q \| r-\Pi_\bn r\|_{L^2(Q)}
\end{equation*}
and $\Pi_\bn : L^2(Q) \to \mathbb{Q}_\bn$, $\bn=(n_1,n_2,n_3)$,  denotes the $L^2$ projector onto the space of polynomials of degree $n_j$ in the space direction $j$. The degrees $n_j$, $j=1,\ldots,d$ can be fixed large enough so that the oscillation are ``smaller'' than the error
\cite{bonito2010}. 

Indeed Theorem \ref{thm:lb} below, will also provide a local version of the lower bound in \eqref{eq:apost-1} that reads:
\begin{equation*}
  \label{eq:lb-local-1}
  \varepsilon_{\cal Q}(U,{Q}) \lesssim
||u-U||_{\mathbb{V}(Q)}
+
\osc_{\cal Q}(U, {Q}).
\end{equation*}

\subsection{A posteriori upper bound}
In this section we prove that the residual based error indicator defined in \eqref{eq:ind} is \emph{reliable}, i.e., it is an upper bound for the Galerkin error.

\begin{thm}\label{thm:ub}
Let $u$ be the exact weak solution of the model problem \eqref{eq:weak}. The error of the Galerkin approximation $U\in\mathbb{S}({\cal Q})$ in \eqref{eq:gal} is bounded in terms of the error indicator $\varepsilon_{\cal Q}(U)$ introduced in \eqref{eq:ind} as follows:
\begin{equation}
\label{eq:ub}
||u-U||_{\mathbb{V}} \leq C_{\mathrm{up}} \varepsilon_{\cal Q}(U),
\end{equation}
where the constant $C_{\mathrm{up}}$ is independent on the mesh size and on the level of hierarchy.
\end{thm}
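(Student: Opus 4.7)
The plan is to follow the classical residual-based a posteriori argument, adapted to the hierarchical spline setting. First, I would invoke the equivalence $\|u-U\|_{\VV}\le \alpha_1^{-1}\|r\|_{\VV^*}$ from \eqref{eq:ee1}, which reduces the task to estimating $\langle r,v\rangle$ for an arbitrary $v\in\VV$ with $\|v\|_{\VV}\le 1$. By Galerkin orthogonality (the second identity displayed right after the definition of the residual), $\langle r,v\rangle = \langle r,v-V\rangle$ for every $V\in\SS_D(\cQ)$, so I am free to subtract a suitable spline quasi-interpolant of $v$.

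The key ingredient is the existence of a quasi-interpolation operator $I_\cQ:\VV\to\SS_D(\cQ)$ satisfying local first-order approximation estimates
\[
\|v-I_\cQ v\|_{L^2(Q)} \lesssim h_Q\,\|\nabla v\|_{L^2(\widetilde Q)},\qquad Q\in\cQ,
\]
where $\widetilde Q$ is a local patch of elements containing $Q$ with size comparable to $h_Q$ and whose members overlap only a uniformly bounded number of times. Admissibility of class $m$ is exactly what makes this uniform in the depth $N$: by Corollary~\ref{crl:thbcor1} the number of truncated basis functions acting on $Q$ is controlled, and by Corollary~\ref{crl:thbcor2} their supports are comparable to $h_Q$. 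The detailed construction of $I_\cQ$ is deferred to the companion paper \cite{buffa2015b}, so at this point I would only cite it.

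Given $I_\cQ$, the plan is to set $V=I_\cQ v$, decompose
\[
\langle r,v-V\rangle = \sum_{Q\in\cQ}\int_Q r\,(v-I_\cQ v),
\]
and apply Cauchy--Schwarz on each element together with the interpolation estimate to obtain
\[
\langle r,v-V\rangle \lesssim \sum_{Q\in\cQ} h_Q\,\|r\|_{L^2(Q)}\,\|\nabla v\|_{L^2(\widetilde Q)}.
\]
A discrete Cauchy--Schwarz in the index $Q$, combined with the finite overlap of the patches $\{\widetilde Q\}_{Q\in\cQ}$ (again a consequence of admissibility of class $m$), then yields
\[
\langle r,v-V\rangle \lesssim \Bigl(\sum_{Q\in\cQ} h_Q^2\,\|r\|_{L^2(Q)}^2\Bigr)^{1/2}\|\nabla v\|_{L^2(\Omega)} = \varepsilon_\cQ(U)\,\|\nabla v\|_{L^2(\Omega)}.
\]
Taking the supremum over $v\in\VV$ with $\|v\|_\VV\le 1$ and inserting the result into \eqref{eq:ee1} gives \eqref{eq:ub}, with $C_{\mathrm{up}}$ arising as a product of $1/\alpha_1$, the quasi-interpolation constant, and the overlap constant, none of which depend on $N$ or on the mesh size.

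The main obstacle is the quasi-interpolation step. Classical Schumaker-type quasi-interpolants are available on each tensor-product level, but in the truncated hierarchical setting they must be glued across levels using property (vii) (preservation of coefficients) so that (a) the output lies in $\SS_D(\cQ)$ and respects the homogeneous Dirichlet condition, (b) locality with patches of size comparable to $h_Q$ is preserved, and above all (c) the stability and approximation constants are \emph{uniform in the depth $N$}. This last point is precisely where strict admissibility of class $m$ enters through Corollaries~\ref{crl:thbcor1}--\ref{crl:thbcor2}, and it is the technical content that is delegated to \cite{buffa2015b}; once that operator is in hand, the present upper bound follows routinely by the chain of inequalities above.
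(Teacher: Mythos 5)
Your argument is mathematically sound in outline, but it is a genuinely different route from the one the paper takes, and the difference matters for self-containedness. You reduce the bound to Galerkin orthogonality plus a Cl\'ement/Scott--Zhang-type quasi-interpolant $I_\cQ$ with local estimates $\|v-I_\cQ v\|_{L^2(Q)}\lesssim h_Q\|\nabla v\|_{L^2(\widetilde Q)}$ and constants uniform in the depth $N$. The paper instead avoids any interpolation operator: it writes $\langle r,v\rangle=\sum_{\tau\in\cT}\langle r,\tau v\rangle=\sum_{\tau\in\cT}\inf_{c_\tau}\langle r,\tau(v-c_\tau)\rangle$ using the partition-of-unity property (iix) of the truncated basis together with orthogonality of the residual to each basis function, then applies Cauchy--Schwarz and a Poincar\'e inequality on each support $\omega_\tau=\supp\tau$, and finally controls the overlaps via Corollaries~\ref{crl:thbcor1} and \ref{crl:thbcor2}. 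Both arguments hinge on exactly the same structural facts about admissible meshes (bounded number of active functions per element, supports comparable to the local mesh size), so the constants are uniform in $N$ in either case; what the partition-of-unity version buys is that it needs nothing beyond what is already proved in Section~\ref{sec:pro} and Corollaries~\ref{crl:thbcor1}--\ref{crl:thbcor2}.

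The one substantive reservation about your version is that its central ingredient --- a quasi-interpolant onto $\SS_D(\cQ)$ that is local, respects the boundary condition, and is stable uniformly in $N$ --- is precisely the nontrivial construction that this paper explicitly postpones to the companion work \cite{buffa2015b}. You acknowledge this, but as a proof of Theorem~\ref{thm:ub} \emph{within this paper} the argument is therefore incomplete: the hard part is outsourced. The paper's choice of the partition-of-unity argument appears to be made deliberately to sidestep exactly this dependency. (A small point you should also address if you pursue your route: near $\partial\Omega$ the interpolant must vanish on the boundary while retaining first-order accuracy, which is handled in the paper's argument by using the Friedrichs-type Poincar\'e inequality for the boundary basis functions, where no constant $c_\tau$ is subtracted.)
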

\begin{proof}
This proof follows exactly the lines of the classical proof of upper bound in residual based error estimators. For completeness we repeat here the steps that can be found in, e.g., Theorem 6 in \cite{NochettoCIME}.

Using (\ref{eq:ee1}), we have $\| u-U\|_{\VV} \lesssim \displaystyle \frac{1}{\alpha_1}  \| r \|_{\VV^\star}$, and we will prove that $ \| r \|_{\VV^\star} \les \varepsilon_\cQ(U)$.  

Since the basis functions in ${\cal T}$ form a partition of unity and the residual is orthogonal to all basis functions $\tau$ in ${\cal T}$, it holds:
\[
\langle r, v\rangle  = \sum_{\tau \in {\cal T}}\langle r, \tau \, v \rangle = \sum_{\tau\in {\cal T}}  \inf_{c_\tau \in \mathbb{R} } \langle r, \tau  \, (v-c_\tau) \rangle.
 \]

By standard Cauchy-Schwarz inequality, we estimate the terms in the right hand side as follows:
 
\[\langle R, \tau\, (v-c_\tau) \rangle = \int_\Omega 
r\, \tau  (v-c_\tau) \leq \| r\, \tau^{1/2} \|_{L^2(\Omega)}  \| \tau^{1/2} (v-c_\tau) \|_{L^2(\Omega)}.   \]
We denote by $\omega_\tau =\supp\tau$ and  by $h_{\omega_\tau} = |\supp \tau|^{1/d}$, i.e., its size.    We can deduce by Poincar\'e inequality that:
\[ \| \tau^{1/2} (v-c_\tau )\|_{L^2(\omega_\tau)}  \les  h_{\omega_\tau} \| \nabla v\|_{L^2(\omega_\tau)^d}.\]

By taking into account Corollaries \ref{crl:thbcor1}  and \ref{crl:thbcor2}, we have
\begin{enumerate}
\item $\sum_{\tau\in {\cal T}} \| \nabla v\|^2_{L^2(\omega_\tau)^d} \les \| \nabla v\|^2_{L^2(\Omega)^d}  $; 
\item let $h$ be the piecewise constant function which takes values $h(\bx) = |Q|^{1/d}$, $\bx\in Q$ for all $Q\in \cQ$. It holds: 
\[  \sum_{\tau\in {\cal T}} h^2_{\omega_\tau}   \| r\, \tau^{1/2} \|^2_{L^2(\omega_\tau)}  \les  \sum_{\tau\in {\cal T}}  \int_{\omega_\tau} h^2\,   r^2 \tau  = \int_\Omega h^2\,   r^2 = \varepsilon_\cQ(U). \]
\end{enumerate}

The estimate (\ref{eq:ub}) follows. 

\end{proof}

\subsection{A posteriori lower bound}

 In this section we prove that the residual based error indicator defined in \eqref{eq:ind} is \emph{efficient}, i.e., it is a lower bound of the Galerkin error up to oscillations. \B

\begin{thm}
  \label{thm:lb}
Let $u$ be the exact weak solution of the model problem \eqref{eq:weak}. The error of the Galerkin approximation $U\in\mathbb{S}({\cal Q})$ in \eqref{eq:gal} bounds the  error indicator $\varepsilon_{\cal Q}(U)$ introduced in \eqref{eq:ind} up to oscillations:

\begin{equation}
  \label{eq:lb-local-2}
  \varepsilon_{\cal Q}(U,{Q}) \leq C_{\mathrm{lb}} \big(
||u-U||_{\mathbb{V}(Q)}
+
\osc_{\cal Q}(U, {Q})\big),
\end{equation}
where the  constant $C_{\mathrm{lb}} $ does not depend on $Q$. 
\end{thm}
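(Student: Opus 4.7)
The plan is to adapt the classical Verfürth bubble-function argument to a single element $Q$ of the hierarchical mesh in the physical domain. Fix $Q\in\cQ$ and abbreviate $r=f-\div(\bA\nabla U)\in L^2(Q)$. Let $\bar r := \Pi_\bn r$ be the polynomial projection on $Q$ appearing in the oscillation term. By the triangle inequality
\[
h_Q\|r\|_{L^2(Q)} \le h_Q\|\bar r\|_{L^2(Q)} + h_Q\|r-\bar r\|_{L^2(Q)} = h_Q\|\bar r\|_{L^2(Q)} + \osc_\cQ(U,Q),
\]
so it is enough to control $h_Q\|\bar r\|_{L^2(Q)}$ by the local energy error plus $\osc_\cQ(U,Q)$.

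The technical heart of the proof is the construction of a suitable bubble $\phi_Q$ on $Q$. I build it on the reference element $\hat Q$ as a tensor product of one-dimensional polynomial bumps vanishing at the endpoints, and pull it back via $\bF$, setting $\phi_Q = \hat\phi_{\hat Q}\circ \bF^{-1}$; by \eqref{eq:Fbound}, $\phi_Q$ is Lipschitz, supported in $Q$, vanishes on $\partial Q$, and takes values in $[0,1]$. Standard bubble-function estimates on $\hat Q$ transferred by $\bF$ give, for any polynomial $p$ of coordinate degree $\bn$ on $Q$,
\[
\|p\|^2_{L^2(Q)}\lesssim \int_Q \phi_Q\, p^2,\qquad \|\phi_Q p\|_{L^2(Q)}\lesssim \|p\|_{L^2(Q)},\qquad \|\nabla(\phi_Q p)\|_{L^2(Q)}\lesssim h_Q^{-1}\|p\|_{L^2(Q)},
\]
with constants depending on $\bn$, $C_\bF$ and $c_\bF$ only.

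I then take $v_Q:=\phi_Q\bar r$, extended by zero outside $Q$; since $\phi_Q|_{\partial Q}=0$, $v_Q\in H^1_0(Q)\subset\VV$. Split
\[
\int_Q \phi_Q\bar r^{\,2} = \int_Q \phi_Q\bar r(\bar r - r) + \int_Q r\, v_Q.
\]
Cauchy--Schwarz and the second bubble estimate above bound the first term by $\|r-\bar r\|_{L^2(Q)}\|\bar r\|_{L^2(Q)}$. For the second term, the residual identity $\langle r,v_Q\rangle = a(u-U,v_Q)$, the continuity \eqref{eq:cont} applied locally, and the inverse estimate give
\[
\int_Q r\,v_Q = a(u-U,v_Q)\le \alpha_2\,\|u-U\|_{\VV(Q)}\,\|\nabla v_Q\|_{L^2(Q)}\lesssim \|u-U\|_{\VV(Q)}\,h_Q^{-1}\|\bar r\|_{L^2(Q)}.
\]
Using the first bubble estimate on the left-hand side, dividing by $\|\bar r\|_{L^2(Q)}$, and multiplying by $h_Q$ yields
\[
h_Q\|\bar r\|_{L^2(Q)}\lesssim \|u-U\|_{\VV(Q)}+h_Q\|r-\bar r\|_{L^2(Q)},
\]
and combining with the initial triangle inequality produces \eqref{eq:lb-local-2}.

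The main obstacle is to verify that all constants in the bubble-function estimates are independent of $Q$ and of the hierarchy depth $N$. This is where the hierarchical-mesh structure enters: because refinement is dyadic (or $q$-adic) starting from a fixed initial tensor-product grid, every element $\hat Q$ is similar, up to an affine rescaling, to a hyper-rectangle of bounded aspect ratio, so the bubble construction on $\hat Q$ has uniform polynomial norm-equivalence and inverse constants. The bi-Lipschitz bounds \eqref{eq:Fbound} on $\bF$ transfer these to the physical element $Q$ without introducing level-dependent factors, which is exactly what the $\lesssim$ notation in the statement encodes.
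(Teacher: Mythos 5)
Your proof is correct, and it is essentially a constructive unfolding of the paper's argument rather than a genuinely different one, so it is worth saying precisely where the two diverge. The paper works at the level of the local dual norm: it chains $\|r\|_{\mathbb{V}^*(Q)}\lesssim\|\nabla(u-U)\|_{L^2(Q)}$, the inverse inequality $h_Q\|\bar r\|_{L^2(Q)}\lesssim\|\bar r\|_{\mathbb{V}^*(Q)}$ for $\bar r\in\mathbb{Q}_\bn$, and the claim $\|\Pi_\bn r\|_{\mathbb{V}^*(Q)}\le\|r\|_{\mathbb{V}^*(Q)}$, and then concludes by the triangle inequality. Your test function $v_Q=\phi_Q\bar r$ is exactly the extremizer that proves that inverse inequality, so the core is identical; what your version buys is that the splitting $\int_Q\phi_Q\bar r^2=\int_Q\phi_Q\bar r(\bar r-r)+\langle r,v_Q\rangle$ passes from $\bar r$ to $r$ by absorbing the difference into the oscillation, thereby avoiding the paper's step $\|\Pi_\bn r\|_{\mathbb{V}^*(Q)}\le\|r\|_{\mathbb{V}^*(Q)}$ --- which is not a formal consequence of $\Pi_\bn$ being an $L^2$-projection and is the one delicate link in the paper's chain. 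On uniformity of constants you are at the same level of rigor as the paper (which simply asserts its inverse inequality is uniform in $Q$); the only point you should spell out is that after pulling back through $\bF$ the functions $p\circ\bF$ are no longer polynomials, so the norm equivalences on the parametric box apply to the finite-dimensional pullback space, with uniformity then recovered from \eqref{eq:Fbound}. This is a presentational refinement, not a gap.
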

\begin{proof}
  Again, this proof is classical, and we repeat the steps of the proof in Theorem 7 of \cite{NochettoCIME}.
First, it is easy to see that 
\[ \| r\|_{\mathbb{V}^*(Q)} \les \| \nabla (u-U)\|_{L^2(Q) }\]
and that the following Poincar\'e estimate is true: 
\[ \| r \| _{\mathbb{V}^*(Q)} \les h_Q \| r\|_{L^2(Q)}. \]
Moreover, let $\mathbb{Q}_\bn$, $\bn=(n_1,n_2,n_3)$ be the space of polynomials of degree $n_j$ in the space direction $j$, then we know that the inverse inequality holds: 
\[ \| \bar{r} \|_{\mathbb{V}^*} \gtrsim h_Q \| \bar{r} \|_{L^2(Q)} \qquad \forall \bar{r} \in  \mathbb{Q}_\bn,\]
where the hidden constant does not depend on $Q$ but it deteriorates with $\bn$. 

Finally, if we choose $\bar{r} = \Pi_\bn r$, it holds that $\| \bar{r}\|_{\mathbb{V}^*(Q)}  \leq \| r  \|_{\mathbb{V}^*(Q)} $. 

Now, all these ingredients can be used together in the following estimate, with $\bar{r} = \Pi_\bn r$:
\begin{equation}
  \label{eq:2}
  \begin{aligned}
    h_Q \|r\|_{L^2(Q)} & \leq h_Q \| \bar{r} \|_{L^2(Q)} + \|r-\bar{r}\|_{L^2(Q)} \\
& \les \|\bar{r}\|_{\mathbb{V}^*(Q)}  + h_Q \| r-\bar{r}\|_{L^2(Q)} \\
& \les  \|{r}\|_{\mathbb{V}^*(Q)}  + h_Q \| r-\bar{r}\|_{L^2(Q)} 
  \end{aligned}
\end{equation}
The proof is completed by setting $\osc_{\cal Q} (U,Q) = h_Q \| r-\bar{r}\|_{L^2(Q)}$. 
\end{proof}

\begin{rmk}
   It should be noted that the lower bound we have proved here will not be used in the sequel of the present paper. In fact, contraction of the error can be proved without using explicitly the lower bound. We have reported this simple proof here in order to collect the main properties of the estimator we are using. On the other hand, this lower bound will be needed in the companion paper \cite{buffa2015b} where optimality will be  addressed. 
\end{rmk}
\section{The modules MARK and REFINE}
\label{sec:mark&refine}
 We now briefly describe the considered marking strategy, before introducing a refine module that preserves the class of admissibility of a given strictly admissible mesh --- see Section~\ref{sec:ameshes}
--- and its properties. Finally, we conclude this section by discussing the contraction property of our AIGM and its convergence. \B
\subsection{MARK: the marking strategy}
Given an admissible mesh ${\cal Q}$, the Galerkin solution $U\in\mathbb{V({\cal Q})}$, the module 
\[
{\cal M} = \text{MARK}\left(
\left\{\varepsilon_{\cal Q}(U,Q)\right\}_{Q\in{\cal Q}},
{\cal Q}\right),
\]
selects and marks a set of elements ${\cal M}\subset{\cal Q}$ according to the so-called \emph{D\"orfler marking} \cite{dorfler1996}, i.e., by considering a fixed parameter $\theta\in(0,1]$ so that 
\begin{equation}\label{eq:dm}
\varepsilon_{\cal Q}(U,{\cal M}) \ge \theta\,
\varepsilon_{\cal Q}(U,{\cal Q}).
\end{equation}
This marking strategy simply guarantees that the set ${\cal M}$ of marked elements gives a substantial contribution to the total error indicator.
\subsection{REFINE: the refinement strategy}
The support extension $S(\widehat{Q},k)$ of an element $\widehat{Q}\in\widehat{\cG}^\ell$ with respect to level $k$, with $0\le k\le \ell$
introduced in Definition~\ref{dfn:hse};
\Gd analogously, we denote by $S(Q,k)$ the support extension of the physical element $Q$. 

In order to guarantee that a mesh after refinement is  admissible, we aim at imposing that each active element   at level $\ell$, $Q\in \cG^\ell$, belongs to the support of basis functions of at most levels $\ell-m+1$, \ldots, $\ell$. To achieve this, given an element $Q\in \cG^\ell$ that  we want to refine, we select the elements that are active, that are of level $\ell-m+1$ and such that their intersection with  $S(Q,\ell-m+2)$ is not empty. We collect all these elements in a neighborhood of $Q$, denoted by ${\cal N}({\Rd {\cQ},\,}Q,m) $ and  defined here below in Definition \ref{dfn:neigh}. Clearly, when $Q$ is refined, all elements in  ${\cal N}({\Rd {\cQ},\,}Q,m) $ have to be refined as well and  this procedure has to be applied recursively in order to guarantee that the final mesh is strictly admissible.  \B 

\begin{dfn}\label{dfn:neigh}
The neighborood of $Q\in {\Rd {\cQ} \,\cap\,}\cG^\ell$ with respect to $m$ is defined as
\[
{\cal N}({\Rd {\cQ},\,}Q,m) {\Rd \,:=\,} \left\{Q'\in{\cal G}^{\ell-m+1}: 
{\Rd \exists\, Q'' }\in S(Q,\ell-m+2) 
{\Rd , Q''\subseteq Q' } \right\},
\]
when ${\Rd \ell-m+1 > 0}$, and ${\cal N}({\Rd {\cQ},\,} Q,m) = \emptyset$ for ${\Rd \ell-m+1 \le 0}$.
\end{dfn}

 Figure~\ref{fig:exm02} shows the the neighborood of an element $Q$ with respect to $m=2$ when, for simplicity, the identity map is considered. 

\begin{figure}[ht!]\begin{center}
\hspace*{-.75cm}
\subfigure[$(p_1,p_2)=(2,2)$ ]{
\includegraphics[scale=0.195]{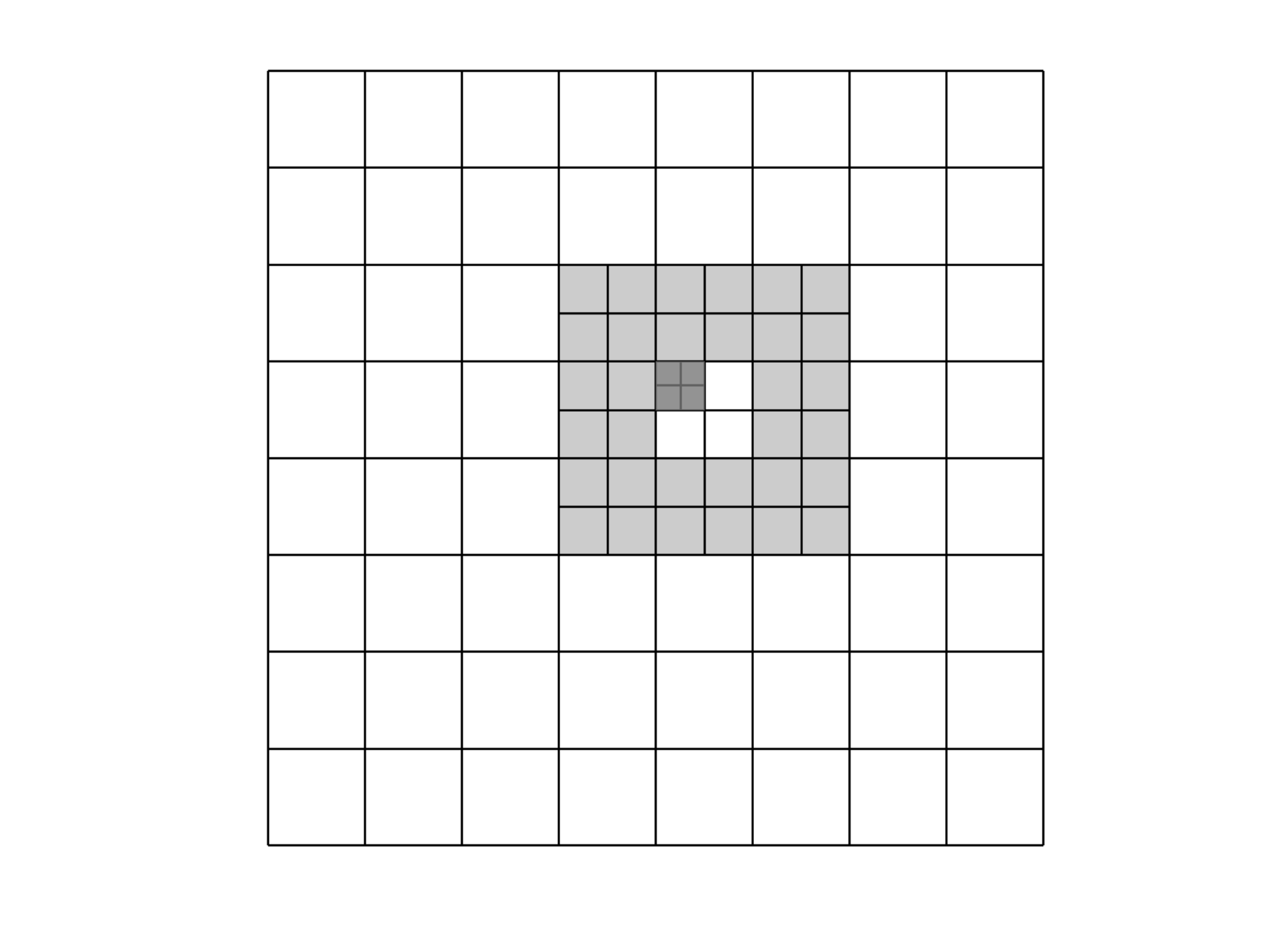}\hspace*{-.75cm}
\includegraphics[scale=0.195]{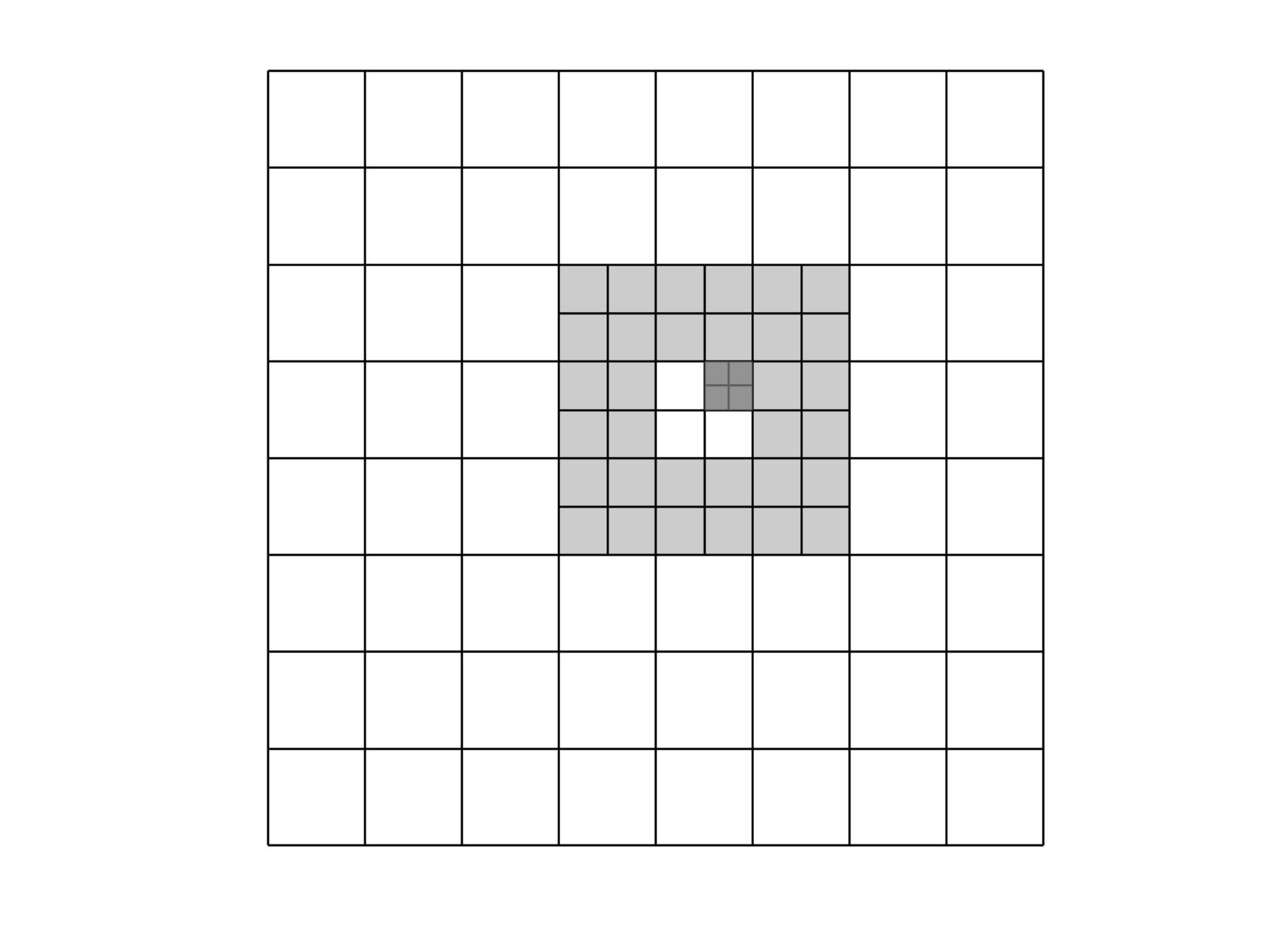}\hspace*{-.75cm}
\includegraphics[scale=0.195]{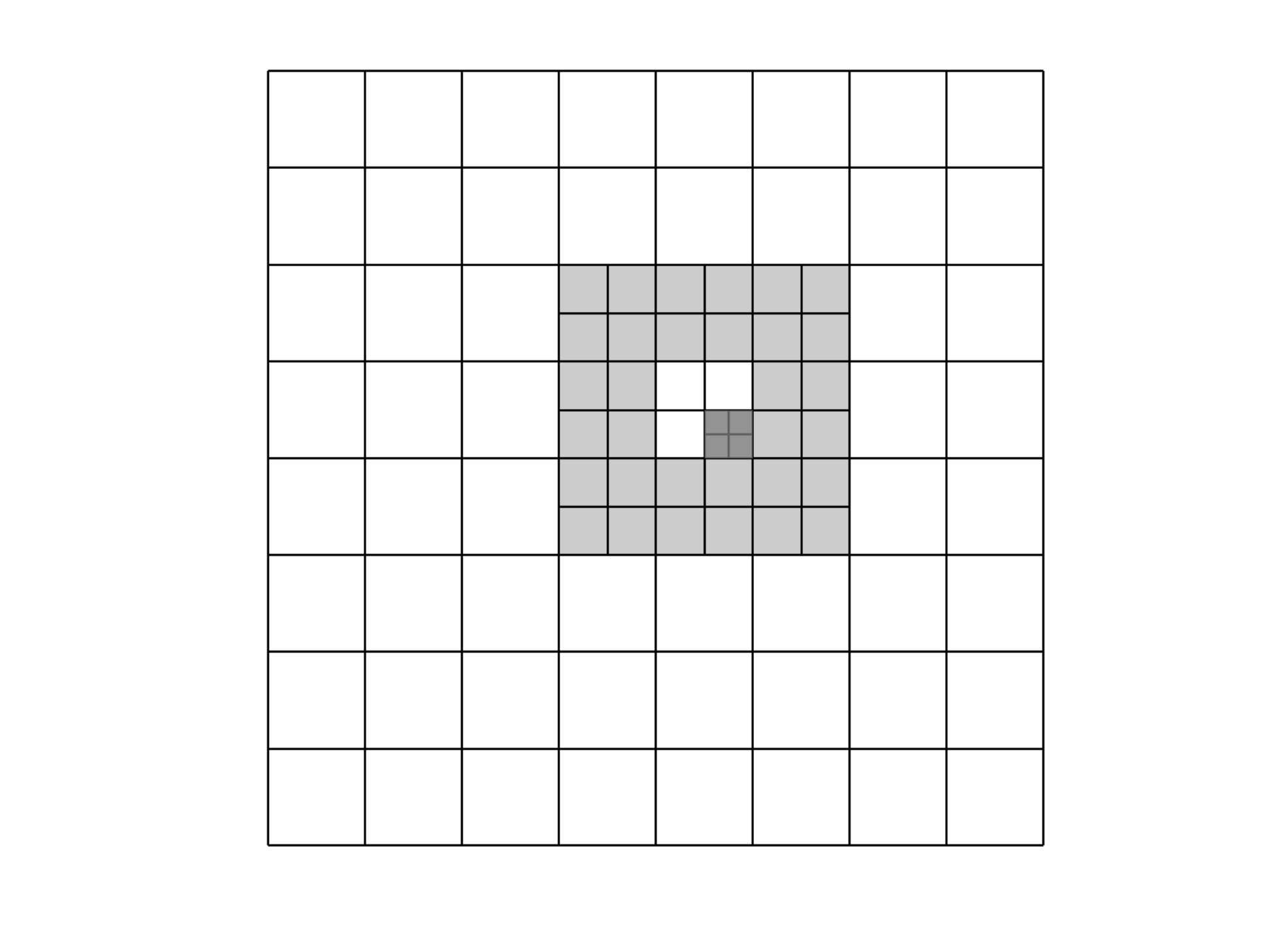}\hspace*{-.75cm}
\includegraphics[scale=0.195]{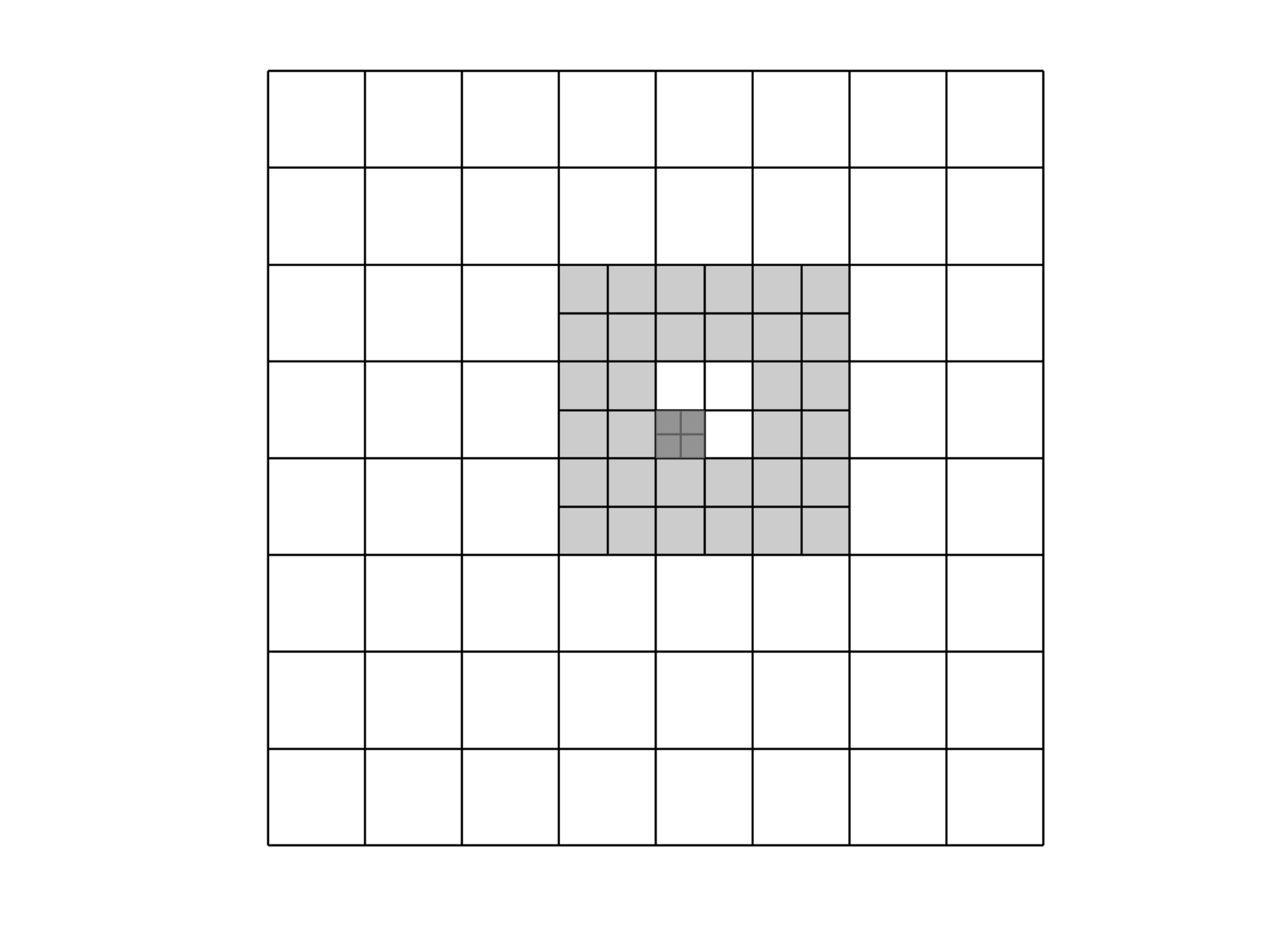}}\\
\hspace*{-.75cm}
\subfigure[$(p_1,p_2)=(3,3)$ ]{
\includegraphics[scale=0.195]{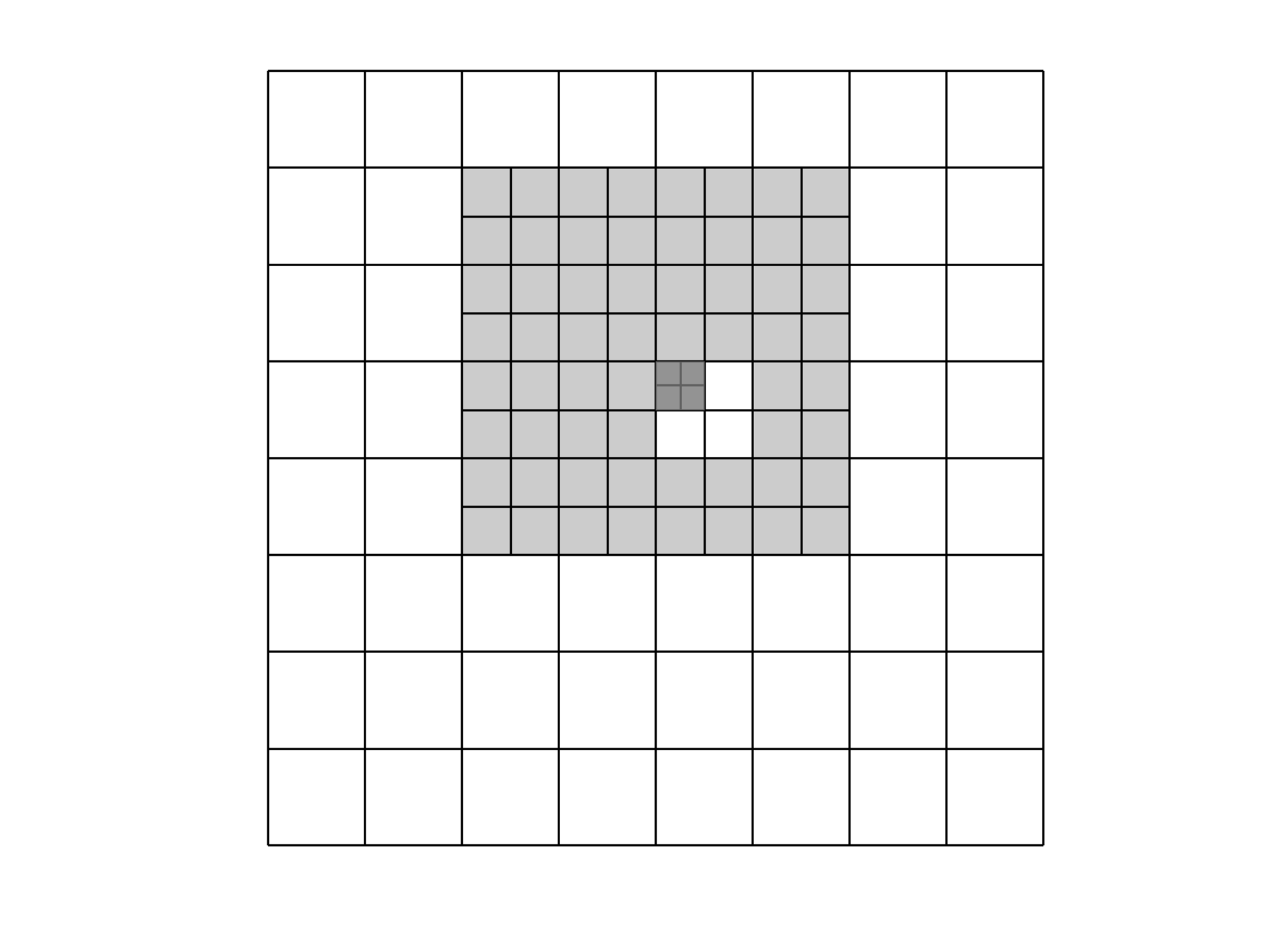}\hspace*{-.75cm}
\includegraphics[scale=0.195]{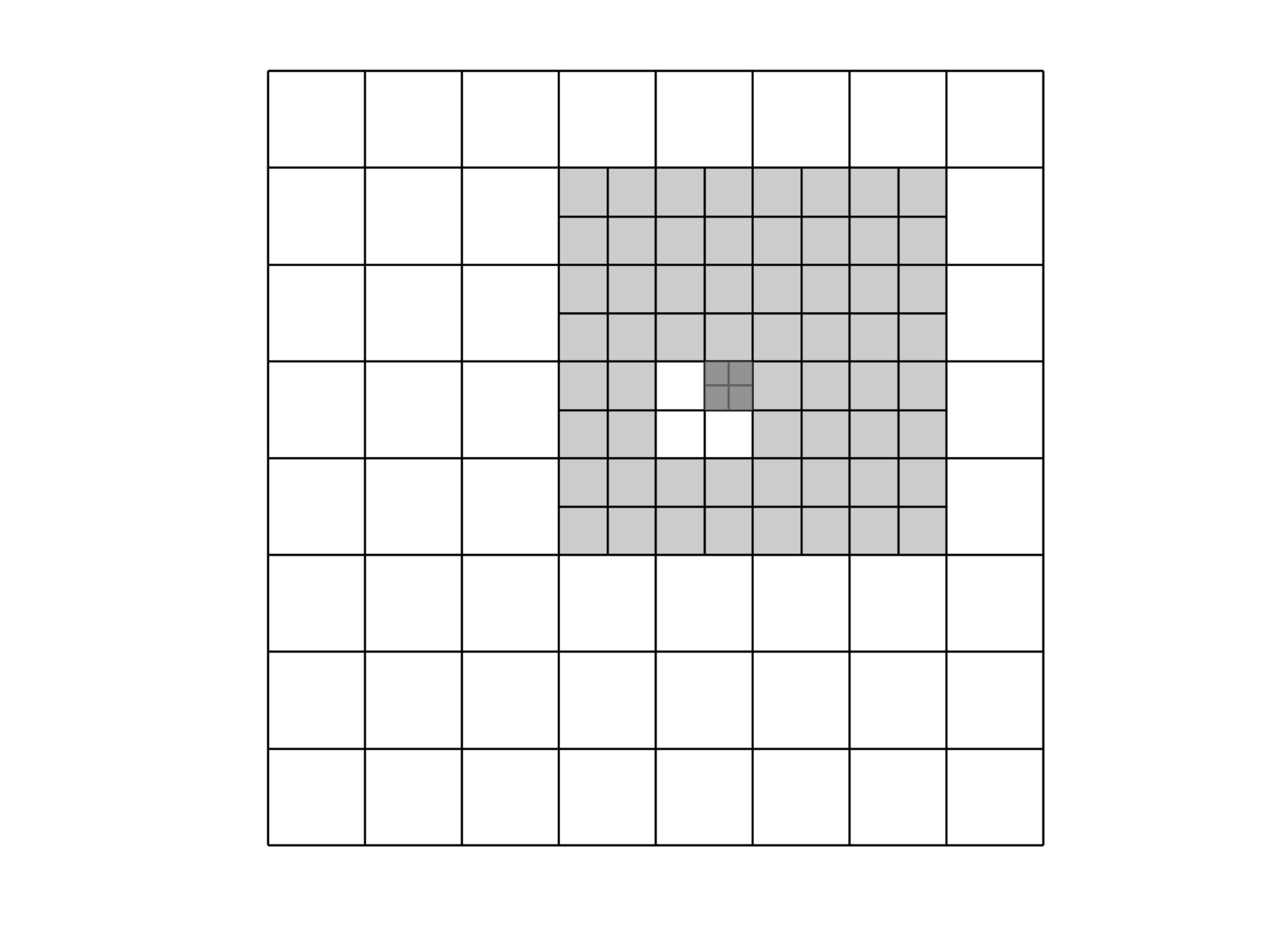}\hspace*{-.75cm}
\includegraphics[scale=0.195]{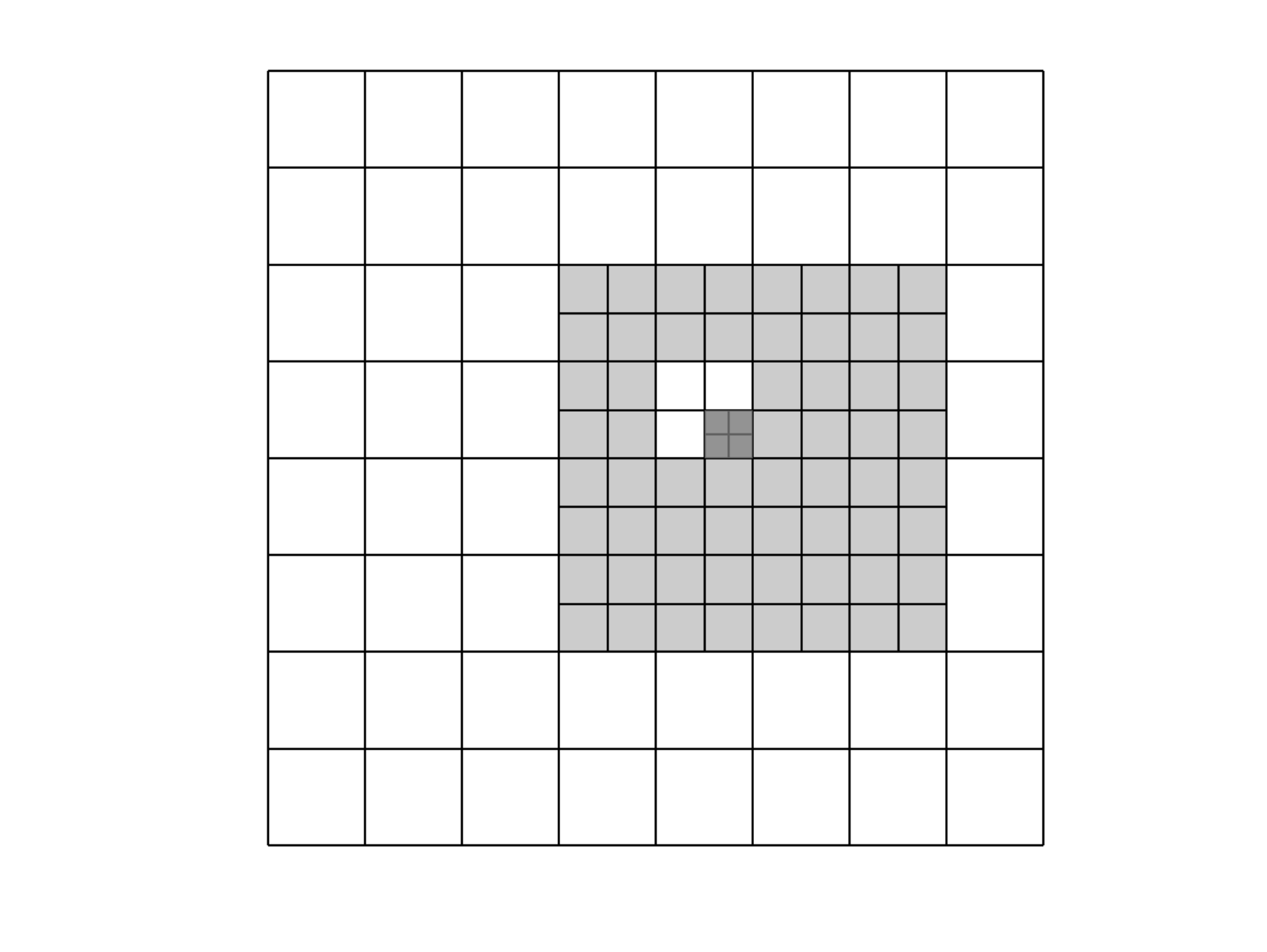}\hspace*{-.75cm}
\includegraphics[scale=0.195]{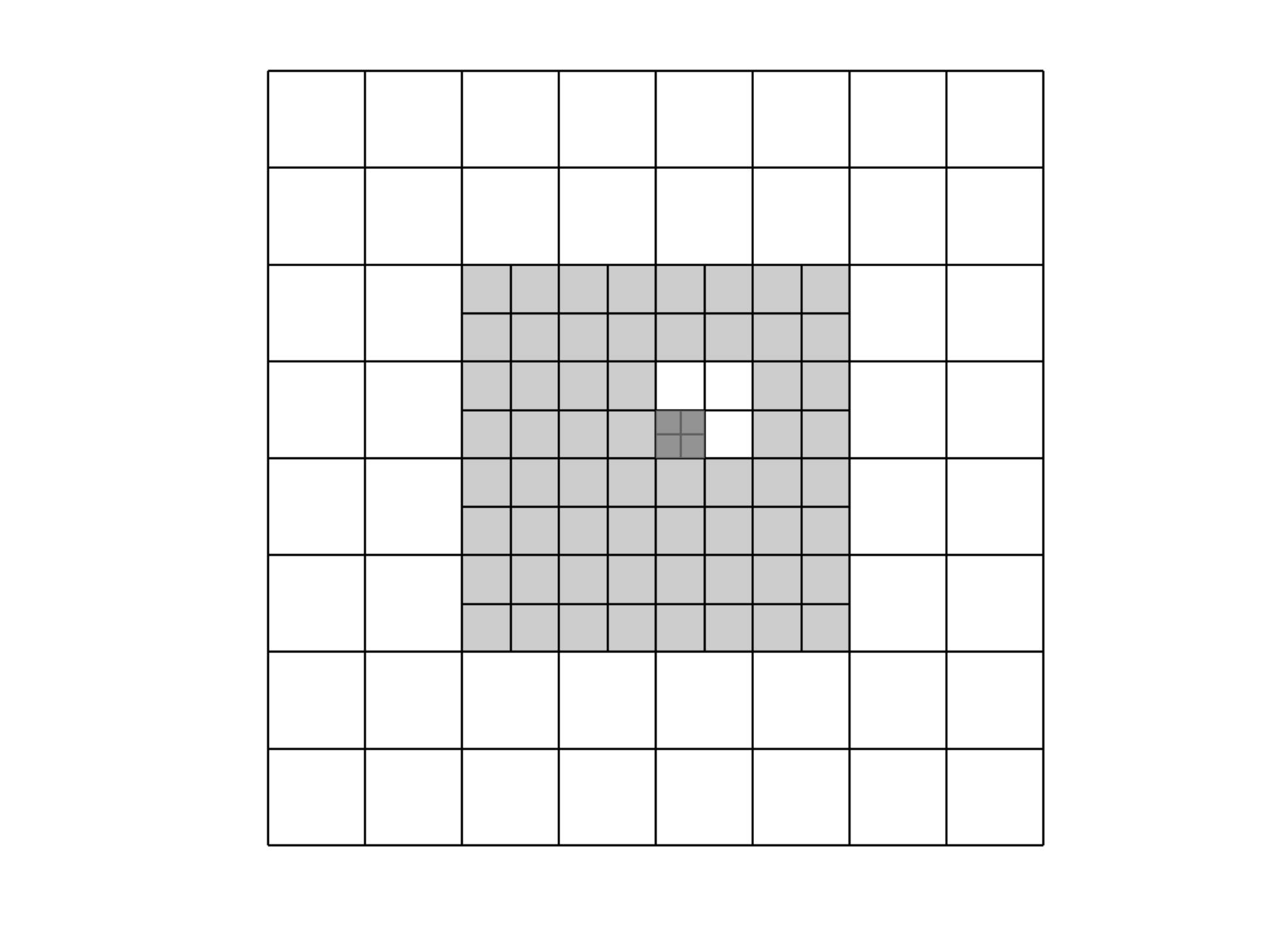}}\\
\hspace*{-.75cm}
\subfigure[$(p_1,p_2)=(4,4)$ ]{
\includegraphics[scale=0.195]{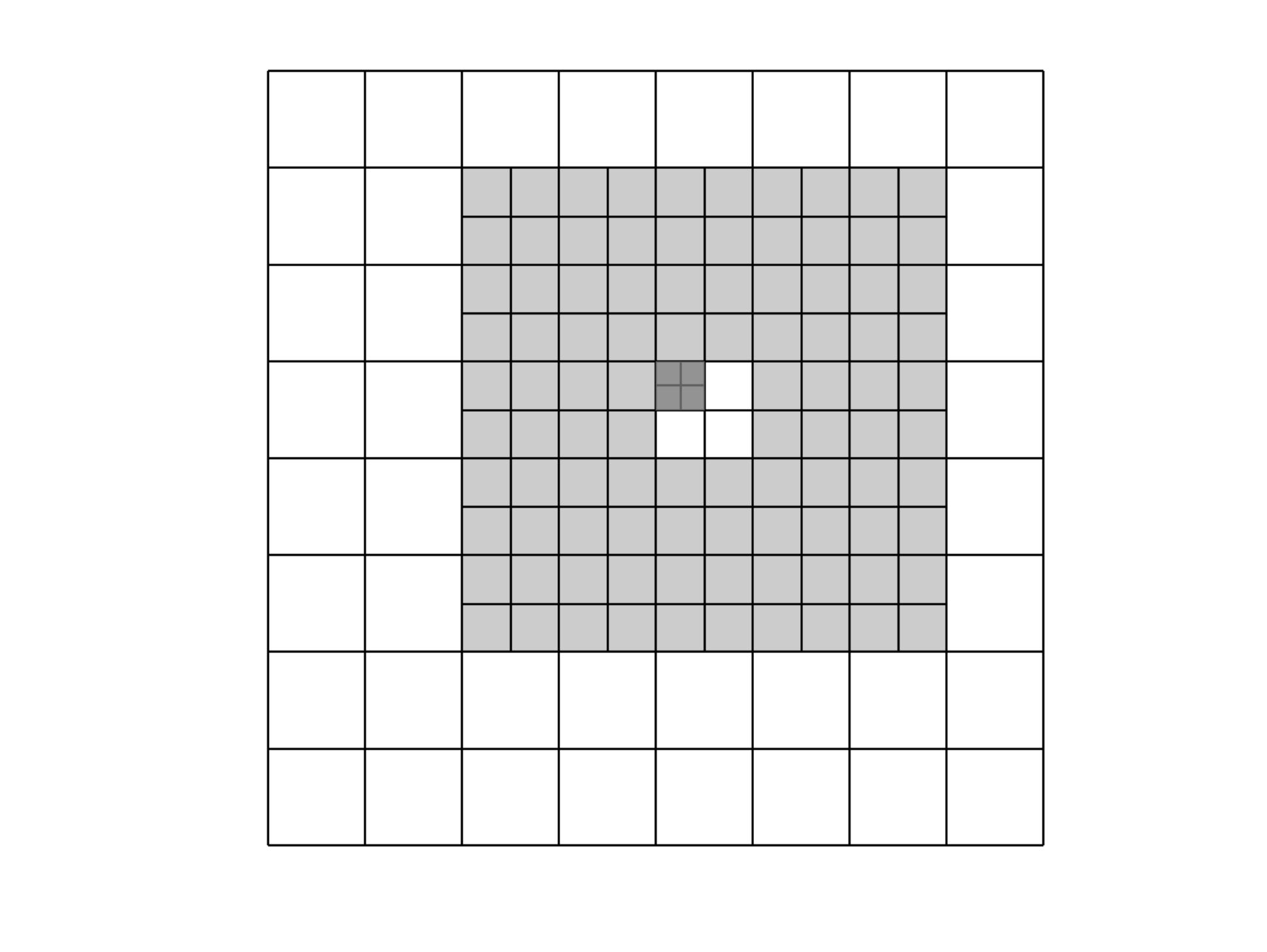}\hspace*{-.75cm}
\includegraphics[scale=0.195]{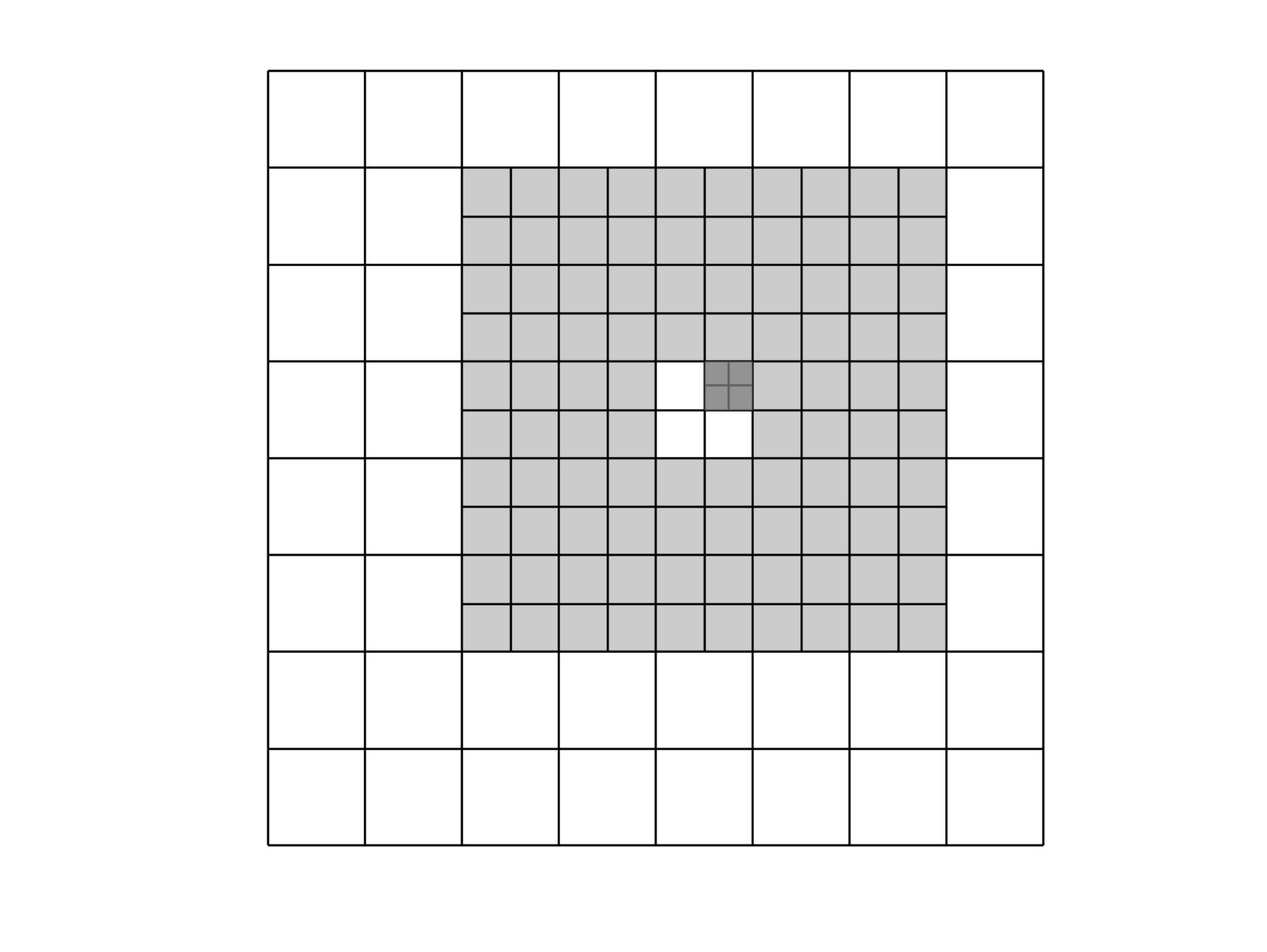}\hspace*{-.75cm}
\includegraphics[scale=0.195]{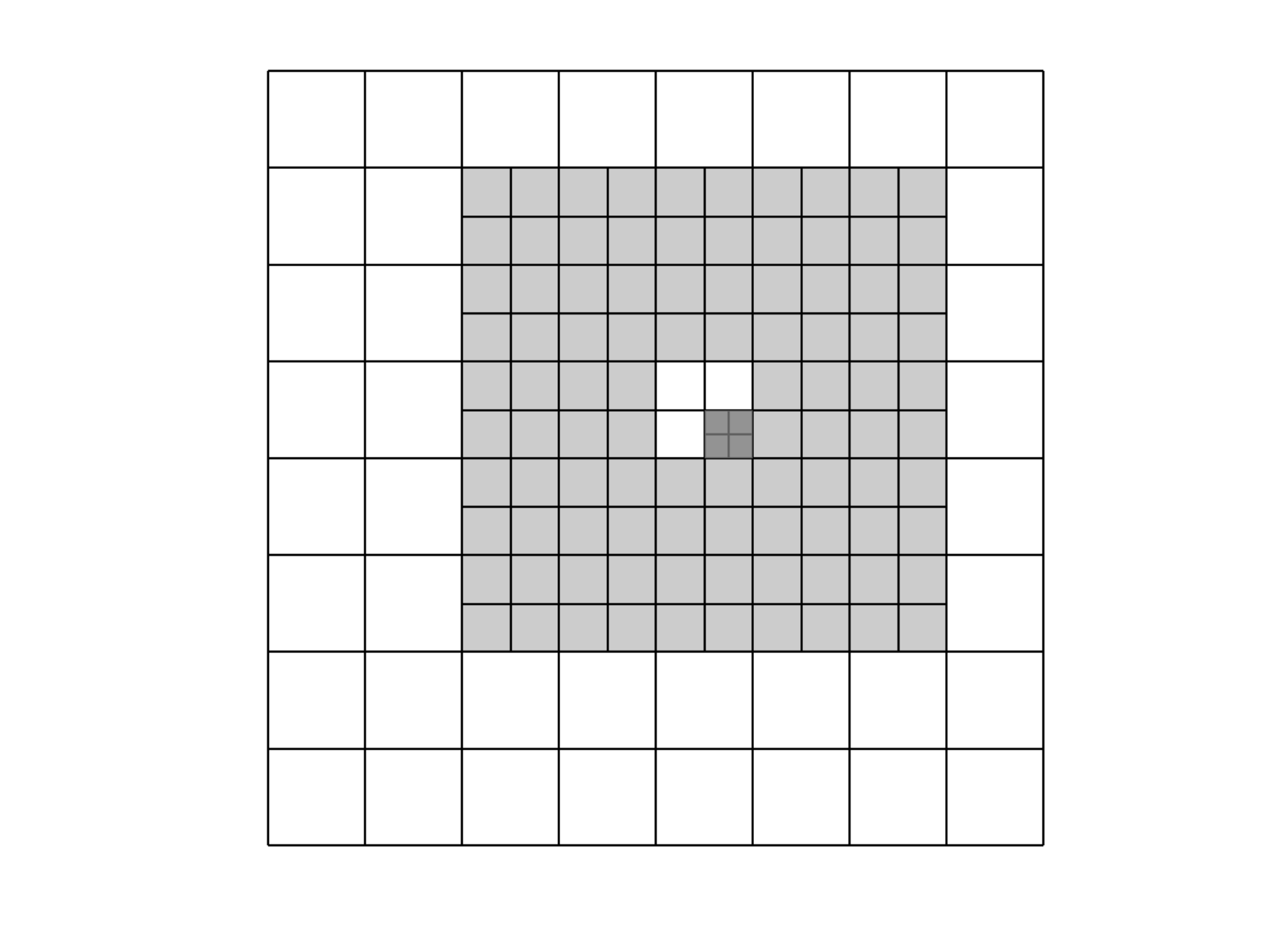}\hspace*{-.75cm}
\includegraphics[scale=0.195]{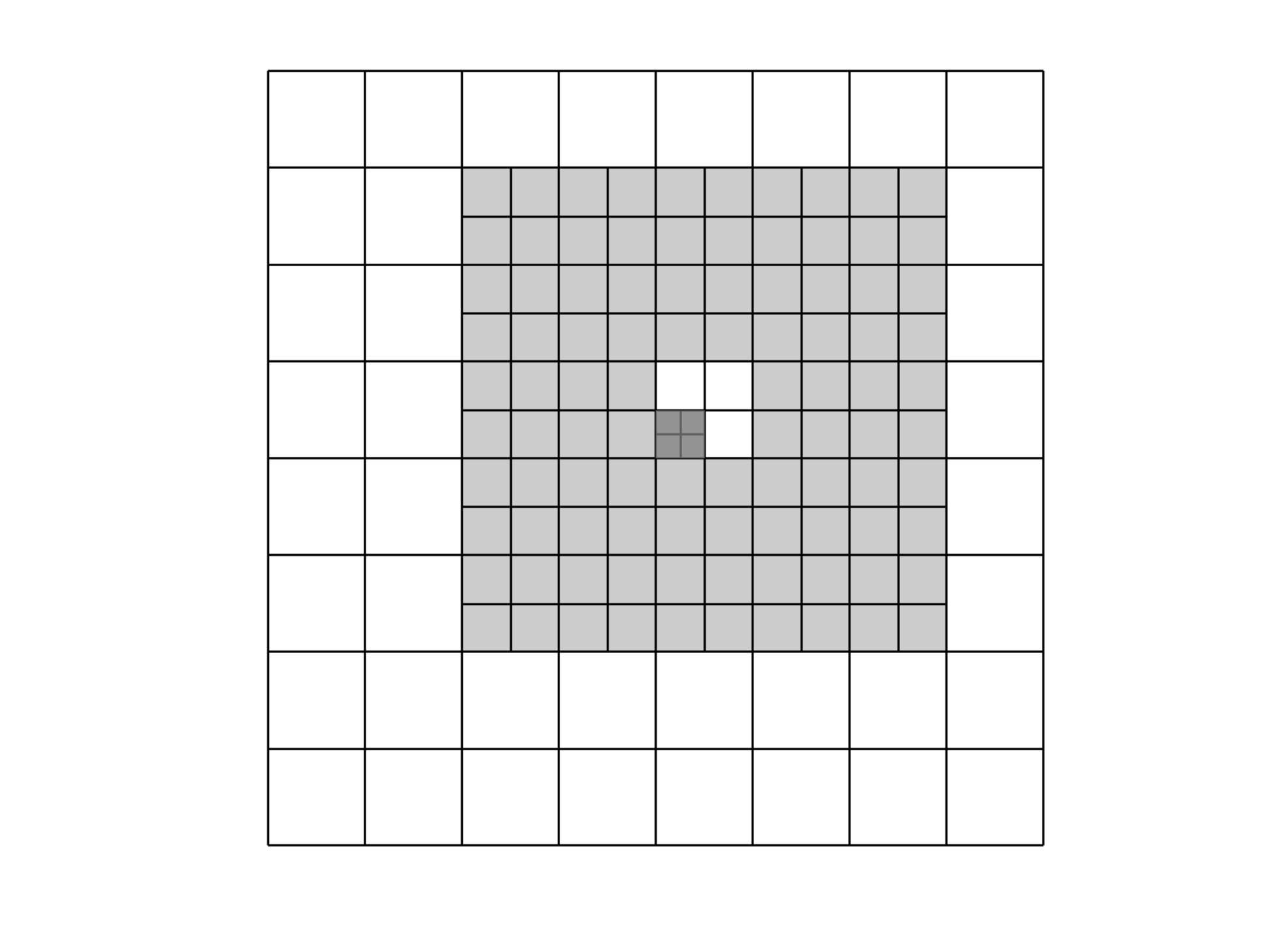}}
\caption{Neighborood ${\cal N}({\Rd \cQ,\,}Q,2)$ (light gray) of an element $Q$ ({represented by any of the four cells in dark grey}) when dyadic refinement is considered for some low degree cases. Note that the neighborood is always aligned with the grid lines of a previous hierarchical level.}
\label{fig:exm02}
 \end{center}\end{figure}

An automatic REFINE module which allows to define \emph{strictly} admissible meshes is presented in Figure~\ref{fig:refine}. The core of the refinement strategy relies on the internal recursive module REFINE$\_$RECURSIVE. {Any element $Q$ on which the recursive procedure is called will be subdivided into its children.} Lemma~\ref{lma:rr} and Proposition~\ref{prn:rr} below shows the distinguishing properties of this procedure.


\begin{figure}[ht!]\begin{center}\begin{footnotesize}
\begin{tabular}{l}
\hline \vspace*{-.35cm}\\
${\cal Q}^\star = $ REFINE(${\cal Q},{\cal M},m$)\\
\hline \vspace*{-.25cm}\\
for all $Q\in{\cal Q}\cap{\cal M}$\smallskip\\
\hspace*{.5cm} ${\cal Q} = $ REFINE$\_$RECURSIVE(${\cal Q},Q,m$)\smallskip\\
end\\
$\cQ^\star = \cQ$ \smallskip\\
\hline
\end{tabular}\hspace*{.5cm}
\begin{tabular}{l}
\hline \vspace*{-.35cm}\\
${\cal Q} = $ REFINE$\_$RECURSIVE($\cQ,Q,m$)\\
\hline \vspace*{-.25cm}\\
for all $Q' \in{\cal N}({\Rd \cQ,\,}Q,m)$\smallskip\\
\hspace*{.5cm}${\cal Q} = $ REFINE$\_$RECURSIVE(${\cal Q},Q',m$)\smallskip\\
end\smallskip\\
if  {\Rd$Q$} has not been subdivided\smallskip\B \\
\hspace*{.25cm} subdivide {\Rd$Q$} and \smallskip\\
\hspace*{.25cm} update $\cQ$ by replacing {\Rd$Q$} with its children\smallskip\\
end\\
\hline
\end{tabular}

\end{footnotesize}\end{center}
\caption{The REFINE and REFINE$\_$RECURSIVE modules.}\label{fig:refine}
\end{figure}

\begin{lma}\label{lma:rr}
(Recursive refinement) 
Let ${\cal Q}$ be a {strictly} admissible mesh of class $m$. The call to $\cQ^* =$ REFINE$\_$RECURSIVE(${\cal Q},Q,m$) terminates and returns a refined mesh $\cQ^*$ with elements that either were already active in ${\cal Q}$ or are obtained by single refinement of an element of $\cQ$.
\end{lma}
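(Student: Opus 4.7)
My plan is to prove termination and the structural claim together by strong induction on the level $\ell$ of the element $Q$ passed to REFINE$\_$RECURSIVE, with inductive hypothesis: the call terminates, and every subdivision carried out during it acts on an element of the input mesh $\cQ$, so that each element of the returned mesh either coincides with an element of $\cQ$ or is one of the $q^d$ children of a single dyadic split of some element of $\cQ$. The base case $\ell\le m-1$ is immediate: Definition~\ref{dfn:neigh} gives $\cN(\cQ,Q,m)=\emptyset$, the for-loop is vacuous, the \emph{if $Q$ has not been subdivided} guard succeeds, and $Q\in\cQ$ is split exactly once.

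For the inductive step $\ell\ge m$, the list $\cN(\cQ,Q,m)\subseteq\cG^{\ell-m+1}$ is a finite set of active elements of $\cQ$ strictly below level $\ell$ since $m\ge 2$; termination and the bound $\lceil\ell/(m-1)\rceil$ on recursion depth follow directly from this level descent. For the structural part I would apply the induction hypothesis to each inner recursive call and then track the evolution of the mesh through the loop. The key observation is that, throughout the whole top-level call, subdivisions only occur at the levels in the descending arithmetic progression $\ell, \ell-(m-1), \ell-2(m-1),\ldots$, while the children they generate live at the shifted progression $\ell+1, \ell-(m-1)+1,\ldots$, one step above the subdivision levels. Combined with the safeguard on $Q$, this level bookkeeping shows that the neighborhoods $\cN(\cQ^{(i)},Q'_i,m)$ computed along the loop only involve elements of the original $\cQ$ that are still active, so no recursive call ever fires on a newly created child and no element is subdivided twice.

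The main obstacle is precisely this bookkeeping across the sequential for-loop iterations, where each inner cascade modifies $\cQ$. Concretely, I need to verify that (a) each $Q'_i\in\cN(\cQ,Q,m)$ is still active when its turn in the loop starts, which holds because other cascades only touch strictly lower levels; (b) the neighborhoods arising inside the recursive calls never pick up a newly created child, which is the genuinely delicate point in the borderline class $m=2$ and rests on the level-disjointness observation above; and (c) $Q$ itself is still active when the final block is executed, which holds because no inner cascade ever reaches level $\ell$. Once (a)--(c) are in place, the final block subdivides $Q$ exactly once, the inductive step closes, and the returned $\cQ^*$ is of the claimed form.
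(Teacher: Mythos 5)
Your overall strategy --- termination by strict level descent along the recursion, and ``each element is split at most once'' via the \emph{if $Q$ has not been subdivided} guard --- is exactly the paper's argument; the paper's own proof is in fact terser and never even raises the issue of newly created children entering later neighborhoods, which you correctly single out as the only genuinely delicate point. The problem is that your resolution of that point does not close it in the one case where it bites. Your level bookkeeping says subdivisions occur at levels $\ell-i(m-1)$ and produce children at levels $\ell-i(m-1)+1$, while neighborhoods are scanned at levels $\ell-i'(m-1)$ with $i'\ge 1$; these two progressions are disjoint exactly when $(i-i')(m-1)=1$ has no integer solution, i.e.\ when $m\ge 3$. For $m=2$ --- the minimal admissible class and the one used in all of the paper's examples --- both progressions consist of \emph{all} levels below $\ell$, so a child created at level $k+1$ by splitting a level-$k$ element sits precisely at the level scanned by the neighborhood of a level-$(k+2)$ element processed later in the loop. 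Saying that the $m=2$ case ``rests on the level-disjointness observation'' is therefore circular: that observation is exactly what fails at $m=2$.

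What actually rules the scenario out at $m=2$ is strict admissibility of the initial mesh, which your argument never invokes. If $Q'\in\cG^{\ell-1}$ is still to be processed, then $Q'\subseteq\Omega^{\ell-1}\subseteq\omega^{\ell-2}$, so as a region $S(Q',\ell-1)\subseteq S(Q',\ell-2)\subseteq\Omega^{\ell-2}$; hence every cell $Q''$ eligible to place an element into $\cN(\cdot,Q',2)$ lies inside $\Omega^{\ell-2}$. On the other hand, a newly created level-$(\ell-2)$ element is a child of an element that was active at level $\ell-3$ in the original mesh, and such an element meets $\Omega^{\ell-2}$ at most along its boundary; it therefore cannot contain any such $Q''$, and the neighborhood never picks up a freshly created child. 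With this geometric separation folded into your induction, your points (a)--(c) close and the argument is complete; as written, step (b) is a genuine gap for $m=2$.
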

\begin{proof}
For every marked element $Q\in\cG^\ell\cap\cM$ the REFINE$\_$RECURSIVE routine is recursively called on any element of level $\ell'={\ell-m+1}$ that belongs to the neighborood of $Q$ with respect to $m$ while $\ell'$ is greater or equal than zero. Since at each recursive call the level $\ell'$ of interest  is strictly decreasing, the termination condition will be satisfied after a finite number of steps.
In addition, any element $Q$ touched by a call to REFINE$\_$RECURSIVE is subdived in its children only the first time it is reached in the return phase after the set of recursive calls. Every element of ${\cal Q}$ is then refined at most once in the refinement process that generates ${\cal Q}^*$ from ${\cal Q}$.
\end{proof}

By exploiting the truncation mechanism in the context of strictly admissible meshes --- see Definition~\ref{dfn:samesh} ---
it is possible to show that only the supports of truncated basis functions of level $\ell-m+1,\ell-m+2,\ldots,\ell$ will contain an element $Q\in{\cal G}^\ell$, for every refined mesh generated by the REFINE$\_$RECURSIVE module.

\begin{prn}\label{prn:rr} 
Let $\cQ$ be a strictly admissible mesh of class $m\ge 2$ and let $Q_\cM$ be an active element of level $\ell$, for some $0\le\ell\le N-1$. The call to $\cQ^* =$ REFINE$\_$RECURSIVE $(\cQ,Q_\cM,m)$ returns a strictly admissible mesh $\cQ^*\succeq\cQ$ of class $m$. 
\end{prn}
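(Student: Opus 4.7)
The plan is to proceed by strong induction on the level $\ell$ of $Q_\cM$, exploiting the fact already established in Lemma~\ref{lma:rr} that the recursion terminates and produces $\cQ^*\succeq\cQ$ with each element either unchanged or subdivided exactly once. The inductive claim is that for every strictly admissible mesh $\cQ$ of class $m$ and every active element of level at most $\ell$, the call to REFINE$\_$RECURSIVE returns a strictly admissible mesh of class $m$.

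For the base case $\ell-m+1\le 0$, the neighborhood $\cN(\cQ,Q_\cM,m)$ is empty by Definition~\ref{dfn:neigh}, no recursive calls occur, and the routine simply subdivides $Q_\cM$; this adds $\overline{\hat{Q}}_\cM$ to $\hat{\Omega}^{\ell+1}$ and leaves all other subdomains unchanged. The conditions in Definition~\ref{dfn:samesh} at levels $\ell''\ne \ell+1$ are preserved by monotonicity of $\hat{\omega}^k$ in $\hat{\Omega}^k$, so only the case $\ell+1\ge m$ requires a dedicated check, which I would derive from the strict admissibility of the starting mesh together with the containment $\hat{Q}_\cM\subseteq\hat{\Omega}^{\ell}$.

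For the inductive step I would first observe that every $Q'\in\cN(\cQ,Q_\cM,m)$ lives at level $\ell-m+1<\ell$, so by the induction hypothesis each recursive call preserves strict admissibility; after the loop the intermediate mesh $\widetilde{\cQ}\succeq\cQ$ is therefore still strictly admissible of class $m$. The final subdivision of $Q_\cM$ (if it has not already been performed) enlarges only $\hat{\Omega}^{\ell+1}$ by $\overline{\hat{Q}}_\cM$, so by the same monotonicity argument the only new requirement is $\overline{\hat{Q}}_\cM\subseteq\hat{\omega}^{\ell-m+2}$ in the updated mesh, and using $S(\hat{P},\ell-m+2)\subseteq S(\hat{Q}_\cM,\ell-m+2)$ for every level-$(\ell-m+2)$ cell $\hat{P}\subseteq\hat{Q}_\cM$ this reduces to verifying
\[
S(\hat{Q}_\cM,\ell-m+2)\subseteq\hat{\Omega}^{\ell-m+2}_{\mathrm{new}}.
\]

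The heart of the argument is then the following dichotomy applied to an arbitrary $\hat{Q}''\in S(\hat{Q}_\cM,\ell-m+2)$: the strict admissibility of $\cQ$ at level $\ell$ yields $S(\hat{Q}_\cM,\ell-m+1)\subseteq\hat{\Omega}^{\ell-m+1}$, and since the region covered by $S(\hat{Q}_\cM,\ell-m+2)$ sits inside the region covered by $S(\hat{Q}_\cM,\ell-m+1)$ under nested tensor-product refinement, one has $\hat{Q}''\subseteq\hat{\Omega}^{\ell-m+1}$; hence either $\hat{Q}''$ already belongs to $\hat{\Omega}^{\ell-m+2}_{\mathrm{old}}$, or it is contained in some active cell $\hat{Q}'\in\hat{\cG}^{\ell-m+1}$. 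In the latter case $Q'$ satisfies the defining property of $\cN(\cQ,Q_\cM,m)$ and was therefore subdivided during the recursive phase, giving $\hat{Q}''\subseteq\overline{\hat{Q}}'\subseteq\hat{\Omega}^{\ell-m+2}_{\mathrm{new}}$. I expect the hardest step to be the geometric containment relating the two support extensions across consecutive levels, since it is the one place where the specific structure of the nested B-spline supports enters the argument; once this is in place, the rest is careful bookkeeping of how the subdomains $\hat{\Omega}^k$ evolve along the recursion.
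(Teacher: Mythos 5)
Your argument is sound and it rests on the same geometric facts as the paper's proof --- monotonicity of $\hat{\omega}^{k}$ in $\hat{\Omega}^{k}$, the nesting $S(\hat{Q},k+1)\subseteq S(\hat{Q},k)$ of support extensions across consecutive levels, and the fact that refining the neighbourhood forces $S(\hat{Q}_\cM,\ell-m+2)$ into the enlarged $\hat{\Omega}^{\ell-m+2}$ --- but it organizes them quite differently. The paper verifies condition \eqref{eq:sameshes} \emph{a posteriori} on the final mesh: it takes an arbitrary active $Q^*\in\cG^{\ell,*}$, distinguishes $Q^*\subseteq\Omega^{\ell}$ (handled by monotonicity) from $Q^*\subseteq\Omega^{\ell,*}\setminus\Omega^{\ell}$ (traced back to the parent $Q^{\#}_{\cM}\in\cG^{\ell-1}$ whose subdivision triggered the refinement of its neighbourhood), whereas you run a forward induction on the call tree and maintain strict admissibility as an invariant after every recursive call. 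Your formulation is arguably closer to what Corollary~\ref{crl:refine} actually uses, since its proof invokes Proposition~\ref{prn:rr} precisely as the statement that admissibility holds after each recursive call.

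Two places need more care. First, your induction hypothesis quantifies over \emph{active} elements, but the loop over $\cN(\cQ,Q_\cM,m)$ may re-invoke REFINE$\_$RECURSIVE on an element $Q'$ that an earlier iteration has already subdivided. This is harmless --- once the (admissible) intermediate mesh contains the subdivided $Q'$, one has $S(Q',\ell'-m+2)\subseteq\Omega^{\ell'-m+2}$, so every level-$(\ell'-m+1)$ cell meeting that region is no longer active and the recomputed neighbourhood of $Q'$ is empty, making the redundant call a no-op --- but the observation must be made explicit, since otherwise the inductive hypothesis does not apply. Second, your base case at $\ell=m-1$ (the first level at which subdivision creates cells of a level, $\ell+1=m$, where \eqref{eq:sameshes} bites) does not follow from strict admissibility of $\cQ$ together with $\hat{Q}_\cM\subseteq\hat{\Omega}^{\ell}$ alone: one would need $S(\hat{Q}_\cM,1)\subseteq\hat{\Omega}^{1}$ for an active level-$(m-1)$ cell, and \eqref{eq:sameshes} gives no control at level $m-1$. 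This is not a defect specific to your proof: the paper's own argument at $\ell=m$ writes down a nonempty level-$0$ neighbourhood for $Q^{\#}_{\cM}\in\cG^{m-1}$ even though Definition~\ref{dfn:neigh} declares the neighbourhood empty when $\ell-m+1\le 0$. Both proofs are repaired by reading the neighbourhood formula as valid for $\ell-m+1\ge 0$, so that level-$0$ elements are also refined; you should state which convention you adopt rather than deferring the check.
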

\begin{proof}
Let $\Omega^{0}\supseteq\ldots\supseteq\Omega^{N-1}\supseteq\Omega^{N}$, with $\Omega^N=\emptyset$, be the domain hierarchy associated to mesh ${\cal Q}$.
The refined mesh $\cQ^*$ = REFINE$\_$RECURSIVE $(\cQ,Q_\cM,m)$ contains active elements $Q^*\in\cG^{\ell,*}$ with respect to the domain hierarchy $\Omega^{0,*}\supseteq\ldots\supseteq\Omega^{N-1,*}\supseteq\Omega^{N,*}$ where 
\begin{equation}\label{eq:nested}
\Omega^{0,*}\equiv\Omega^{0} 
\quad\text{and}\quad 
\Omega^{\ell,*}\supseteq\Omega^{\ell},
\end{equation} 
for $\ell=1,\ldots,N$. Note that the maximum level of refinement in $\cQ^*$ is necessarily $N$ according to Lemma~\ref{lma:rr}.\\
Let $Q^*\in\cG^{\ell,*}$ be an active element of $\cQ^*$, then $Q^*\subseteq\Omega^{\ell,*}\setminus\Omega^{\ell+1,*}$, for some $0\le\ell\le N$. We have two possibilities: either $Q^*$ belongs also to $\Omega^\ell$ or not.
\begin{itemize}
\item If $Q^*\subseteq\Omega^\ell$ then $0\le\ell\le N-1$. Since the initial mesh $\cQ$ is strictly admissible of class $m$, we have: $\Omega^\ell\subseteq\omega^{\ell-m+1}$, namely $Q^*\subseteq\omega^{\ell-m+1}$. Now, the refined subdomain hierarchy is a nested enlargement of the original one according to \eqref{eq:nested}, and, consequently, $\omega^{\ell-m+1}\subseteq\omega^{\ell-m+1,*}$, which implies $Q^*\subseteq\omega^{\ell-m+1,*}$.
\item If $Q^*\subseteq\Omega^{\ell,*}\setminus\Omega^\ell$, then Lemma~\ref{lma:rr} guarantees that $Q^*$ has been obtained by applying a single refinement to an element of $\cQ$. Hence, there exists $Q_{\cM}^{\#}\in\cG^{\ell-1}$ so that $Q_{\cM}^{\#}\supseteq Q^*$. Condition \ref{eq:sameshes} on $\cQ$ implies 
\[
Q_{\cM}^{\#} 
{\Rd \,\subset\, {\omega}^{\ell-m}} = {\Rd \bigcup}
\left\{
{\Rd \overline{Q}\,:\,}
Q\in G^{\ell-m}
{\Rd \,\wedge\,}
S({\Rd Q},\ell-m)\subseteq
{\Omega}^{\ell-m}\right\}
\]
and, consequently,
\begin{equation}\label{eq:intermediate}
S({Q}_{\cM}^{\#},\ell-m)\subseteq
\Omega^{\ell-m}.
\end{equation}
Since $Q_{\cM}^{\#}$ is an active element of $\cQ$ that has been subdivided in the refinement process from $\cQ$ to $\cQ^*$, the REFINE$\_$RECURSIVE module has been called over this element. More precisely, the call REFINE$\_$RECURSIVE $(\cQ^{\#},Q_{\cM}^{\#},m)$ belongs to the chain of recursive calls activated by REFINE$\_$RECURSIVE $(\cQ,Q_{\cM},m)$ for some intermediate mesh $\cQ^{\#}$ so that $\cQ^*\succeq\cQ^{\#}\succeq\cQ$. This mean that {\Rd the} recursive routine has been called on any $Q'\in\cN(\cQ^{\#},Q_{\cM}^{\#},m)$ with
\[
\cN(\cQ^{\#},Q_{\cM}^{\#},m) 
= \left\{Q'\in{\cG}^{\ell-m,\#}:
{\Rd \exists\, Q'' } \in
S({Q}_{\cM}^{\#},\ell-m+1) 
{\Rd , Q''\subseteq Q'}\right\}.
\]
By combinining \eqref{eq:intermediate} with $S({Q}_{\cM}^{\#},\ell-m+1)\subseteq S({Q}_{\cM}^{\#},\ell-m)$, we obtain $S({Q}_{\cM}^{\#},\ell-m+1)\subseteq 
{\Omega}^{\ell-m}$. Hence, the coarsest elements in $S({Q}_{\cM}^{\#},\ell-m+1)$ are exactly the ones of level $\ell-m$. All these $Q'$ elements of level $\ell-m$ have been subdivided into their children of level $\ell-m+1$ in the refinement step from $\cQ^{\#}$ to $\cQ^*$ in order to guarantee that
\[
S({Q}_{\cM}^{\#},\ell-m+1)\subseteq
\Omega^{\ell-m+1,*}.
\]
Then $Q^*\subseteq\omega^{\ell-m+1,*}$.
\end{itemize}
{In both cases, $Q^*\subseteq\omega^{\ell-m+1,*}$ implies $\widehat{Q}^*\subseteq\widehat{\omega}^{\ell-m+1,*}$. Condition \eqref{eq:sameshes} is then satisfied.}
\end{proof}

The previous results guarantees that the strict class of admissibility of the mesh is preserved by the REFINE$\_$RECURSIVE module. This result extends to the REFINE procedure.

\begin{crl}\label{crl:refine}
Let $\cQ$ be a strictly admissible mesh of class $m\ge 2$ and $\cM$ the set of elements of $\cQ$ marked for refinement. The call to $\cQ^* =$ REFINE $(\cQ,\cM,m)$ terminates and returns a  strictly admissible mesh $\cQ^*\succeq\cQ$ of class $m$. 
\end{crl}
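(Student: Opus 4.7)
The plan is to prove the corollary by finite induction on the elements of the marked set $\cM$. I would enumerate $\cQ\cap\cM = \{Q_1, \ldots, Q_k\}$ in the order processed by the outer \texttt{for} loop of REFINE, set $\cQ_0 := \cQ$ and let $\cQ_i := {}$REFINE$\_$RECURSIVE$(\cQ_{i-1}, Q_i, m)$ for $i = 1, \ldots, k$, so that $\cQ^* = \cQ_k$. The goal is then to show that each intermediate mesh $\cQ_i$ is strictly admissible of class $m$ and satisfies $\cQ_i \succeq \cQ$.

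The base case $i=0$ holds by hypothesis. For the inductive step, I would assume $\cQ_{i-1}$ is strictly admissible of class $m$ with $\cQ_{i-1} \succeq \cQ$. Applying Lemma~\ref{lma:rr} to the call producing $\cQ_i$ ensures both its termination and that $\cQ_i \succeq \cQ_{i-1}$; transitivity of the refinement relation then gives $\cQ_i \succeq \cQ$. Proposition~\ref{prn:rr} in turn delivers strict admissibility of class $m$ for $\cQ_i$. Since $\cM$ is finite and each inner call terminates, the outer loop of REFINE terminates after $k$ iterations and $\cQ^* = \cQ_k$ carries both properties, proving the claim.

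The main technical point I expect to justify carefully is the applicability of Proposition~\ref{prn:rr} at an iteration $i$ in which $Q_i$ is no longer active in $\cQ_{i-1}$. This can happen when $Q_i$ was already subdivided as part of the neighborhood expansion triggered by some previously processed $Q_j$ with $j<i$. In that scenario the terminal \texttt{if} clause in REFINE$\_$RECURSIVE prevents a second subdivision of $Q_i$, and the only further effect of the call is the recursive enforcement of the class-$m$ neighborhood condition around $Q_i$ — which, by the inductive hypothesis on $\cQ_{i-1}$, is already satisfied, so any additional recursive refinement encountered in this branch falls within the scope of the argument used to prove Proposition~\ref{prn:rr} and preserves the strict admissibility invariant. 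Once this observation is in place, the induction closes and the corollary follows.
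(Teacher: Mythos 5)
Your proof is correct and follows essentially the same route as the paper's: termination from Lemma~\ref{lma:rr} and preservation of strict admissibility from Proposition~\ref{prn:rr}, applied once per marked element and closed by a finite induction over $\cM$. Your explicit treatment of the case where $Q_i$ is no longer active in $\cQ_{i-1}$ (having been subdivided during the neighborhood expansion of an earlier marked element) addresses a point the paper's one-line argument passes over silently, and your resolution --- that the remaining recursive calls are issued on active elements of $\cN(\cQ_{i-1},Q_i,m)$ and are therefore each covered by Proposition~\ref{prn:rr}, while the final subdivision step is skipped --- is sound.
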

\begin{proof} 
The termination of the REFINE module is directly implied by Lemma ~\ref{lma:rr}. 
Since every marked element $Q$ activates a call to REFINE$\_$RECURSIVE (${\cal Q},Q,m$), in order to prove that the final refined mesh ${\cal Q}^*$ preserves the satisfation of \eqref{eq:sameshes} and, consequently, the class $m$ of admissibility of ${\cal Q}$, it is sufficient to prove that this property holds after every recursive call. This is guaranteed by Proposition~\ref{prn:rr}.
\end{proof}

In view of the above corollary, we know that the refine mesh $\cQ^*$ preserves the class of admissibility of the initial mesh $\cQ$ and, consequently, Corollaries~\ref{crl:thbcor1} and \ref{crl:thbcor2} hold.

\begin{exm}\label{exm02}
An example for the case $m=2$  {and the identity map} is shown Figure~\ref{fig:exm03}.
\end{exm}

\begin{figure}[ht!]\begin{center}
\hspace*{-.75cm}
\subfigure[initial mesh]{
\includegraphics[scale=0.25]{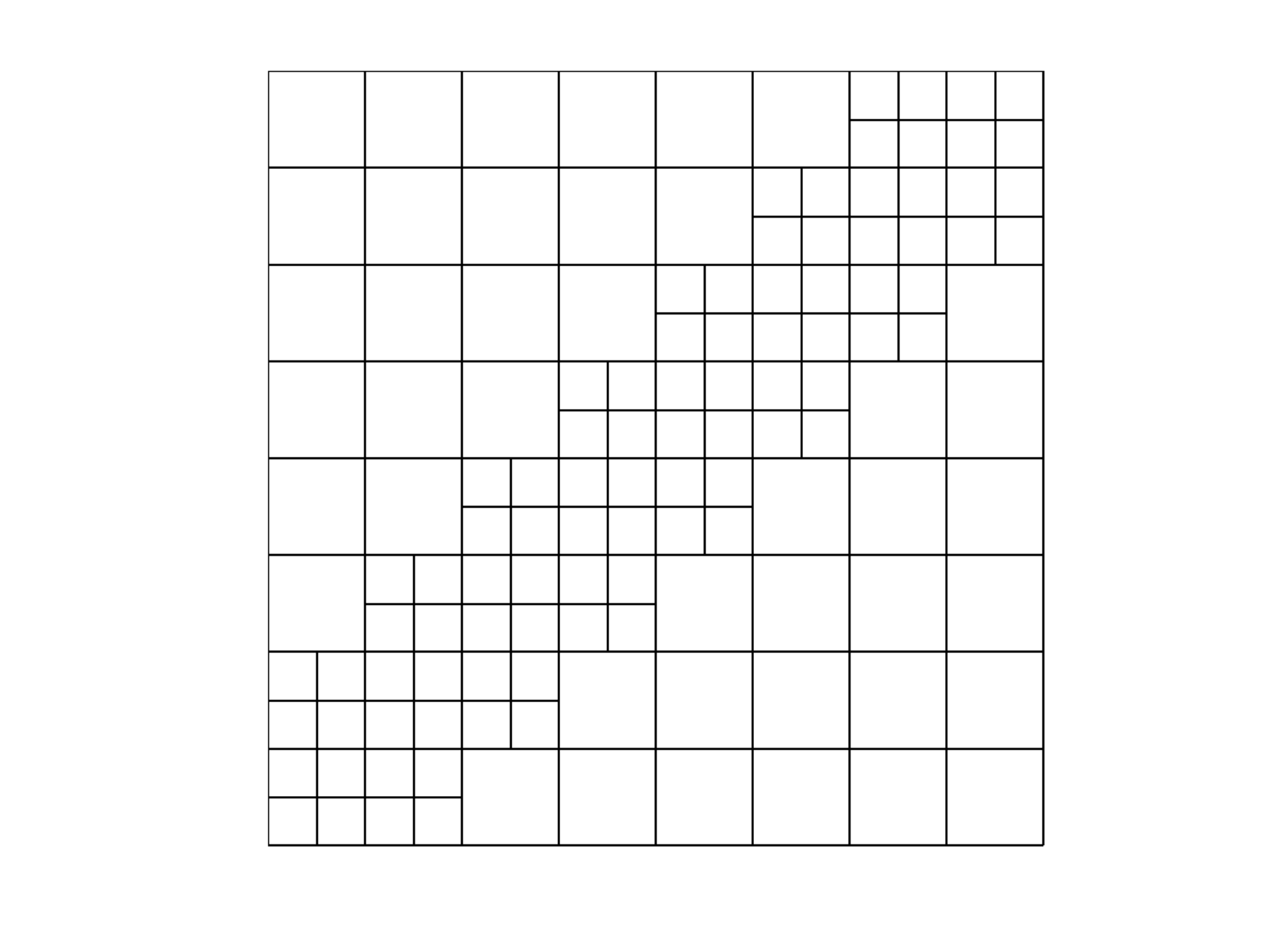}}
\hspace*{-.75cm}
\subfigure[marked elements]{
\includegraphics[scale=0.25]{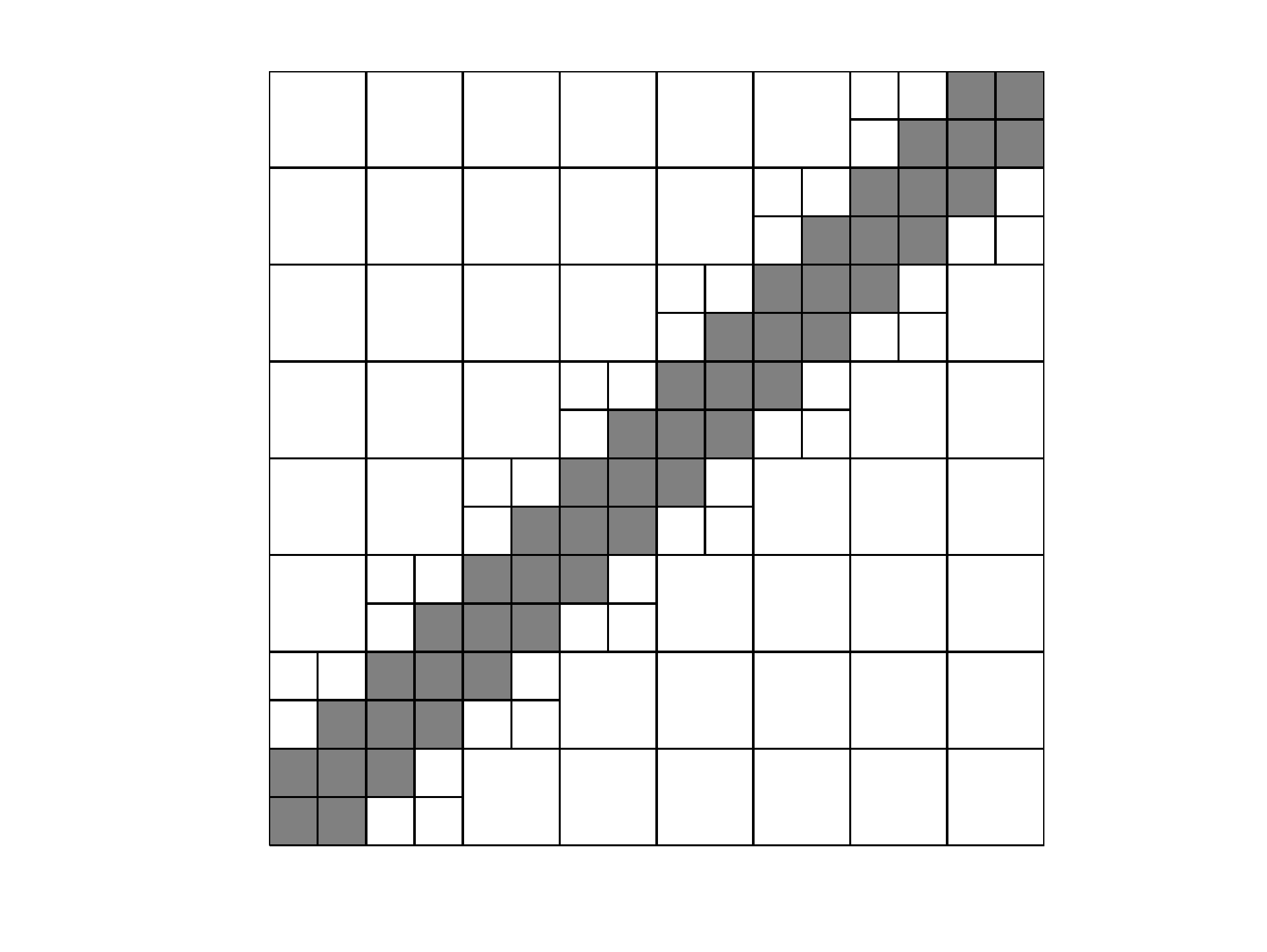}}
\caption{The admissible meshes in Fig.~\ref{fig:exm01} are generated by the call to $\cQ^* =$ REFINE $(\cQ,\cM,2)$ to the mesh $\cQ$ depicted in (a) with the marked set $\cM$  of elements shown in (b).}
\label{fig:exm03}
\end{center}\end{figure}
\subsection{Contraction of the quasi-error and convergence}

Following the approach by \cite{cascon2008} (see also \cite{NochettoCIME}), we can  prove 
the contraction of the \emph{quasi-error}, defined as the contribution given by the energy error together with the estimator scaled by a positve factor $\gamma$:
\[
|||u-U|||_{\Omega}^2
+
\gamma\,\varepsilon^2_{\cQ}(U,\cQ)
\]
where, we remind  the energy norm is just  $||| \cdot |||_\Omega =  a(\cdot,\cdot)^{1/2}$ as defined in Section \ref{sec:solve}.
Note that neither the energy error $|||u-U|||_{\Omega}$, nor the estimator $\varepsilon_{\cQ}(U,\cQ)$  considered alone may satisfy a similar contraction property between two consecutive steps of the adaptive procedure in the general setting \cite{NochettoCIME}. 

In  the case of adaptive finite elements, monotonicity of the error is proved only under additional assumptions, see e.g., \cite{morin2001} and \cite{morin2002}, and indeed, we will not study this property in the present paper.

{We present here only the statement of the contraction theorem. Since its proof follows the analogous one for finite elements with only minor changes, we postpone it to the Appendix.}

\begin{thm}\label{thm:contraction}
Let $\theta\in (0,1]$ be the D\"orfler marking parameter introduced in 
\eqref{eq:dm}, and let $\{\cQ_k,\SS(\cQ_k),U_k\}_{k\ge0}$ be the sequence
of strictly admissible meshes, hierarchical spline spaces, and discrete solution computed
by the adaptive procedure for the model problem \eqref{eq:mp}.
Then, there exist $\gamma>0$ and $0<\alpha<1$, independent of $k$ such that for all $k>0$ it holds:
\begin{equation}\label{eq:contraction}
|||u-U_{k+1}|||_{\Omega}^2+\gamma\,\varepsilon^2_{\cQ_{k+1}}(U_{k+1},\cQ_{k+1})\le
\alpha^2\left[|||u-U_k|||_{\Omega}^2
+\gamma\,\varepsilon^2_{\cQ_k}(U_{k},\cQ_k)\right].
\end{equation}
\end{thm}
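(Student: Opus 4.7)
The plan is to adapt the quasi-error contraction argument familiar from adaptive finite elements (cf.~\cite{cascon2008,NochettoCIME}) to the hierarchical spline setting. The four ingredients are: (i) Galerkin orthogonality in the energy norm; (ii) an estimator reduction on refined elements; (iii) a Lipschitz-type stability of $\varepsilon_{\cQ}(\cdot,\cQ)$ with respect to its discrete argument; and (iv) the combined use of D\"orfler marking \eqref{eq:dm} and the reliability bound \eqref{eq:ub}.

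Since $\cQ_{k+1}\succeq\cQ_k$ yields $\SS_D(\cQ_k)\subset\SS_D(\cQ_{k+1})$, Galerkin orthogonality produces the Pythagoras identity $|||u-U_{k+1}|||_\Omega^2=|||u-U_k|||_\Omega^2-|||U_{k+1}-U_k|||_\Omega^2$. For the estimator reduction, I would observe that on every $Q\in\cQ_k$ subdivided into $q^d$ dyadic children $Q'\subset Q$ the residual $r_k=f-\div(\bA\nabla U_k)$ is unchanged while $h_{Q'}^2=q^{-2}h_Q^2$; summing element-wise and using that $\cM_k$ is contained in the set of elements actually refined (by Corollary~\ref{crl:refine}), together with \eqref{eq:dm}, gives
$$\varepsilon^2_{\cQ_{k+1}}(U_k,\cQ_{k+1})\le\bigl[1-(1-q^{-2})\theta^2\bigr]\,\varepsilon^2_{\cQ_k}(U_k,\cQ_k).$$

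I would then establish a Lipschitz stability of the estimator in its discrete argument,
$$\bigl|\varepsilon_{\cQ_{k+1}}(V,\cQ_{k+1})-\varepsilon_{\cQ_{k+1}}(W,\cQ_{k+1})\bigr|\lesssim|||V-W|||_\Omega\qquad\forall\,V,W\in\SS_D(\cQ_{k+1}),$$
via the triangle inequality in $L^2$ and an element-wise inverse estimate of the form $h_Q\|\div(\bA\nabla Z)\|_{L^2(Q)}\lesssim\|\nabla Z\|_{L^2(Q)^d}$ for discrete $Z$. Combining this with the estimator reduction through Young's inequality, and choosing the coupling parameter $\gamma>0$ so that the $|||U_{k+1}-U_k|||^2$ contributions cancel against those produced by Pythagoras, leads to
$$|||u-U_{k+1}|||_\Omega^2+\gamma\,\varepsilon^2_{\cQ_{k+1}}(U_{k+1})\le|||u-U_k|||_\Omega^2+\gamma\rho\,\varepsilon^2_{\cQ_k}(U_k),$$
with $\rho=(1+\delta)[1-(1-q^{-2})\theta^2]<1$ for $\delta$ sufficiently small. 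The contraction \eqref{eq:contraction} then follows by invoking reliability \eqref{eq:ub} (together with the equivalence between energy and $\VV$-norms) to dominate a fraction of $|||u-U_k|||^2$ by estimator mass, converting the surplus into the $\gamma\varepsilon^2_{\cQ_k}$ budget and producing some $\alpha<1$.

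The only step that genuinely requires the hierarchical-spline machinery is the Lipschitz stability: in contrast with the $C^0$ finite element case, the inverse estimate has to be carried out on THB-spline functions and its constant must be independent of the depth $N$ of the hierarchy. This is exactly what admissibility of class $m$ delivers through Corollaries~\ref{crl:thbcor1}--\ref{crl:thbcor2}, which bound uniformly in $N$ both the number of basis functions overlapping each element and the ratio $|\supp\tau|/|Q|$. The strict admissibility of $\cQ_{k+1}$ secured by Corollary~\ref{crl:refine} is therefore not a stylistic convenience but a structural prerequisite for the contraction argument to close.
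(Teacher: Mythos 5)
Your proposal follows essentially the same route as the paper's appendix proof: Pythagoras via Galerkin orthogonality, element-wise estimator reduction on refined cells, Lipschitz stability of the indicator through an element-local inverse estimate, Young's inequality with the coupling constant $\gamma$ chosen to cancel the $|||U_{k+1}-U_k|||_\Omega^2$ contributions, and finally D\"orfler marking combined with the reliability bound \eqref{eq:ub} to produce $\alpha<1$. The only caveats are cosmetic: the exact reduction $h_{Q'}^2=q^{-2}h_Q^2$ holds in the parametric domain, whereas in the physical domain the bi-Lipschitz map $\bF$ only yields $h_{Q'}\le c(\bF)\,h_Q$ with $c(\bF)<1$ (which is all the argument needs), and the uniform-in-$N$ constants supplied by admissibility are really consumed by the reliability constant $C_{\mathrm{up}}$ at the last step rather than by the purely element-local polynomial inverse estimate in the Lipschitz lemma.
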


An immediate consequence of this theorem, it the convergence of the error and of the estimator: 

\begin{crl}
  \label{crl:conv}
Under the same assumption of Theorem \ref{thm:contraction}, both the error and the estimator converge geometrically to $0$. I.e., 
there exists { $\gamma>0$, $0<\alpha<1$} and a constant $M$ such that 
\[ |||u-U_{k+1}|||_{\Omega}+\gamma\,\varepsilon_{\cQ_{k+1}}(U_{k+1},\cQ_{k+1}) \leq M \alpha^{k}, \]
where $M$ depends  on the bounds \eqref{eq:5} and \eqref{eq:Fbound}, but not on $k$. 
\end{crl}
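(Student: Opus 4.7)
The plan is to derive Corollary \ref{crl:conv} as a direct iteration of Theorem \ref{thm:contraction}, followed by passing from the squared quasi-error to the linear combination that appears in the statement, and finally bounding the initial quasi-error in terms of the problem data.

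First, by a straightforward induction on $k$, the contraction estimate \eqref{eq:contraction} gives
\begin{equation*}
|||u-U_{k}|||_{\Omega}^2+\gamma\,\varepsilon^2_{\cQ_{k}}(U_{k},\cQ_{k})
\le \alpha^{2k}\bigl(|||u-U_0|||_{\Omega}^2+\gamma\,\varepsilon^2_{\cQ_0}(U_0,\cQ_0)\bigr)
=: \alpha^{2k}\,E_0.
\end{equation*}
Next I would use the elementary inequality $a+b\le \sqrt{2}\sqrt{a^2+b^2}$ for non-negative reals, applied to $a = |||u-U_k|||_\Omega$ and $b = \sqrt{\gamma}\,\varepsilon_{\cQ_k}(U_k,\cQ_k)$, which yields
\begin{equation*}
|||u-U_k|||_\Omega + \sqrt{\gamma}\,\varepsilon_{\cQ_k}(U_k,\cQ_k) \le \sqrt{2 E_0}\,\alpha^k.
\end{equation*}
Absorbing the factor $\max(1, 1/\sqrt{\gamma})$ into the constant, this implies the stated bound $|||u-U_k|||_\Omega + \gamma\,\varepsilon_{\cQ_k}(U_k,\cQ_k) \le M\alpha^k$ (up to a harmless shift of the index by one, as in the statement).

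It only remains to verify that $E_0$ can be bounded by a constant depending solely on the coercivity/continuity constants \eqref{eq:5}, the geometric bounds \eqref{eq:Fbound}, the data $f$, and the fixed initial mesh $\cQ_0$, but not on $k$. For the energy-error part, Galerkin orthogonality gives $|||u-U_0|||_\Omega \le |||u|||_\Omega$, and coercivity together with \eqref{eq:weak} yields $|||u|||_\Omega^2 = a(u,u) = \langle f,u\rangle \le \|f\|_{\VV^*}\|u\|_\VV \le \alpha_1^{-1}\|f\|_{\VV^*}^2$. For the estimator part, the efficiency bound of Theorem \ref{thm:lb}, summed over $\cQ_0$, gives
\begin{equation*}
\varepsilon_{\cQ_0}(U_0,\cQ_0) \lesssim |||u-U_0|||_\Omega + \osc_{\cQ_0}(U_0,\cQ_0),
\end{equation*}
and the oscillation term is controlled by $h_{\max}\|r\|_{L^2(\Omega)}$, which in turn is bounded by $\|f\|_{L^2(\Omega)}$ plus a finite-dimensional norm of $U_0$ (itself bounded via the discrete problem \eqref{eq:gal} and the constants in \eqref{eq:5}, \eqref{eq:Fbound}). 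Thus $E_0$ is under control, so we may take $M := \sqrt{2E_0}\,\max(1,1/\sqrt{\gamma})/\alpha$.

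The only non-routine point is the final bookkeeping for $M$: one must be careful that all quantities entering the bound depend only on the fixed data $(\bA,f,\bF,\cQ_0,\gamma,\alpha,\theta)$, but since $\cQ_0$ and the contraction parameters from Theorem \ref{thm:contraction} are fixed a priori and independent of $k$, this causes no difficulty. The main ``obstacle'' is really bookkeeping rather than a genuine obstruction; all the analytic work has been done in Theorem \ref{thm:contraction} itself.
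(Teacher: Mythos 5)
Your proof is correct and takes the same (and only natural) route as the paper, which states Corollary \ref{crl:conv} as an immediate consequence of Theorem \ref{thm:contraction}: iterate the contraction of the squared quasi-error, pass to the linear combination via $a+b\le\sqrt{2}\sqrt{a^2+b^2}$, and bound the initial quasi-error by the data. The only blemishes are two harmless bookkeeping slips --- the conversion factor should be $\max(1,\sqrt{\gamma})$ rather than $\max(1,1/\sqrt{\gamma})$, and the index shift from $k+1$ to $k$ contributes a factor $\alpha$ rather than $1/\alpha$ --- neither of which matters since $\gamma$ and $\alpha$ are fixed constants (and your detour through Theorem \ref{thm:lb} to control $\varepsilon_{\cQ_0}$ is sound but unnecessary, as that quantity is finite by inspection).
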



\begin{rmk}\label{rmk:mark}
The questions related to convergence for other marking strategies remain open and may require additional assumptions on the refinement module which should be further investigated. On the other hand, it seems very plausible that any other error estimator verifying the upper bound provided by Theorem ~\ref{thm:ub} could be used to replace the simple residual based error indicator  proposed in this paper. As a side remark, we also note that the proof of Theorem~\ref{thm:contraction} does not require the lower bound presented in Theorem~\ref{thm:lb}. 
\end{rmk}
\section{Closure}
\label{sec:closure}
A posteriori residual-type estimators for the error associated to the Galerkin approximation of a simple model problem have been presented, based on the truncated basis for hierarchical splines with respect to some class of admissible meshes and a certain multilevel refinement. In the case of the upper bound, two key properties of the basis are exploited together with standard inequalities of (adaptive) finite element methods. First, the partition of unity property and, second, the bound for the number of basis functions that assume non-zero value on any mesh element. In the case of the lower bound, classical arguments of finite element estimates can be directly applied. By taking into account the a posteriori upper bound previously computed (ESTIMATE) and a classical marking strategy (MARK), we introduce a specific refinement procedure (REFINE) to proof the contraction of the quasi-error and, consequently, the convergence of the adaptive isogeometric methods.  

Corollary \ref{crl:conv} states convergence, but complexity is not analyzed at this stage. In other words, we have not proven any connection between the error and the number of degrees of freedom that are needed to compute the iterate $U^k$. As it is known from the AFEM theory, this  can be studied by analyzing the complexity of REFINE. In order to do this, we need to understand how the refinement module controls the interplay between the number of refined elements $\# \cQ_k-\# \cQ_0$ introduced up to step $k$ (that influences the degrees of freedom added during the refinement) and the total number of marked elements. Among other things, an estimate of the type: there exists a certain constant $\Lambda_0>0$ such that
\[ \# \cQ_k - \# \cQ_0 \leq \Lambda_0  \sum_{j=0}^{k-1} \# \cM_j\]
is in need. This kind of complexity estimate has been derived for adaptive finite elements in \cite{binev2004a,stevenson2007} for two- and three-dimensional problems, respectively.  We will prove an analogous estimate for the adaptive isogeometric method here introduced, together with optimal convergence rates, in  the companion paper \cite{buffa2015b}.


 Suitable extensions of our adaptive framework may be investigated in order to consider less restrictive mesh configurations. For example, the use  of analysis-suitable T-splines combined with semi--structured hierarchical construction has been recently investigated \cite{scott2014a,evans2015}. The possibility of extending the adaptivity theory here presented to this case is a challenging issue, but the wide modeling capabilities of T-splines encapsulated into the hierarchical model would provide a powerful refine module.\B
\section*{Acknowledgment}
This work was supported by the Gruppo Nazionale per il Calcolo Scientifico 
(GNCS) of the Istituto Nazionale di Alta Matematica (INdAM) 
and by the project DREAMS (MIUR ``Futuro in Ricerca'' RBFR13FBI3).
\appendix
\section*{Appendix}
\label{sec:appendix}
This appendix is devoted to the proof of Theorem \ref{thm:contraction}. Here we basically reproduce the proof of the statement as it was first proved for finite elements. In our presentation, we closely follow Chapter 5 of \cite{NochettoCIME}. We are not adding something new, and the value of this appendix is only show that the same arguments used for finite elements apply also to our case with very minor changes. Indeed, some of the proofs are made easier by the fact that our error indicator does not contain jump terms. 

Before proving Theorem \ref{thm:contraction}, we need a few preparatory lemmas.

This first Lemma is nothing else then the Pytaghoras theorem. See Lemma 12 in \cite{NochettoCIME}.

\begin{lma}\label{lma:pyt}
Let $\cQ$ be an admissible mesh and $\cQ^*$ be a refinement of $\cQ$, i.e., $ {\cal Q^*}\succeq{\cal Q}$. Let $U$ and $U^*$ be the Galerkin solution of problem \eqref{eq:gal} on $\mathbb{S}_D(\cQ)$ and  $\mathbb{S}_D(\cQ^*)$, respectively. It holds:
\begin{equation}\label{eq:pyt}
|||u-U^*|||_{\Omega}^2 = |||u-U|||_{\Omega}^2-|||U^*-U|||_{\Omega}^2.
\end{equation}
\end{lma}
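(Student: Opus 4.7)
The plan is a textbook Galerkin-orthogonality argument; the only hierarchical-splines-specific point is to verify that $\mathbb{S}_D(\cQ)\subseteq \mathbb{S}_D(\cQ^*)$ when $\cQ^*\succeq\cQ$.

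First, I would establish nestedness of the discrete spaces. Since $\cQ^*\succeq\cQ$ means (via pullback) that $\hat{\cQ}^*$ is obtained from $\hat{\cQ}$ by $q$-adic subdivision of some subset of elements, the associated domain hierarchies satisfy $\hat{\Omega}^{\ell,*}\supseteq\hat{\Omega}^\ell$ for every $\ell$. Together with property (v) listed in Section~\ref{sec:pro} (the span of a THB-spline basis over a subdomain hierarchy is contained in the span of the truncated basis over a nested enlargement of that hierarchy) this gives $\myspan\hat{\cT}(\hat{\cQ})\subseteq\myspan\hat{\cT}(\hat{\cQ}^*)$, i.e.\ $\SS(\cQ)\subseteq\SS(\cQ^*)$ after pullback through $\bF$, and hence $\SS_D(\cQ)\subseteq\SS_D(\cQ^*)$ since the boundary condition is inherited.

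Next, I would invoke Galerkin orthogonality. Because $U^*\in\SS_D(\cQ^*)$ solves \eqref{eq:gal} on the finer space and $U\in\SS_D(\cQ)\subseteq\SS_D(\cQ^*)$, the difference $V:=U^*-U$ lies in $\SS_D(\cQ^*)$, so that $a(u-U^*,V)=0$. Writing $u-U=(u-U^*)+(U^*-U)$ and expanding with respect to the bilinear form $a(\cdot,\cdot)$ yields
\begin{equation*}
|||u-U|||_\Omega^2 = |||u-U^*|||_\Omega^2 + 2\,a(u-U^*,U^*-U) + |||U^*-U|||_\Omega^2,
\end{equation*}
and the middle term vanishes by the orthogonality just established. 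Rearranging gives exactly \eqref{eq:pyt}.

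There is really no obstacle beyond the nestedness step: once $\SS_D(\cQ)\subseteq\SS_D(\cQ^*)$ is in hand, the remainder is the standard Pythagoras identity for Galerkin approximations in a symmetric, coercive bilinear form, and the assumptions \eqref{eq:coer}--\eqref{eq:cont} on $\bA$ guarantee that $a(\cdot,\cdot)$ is indeed a valid inner product inducing $|||\cdot|||_\Omega$. The admissibility class of the meshes plays no role here; it is used only through the guarantee (provided by Corollary~\ref{crl:refine}) that both $\cQ$ and $\cQ^*$ yield well-defined hierarchical spline spaces on which the Galerkin problem is solvable.
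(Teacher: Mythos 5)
Your proposal is correct and follows essentially the same route as the paper, which simply states that the identity is ``an immediate consequence of the Galerkin orthogonality''; you merely flesh out the two implicit steps (nestedness $\SS_D(\cQ)\subseteq\SS_D(\cQ^*)$ via property (v) of Section~\ref{sec:pro}, and the expansion of $|||u-U|||_\Omega^2$ with the cross term killed by orthogonality). No discrepancy with the paper's argument.
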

\begin{proof}
This is an immediate consequence of the Galerkin orthogonality.
\end{proof}

The next Lemma provides a measure of the reduction of the error indicator $\varepsilon_\cQ(U)$ with respect to the mesh $\cQ$,  
see Lemma 13 in \cite{NochettoCIME}.

\begin{lma}\label{lma:erred}
Let $\cQ$ be a stricty admissible mesh \B, $\cM$ be a set of marked elements and $\cQ^*$ the corresponding refined mesh. i.e., $\cQ^*= \mathrm{REFINE}(\cQ,\cM)$. 
 Then, for all $V\in \mathbb{S}_D(\cQ)$ it holds
$\forall V\in\mathbb{V}({\cal Q})$,
\begin{equation}\label{eq:erred}
\varepsilon_{{\cal Q}^*}^2(U,{\cal Q}^*)\le
\varepsilon_{{\cal Q}}^2(U,{\cal Q})
-\lambda\varepsilon_{{\cal Q}}^2(U,\cM)
\end{equation}
 where $0< \lambda <1$.  \B
\end{lma}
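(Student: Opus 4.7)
The plan is to carefully bookkeep how the element-wise residual indicator changes under the refinement step. Fix $V \in \mathbb{S}_D(\cQ) \subset \mathbb{S}_D(\cQ^*)$; because of the $C^1$ regularity assumption on the discrete space, the residual $r = f - \div(\bA \nabla V)$ is a fixed $L^2(\Omega)$ function, independent of whether we interpret $V$ as living in $\mathbb{S}_D(\cQ)$ or in $\mathbb{S}_D(\cQ^*)$. Thus the only quantity that actually changes when passing from $\varepsilon^2_\cQ(V,\cdot)$ to $\varepsilon^2_{\cQ^*}(V,\cdot)$ is the weight $h_Q^2$.

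I would first partition $\cQ^* = (\cQ \cap \cQ^*) \sqcup (\cQ^* \setminus \cQ)$, noting that $\cQ^* \setminus \cQ$ consists exactly of the children of the elements in $\cQ \setminus \cQ^*$. By Lemma~\ref{lma:rr}, each $Q \in \cQ \setminus \cQ^*$ is split only once into its $q^d$ children, each of side length $h_Q/q$ with $q \ge 2$. Hence for such a $Q$,
$$
\sum_{\substack{Q' \in \cQ^*\\ Q' \subset Q}} h_{Q'}^2 \,\|r\|^2_{L^2(Q')}
= \frac{h_Q^2}{q^2} \,\|r\|^2_{L^2(Q)}
= \frac{1}{q^2}\, \varepsilon^2_\cQ(V, Q),
$$
while for $Q \in \cQ \cap \cQ^*$ one trivially has $\varepsilon^2_{\cQ^*}(V,Q) = \varepsilon^2_\cQ(V,Q)$.

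Summing both contributions and then adding and subtracting the terms indexed by $\cQ \setminus \cQ^*$ to reconstitute the full $\cQ$-sum yields
$$
\varepsilon^2_{\cQ^*}(V,\cQ^*)
= \varepsilon^2_{\cQ}(V,\cQ)
- \bigl(1 - q^{-2}\bigr) \sum_{Q \in \cQ \setminus \cQ^*} \varepsilon^2_\cQ(V,Q).
$$
The final step is to relate this sum to the marked set. Every $Q \in \cM$ is effectively refined, since the call REFINE$\_$RECURSIVE$(\cQ, Q, m)$ terminates by splitting $Q$ itself (Lemma~\ref{lma:rr}); hence $\cM \subseteq \cQ \setminus \cQ^*$. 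Since all summands are non-negative, restricting the sum to $\cM$ only enlarges the right-hand side, giving
$$
\varepsilon^2_{\cQ^*}(V,\cQ^*) \le \varepsilon^2_{\cQ}(V,\cQ) - \lambda\, \varepsilon^2_\cQ(V,\cM),
$$
with $\lambda := 1 - q^{-2} \in (0,1)$ (and $\lambda = 3/4$ in the dyadic case).

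There is no real technical obstacle: the residual $r$ depends only on $V$ and not on the mesh (so no cross terms appear in passing from $\cQ$ to $\cQ^*$), and the only numerical ingredient is the geometric decrease of $h_Q^2$ under a single $q$-adic subdivision. The mild care needed is to observe that the REFINE routine may refine additional elements beyond $\cM$ (the neighbours forced by admissibility), but these extra contributions only strengthen the inequality and can safely be discarded.
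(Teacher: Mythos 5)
Your argument is essentially the paper's proof: you split $\cQ^*$ into unchanged elements and the children of refined ones, observe that $r$ depends only on $V$ and not on the mesh, exploit the decrease of the weight $h_Q$ under a single subdivision, and discard the non-negative contributions of the extra elements refined beyond $\cM$ (using that every marked element is indeed split, by Lemma~\ref{lma:rr}). The structure and the conclusion are correct.

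The one inaccuracy is the exact scaling $h_{Q'} = h_Q/q$ and the resulting identity
$\sum_{Q'\subset Q} h_{Q'}^2\|r\|^2_{L^2(Q')} = q^{-2}\,\varepsilon^2_\cQ(V,Q)$.
This holds for the \emph{parametric} elements, where $h_{\hat{Q}}=|\hat{Q}|^{1/d}$ and $q$-adic splitting divides the volume by exactly $q^d$; but the indicator is defined on the \emph{physical} mesh, with $h_Q=|Q|^{1/d}$ and $Q=\bF(\hat{Q})$, and the geometry map distorts volumes, so the children of $Q$ need not all have $h_{Q'}=h_Q/q$. The paper repairs this by invoking the bi-Lipschitz bounds \eqref{eq:Fbound} to obtain a uniform constant $c(\bF)<1$ with $h_{Q'}\le c(\bF)\,h_Q$, which turns your equality into an inequality and yields $\lambda = 1-c^2(\bF)$ rather than $1-q^{-2}$. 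Your value $\lambda=3/4$ is therefore only correct for the identity map; in general the reduction factor depends on $\bF$. With that single substitution your proof coincides with the one in the paper.
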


\begin{proof}
 For each $Q\in \cM$, we denote by $\cQ^*(Q)$ the collection of elements of $\cQ^*$ that are created by splitting $Q$. We know that, by construction,  for all $Q^\star\in   \cQ^*(Q)$, it holds  $h_{\hat{Q}^*} \leq \displaystyle \frac{1}{2} \; h_{\hat{Q}}$. Due to \eqref{eq:Fbound}, there exists then a constant $c(\bF)$, $c(\bF) < 1$,  independent of $Q$ such that 
$h_{Q^\star} \leq c(\bF) h_Q$. \B 
 
If we adopt the notation \[\varepsilon^2_{\cQ^*} (U,Q)= \sum_{Q^*\in  \cQ^*(Q)} h_{Q^*}^2 \| r(U) \|^2_{L^2(Q^*)},\]
 it clearly holds:
\[ \forall Q\in \cM \qquad  \varepsilon_{\cQ^*} (U,Q) \leq  c(\bF) \B \; \varepsilon_{\cQ} (U,Q). \]
Moreover, since the mesh size does not increase for all elements in $\cQ\setminus \cM$, we have:
\[   \forall Q\in \cQ\setminus \cM \qquad  \varepsilon_{\cQ^*} (U,Q) \leq  \varepsilon_{\cQ} (U,Q).\]
Summing up for all $Q\in \cQ$, we obtain:
\[   \varepsilon^2_{\cQ^*} (U,Q^*) \leq  \varepsilon^2_{\cQ} (U,\cQ\setminus\cM) +  c^2(\bF) \B \;   \varepsilon_{\cQ} (U,\cM)\]
which implies then \eqref{eq:erred}  with $\lambda = 1 - c^2(\bF)$.\B 
\end{proof}

We turn now to the Lipschitz property of the error indicator $\varepsilon_\cQ(U,Q)$, for any $Q$,  with respect to the trial function $U$, see Lemma 14 in \cite{NochettoCIME}.

\begin{lma}\label{lma:lip}
Let $\cQ$ be an admissible mesh and $V\,,\ W\in \mathbb{S}_D(\cQ)$.   There exists a $\Lambda>0$ such that the following holds for all $ Q\in{\cal Q}$
\begin{equation}\label{eq:lip}
|\varepsilon_{\cal Q}(V,Q)-\varepsilon_{\cal Q}(W,Q)|\leq \Lambda 
\eta_{\cal Q}(\bA,Q)||\nabla(V-W)||_{L^2(Q)},
\end{equation}
where  $\eta_{\cal Q}(\bA,Q)= h_Q \| \mathrm{div}(\bA)\|_{L^\infty(Q)} + \| \bA\|_{L^\infty(Q)}.$
\end{lma}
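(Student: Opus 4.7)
\begin{proofof}{Lemma~\ref{lma:lip} (sketch of the plan)}
The plan is to reduce everything to an estimate for the difference of residuals $r(V)-r(W)$ on the single element $Q$, and then bound that pointwise in $L^2(Q)$ by $\|\nabla(V-W)\|_{L^2(Q)}$ using an inverse inequality.

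First, I would apply the reverse triangle inequality for the $L^2(Q)$ norm inside the definition of $\varepsilon_\cQ$:
\begin{equation*}
|\varepsilon_{\cQ}(V,Q)-\varepsilon_{\cQ}(W,Q)| = h_Q\bigl|\|r(V)\|_{L^2(Q)}-\|r(W)\|_{L^2(Q)}\bigr|\le h_Q\|r(V)-r(W)\|_{L^2(Q)}.
\end{equation*}
Since $r(U)=f-\div(\bA\nabla U)$, the data term $f$ cancels and $r(V)-r(W)=-\div(\bA\nabla(V-W))$. Expanding the divergence row-wise one obtains
\begin{equation*}
\div(\bA\nabla(V-W))=\div(\bA)\cdot\nabla(V-W)+\bA:\nabla^2(V-W),
\end{equation*}
so that by H\"older
\begin{equation*}
\|r(V)-r(W)\|_{L^2(Q)}\le\|\div(\bA)\|_{L^\infty(Q)}\|\nabla(V-W)\|_{L^2(Q)}+\|\bA\|_{L^\infty(Q)}\|\nabla^2(V-W)\|_{L^2(Q)}.
\end{equation*}

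The next, and main, step is an inverse inequality of the form
\begin{equation*}
\|\nabla^2(V-W)\|_{L^2(Q)}\lesssim h_Q^{-1}\|\nabla(V-W)\|_{L^2(Q)}.
\end{equation*}
Since $V,W\in\SS_D(\cQ)$ and the assumption $\SS_D(\cQ)\subset C^1(\Omega)$ is in force, the pullback $(V-W)\circ\bF$ is, on the preimage $\widehat Q=\bF^{-1}(Q)$, a single tensor-product polynomial of fixed degree $\mathbf{p}$. On this reference element the inverse inequality is the classical one for polynomials of bounded degree, and it transports to the physical element $Q$ because $\bF$ is a bi-Lipschitz homeomorphism with constants $c_\bF,C_\bF$ independent of the level (see~\eqref{eq:Fbound}); equivalently, one uses $h_Q\simeq h_{\widehat Q}$ up to a constant depending only on $\bF$. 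It is important to note that only the local geometry of $Q$ matters here, so the constant in the inverse inequality is independent of $\cQ$, $N$, and $Q$ itself.

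Plugging the inverse inequality into the previous bound and multiplying by $h_Q$ yields
\begin{equation*}
h_Q\|r(V)-r(W)\|_{L^2(Q)}\lesssim \bigl(h_Q\|\div(\bA)\|_{L^\infty(Q)}+\|\bA\|_{L^\infty(Q)}\bigr)\|\nabla(V-W)\|_{L^2(Q)},
\end{equation*}
which is exactly \eqref{eq:lip} once the right-hand side is recognised as $\Lambda\,\eta_\cQ(\bA,Q)\|\nabla(V-W)\|_{L^2(Q)}$. The only delicate point is the inverse inequality and the uniformity of its constant across refinement levels; everything else is algebra on the residual and H\"older's inequality.
\end{proofof}
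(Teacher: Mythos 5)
Your argument is correct and follows essentially the same route as the paper: reduce to $h_Q\|r(V)-r(W)\|_{L^2(Q)}$, expand $\div(\bA\nabla(V-W))$ into the first-order term $\div(\bA)\cdot\nabla(V-W)$ plus $\bA:D^2(V-W)$, and absorb the Hessian via the inverse inequality $\|D^2(V-W)\|_{L^2(Q)}\lesssim h_Q^{-1}\|\nabla(V-W)\|_{L^2(Q)}$. The only difference is that you additionally justify that inverse inequality (pullback to $\hat Q$, polynomial inverse estimate, bi-Lipschitz transport via \eqref{eq:Fbound}), which the paper simply asserts; this is a welcome addition, not a deviation.
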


\begin{proof}
  By definition, we have:
\[ r(V) - r(W)= \mathrm{\div}(\bA\nabla (V-W )) = \mathrm{\div}(\bA) \cdot \nabla(V-W) + \bA: D^2(U-W),  \]
where $D^2\cdot$ stands for the hessian matrix. 
 Using the inverse inequality $\| D^2(U-W) \|_{L^2(Q)} \les h_Q^{-1}  \| \nabla(V-W)\|_{L^2(Q)}$,  applying Cauchy-Schwarz  and triangle inequality, we obtain:
\begin{equation*}
  \begin{aligned}
    |\varepsilon_{\cal Q}(V,Q)-\varepsilon_{\cal Q}(W,Q)| & \les h_Q \| r(V) - r(W) \|_{L^2(Q)} \\
& \les (h_Q \| \mathrm{div}(\bA)\|_{L^\infty(Q)} + \| \bA\|_{L^\infty(Q)}) \| \nabla(V-W)\|_{L^2(Q)} 
  \end{aligned}
\end{equation*}
which ends the proof. 
\end{proof}

We can combine the previous results to obtain the last preparatory Lemma. See Proposition 3 in \cite{NochettoCIME}.

\begin{lma}\label{lma:esred}
Let $\cQ$ be a  strictly \B  admissible mesh, $\cM$ be a set of marked elements and $\cQ^*$ the corresponding refined mesh. i.e., $\cQ^*= \mathrm{REFINE}(\cQ,\cM)$.  There exists $\Lambda>0$ so that, $\forall V\in\mathbb{S}_D({\cal Q}), V^*\in\mathbb{S}_D^*({\cal Q}^*)$ and any $\delta>0$,
\begin{equation}\label{eq:esred}
\varepsilon^2_{{\cal Q}^*}(V^*,{\cal Q}^*)\le
(1+\delta)\left[\varepsilon^2_{\cal Q}(V,{\cal Q})
-\lambda\varepsilon^2_{\cal Q}(V,{\cal M})\right]
+(1+\delta^{-1})\Lambda^2\eta^2_{\cal Q}(A,{\cal Q})
|||V^*-V|||_{\Omega}^2
\end{equation} 
   with $\eta^2_{\cal Q^*} = \sup_{Q^*\in{\cal Q}^*}\eta^2_{\cal Q^*}(\bA,Q^*)$. \B 
\end{lma}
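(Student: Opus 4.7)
The plan is to reduce to the two ingredients already established: the Lipschitz property of the indicator in its trial argument (Lemma~\ref{lma:lip}) and the indicator reduction when one changes the mesh while keeping the argument fixed (Lemma~\ref{lma:erred}). The glue between them is Young's inequality in the form $(a+b)^2 \le (1+\delta)a^2 + (1+\delta^{-1})b^2$.

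First, I would work element-by-element on the refined mesh. For each $Q^*\in\cQ^*$, the triangle inequality gives
$\varepsilon_{\cQ^*}(V^*,Q^*) \le \varepsilon_{\cQ^*}(V,Q^*) + \bigl|\varepsilon_{\cQ^*}(V^*,Q^*)-\varepsilon_{\cQ^*}(V,Q^*)\bigr|$,
so squaring and applying Young's inequality yields
$\varepsilon^2_{\cQ^*}(V^*,Q^*) \le (1+\delta)\,\varepsilon^2_{\cQ^*}(V,Q^*) + (1+\delta^{-1})\,\bigl|\varepsilon_{\cQ^*}(V^*,Q^*)-\varepsilon_{\cQ^*}(V,Q^*)\bigr|^2$.
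Summing over $Q^*\in\cQ^*$ splits the right-hand side into two groups which I would control separately.

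For the first group, since $V\in\SS_D(\cQ)\subset\SS_D(\cQ^*)$ is unchanged, Lemma~\ref{lma:erred} applies directly with $U$ replaced by $V$ and gives
$\varepsilon^2_{\cQ^*}(V,\cQ^*) \le \varepsilon^2_\cQ(V,\cQ) - \lambda\,\varepsilon^2_\cQ(V,\cM)$,
which produces exactly the bracketed term multiplied by $(1+\delta)$. For the second group, Lemma~\ref{lma:lip} applied locally gives
$\bigl|\varepsilon_{\cQ^*}(V^*,Q^*)-\varepsilon_{\cQ^*}(V,Q^*)\bigr|^2 \le \Lambda^2\,\eta^2_{\cQ^*}(\bA,Q^*)\,\|\nabla(V^*-V)\|^2_{L^2(Q^*)}$.
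Taking the supremum of the weights $\eta^2_{\cQ^*}(\bA,Q^*)$ and summing the disjoint norm contributions over $Q^*$ bounds this group by $\Lambda^2\,\eta^2_{\cQ^*}\,\|\nabla(V^*-V)\|^2_{L^2(\Omega)}$, and then the coercivity estimate \eqref{eq:coer} converts $\|\nabla(V^*-V)\|_{L^2(\Omega)}$ into $\alpha_1^{-1/2}\,|||V^*-V|||_\Omega$, the loss being absorbed into $\Lambda$. Finally, since $\cQ^*\succeq\cQ$ every refined element satisfies $h_{Q^*}\le h_Q$ for its ancestor $Q\in\cQ$ and $\|\bA\|_{L^\infty(Q^*)},\|\div\bA\|_{L^\infty(Q^*)}$ are bounded by their values on $Q$, hence $\eta^2_{\cQ^*}\le \eta^2_\cQ(\bA,\cQ)$, which is the form appearing in the statement.

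The argument is essentially routine once the two key lemmas are in hand; the only place that requires a bit of care is the passage from the weight $\eta^2_{\cQ^*}$ on the refined mesh to the coarser weight $\eta^2_\cQ(\bA,\cQ)$ on the right-hand side of \eqref{eq:esred}, which relies on the monotonicity of both the mesh size and the $L^\infty$ norms of $\bA$ and $\div\bA$ under set inclusion. No other ingredient beyond Young's inequality, the triangle inequality, and the prior lemmas is required, and the statement then follows directly by combining the two group bounds.
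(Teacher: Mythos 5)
Your argument is correct and follows essentially the same route as the paper's proof: the elementwise split via the triangle and Young inequalities, Lemma~\ref{lma:lip} for the perturbation term, summation over $Q^*\in\cQ^*$, and Lemma~\ref{lma:erred} for the remaining term in $V$. The only differences are that you spell out two details the paper glosses over, namely the passage from $\|\nabla(V^*-V)\|_{L^2(\Omega)}$ to the energy norm via coercivity and the monotonicity $\eta^2_{\cQ^*}\le\eta^2_{\cQ}(\bA,\cQ)$, both of which are correct.
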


\begin{proof}
  Applying triangle inequality and Lemma \ref{lma:lip}, we have:
\begin{equation*}
  \begin{aligned}
\varepsilon^2_{{\cal Q}^*}(V^*,{Q}^*) &\leq (1+\delta) \varepsilon^2_{\cQ^*}(V,{Q^*}) + (1+\delta^{-1}) \, | \varepsilon_{{\cal Q}^*}(V^*,{Q}^*) - \varepsilon_{\cQ^*}(V,{Q^*}) |^2 \\
& \leq (1+\delta) \varepsilon^2_{\cQ^*}(V,{Q^*})  + \eta^2_{\cal Q^*}(A,Q^*) \Lambda \| \nabla(V-V^*)\|^2_{L^2(Q^*)}. 
 \end{aligned}
\end{equation*}
Summing over the elements, we obtain:
\[  \varepsilon^2_{{\cal Q}^*}(V^*,{\cal Q}^*) \leq (1+\delta) \varepsilon^2_{\cQ^*}(V,{\cQ^*})  + \eta^2_{\cal Q^*}  \|(V-V^*)\|^2_{\mathbb{V}}. \]
The statement follows by applying Lemma \ref{lma:erred}. 
\end{proof}

Finally we are now ready to  prove Theorem \ref{thm:contraction}.

\begin{proofof}{Theorem \ref{thm:contraction}}
By summing up the error orthogonality \eqref{eq:pyt} with the estimator reduction
\eqref{eq:esred} scaled by a constant $\gamma>0$, we obtain
\begin{align*}
|||u-U_{k+1}|||_{\Omega}^2+\gamma\,\varepsilon^2_{\cQ_{k+1}}(U_{k+1},\cQ_{k+1})
\,\le\, &
|||u-U_{k}|||_{\Omega}^2 \\
\,+\, &
\left[\gamma\,(1+\delta^{-1})\Lambda_0-1\right] |||U_{k+1}-U_k|||_{\Omega}^2\\
\,+\, &
\gamma\,(1+\delta)\left[\varepsilon^2_{\cQ_k}(U_k,\cQ_k)
-\lambda\,\varepsilon^2_{\cQ_k}(U_k,\cM_k)\right],
\end{align*}
where we have used \eqref{eq:erred} with $\cQ=\cQ_k, \cQ^*=\cQ_{k+1}, V=U_k, V^*=U_{k+1}$, and we have set $\Lambda_0=
\Lambda \eta_{\cQ_0}^2(A,\cQ_0)\ge \Lambda \eta_{\cQ_k}^2(A,\cQ_k)$.
The choice $\gamma=1/[(1+\delta^{-1})\,\Lambda_0]$ together with the D\"orfler 
marking property \eqref{eq:dm} leads to
\begin{align*}
|||u-U_{k+1}|||_{\Omega}^2+\gamma\,\varepsilon^2_{\cQ_{k+1}}(U_{k+1},\cQ_{k+1})
\,\le\, &
|||u-U_{k}|||_{\Omega}^2 \\
\,+\, &
\gamma\,(1+\delta)\left[\varepsilon^2_{\cQ_k}(U_k,\cQ_k)
-\lambda\,\theta^2\,\varepsilon^2_{\cQ_k}(U_k,\cQ_k)\right]\\
\,=\, &
|||u-U_{k}|||_{\Omega}^2 + 
\gamma\,(1+\delta)(1-\lambda\,\theta^2)\,\varepsilon^2_{\cQ_k}(U_k,\cQ_k).
\end{align*}
By choosing the parameter $\delta$ so that $(1+\delta)(1-\lambda\,\theta^2)=1-\lambda\,\theta^2/2$, the above inequality reduces to
\[
|||u-U_{k+1}|||_{\Omega}^2+\gamma\,\varepsilon^2_{\cQ_{k+1}}(U_{k+1},\cQ_{k+1})
\,\le\,
|||u-U_{k}|||_{\Omega}^2 + 
\gamma\,\left(1-\frac{\lambda\,\theta^2}{2}\right)\,\varepsilon^2_{\cQ_k}(U_k,\cQ_k).
\]
The second term on the right-hand side may be written as
\[
-\gamma\,\frac{\lambda\,\theta^2}{4}\,\varepsilon^2_{\cQ_k}(U_k,\cQ_k)
+
\gamma\,\left(1-\frac{\lambda\,\theta^2}{4}\right)\,\varepsilon^2_{\cQ_k}(U_k,\cQ_k),
\]
so that taking into account the a posteriori upper bound \eqref{eq:ub} and the associated 
constant $C_{\mathrm{up}}$, we obtain the inequality \eqref{eq:contraction}
with $\alpha=\max\left\{1-\gamma\frac{\lambda\,\theta^2}{4\,C_{\mathrm{up}}},1-\frac{\lambda\,\theta^2}{4}\right\}<1$.
\end{proofof}
\bibliographystyle{plain}
\bibliography{biblio}

%
%

\end{document}